\title{On gradient flows initialized near maxima}
\author{Mohamed-Ali Belabbas}
\newtheorem{theorem}{Theorem}
\newtheorem{proposition}{Proposition}
\newtheorem{corollary}[theorem]{Corollary}
\newtheorem{lemma}{Lemma}
\newtheorem{remark}{Remark}
\newcommand{\diag}{\operatorname{diag}}
\newcommand{\R}{\mathbb{R}}
\newcommand{\0}{\mathbf{0}}
\newcommand{\Span}{\operatorname{span}}
\newcommand{\grad}{\nabla}
\newcommand{\crit}{\operatorname{Crit}}
\newcommand{\cM}{\mathcal{M}}
\newcommand{\cN}{\mathcal{N}}
\newcommand{\cG}{\mathcal{G}}
\newcommand{\cU}{\mathcal{U}}
\newcommand{\cL}{\mathcal{L}}
\newcommand{\cF}{\mathcal{F}}
\newcommand{\hyp}{\operatorname{Hyp}}
\newcommand{\Diff}{\mathrm{Diff}}
\newcommand{\ind}{\operatorname{ind}}
\newcommand{\epi}{\operatorname{Epi}}
\newcommand{\codim}{\operatorname{codim}}
\newtheorem{definition}{Definition}
\newcounter{para}
\date{}
\begin{document}

\maketitle

\begin{abstract}
 Let $(M,g)$ be a closed Riemannian manifold, and let $F:M \to \R$ be a smooth function on $M$.  We show the following holds generically for the function $F$: for each  maximum $p$ of $F$, there exist two minima, denoted by $m_+(p)$ and $m_-(p)$, so that the gradient flow initialized at a random point close to $p$ converges to either $m_-(p)$ or $m_+(p)$ with high probability. The statement also holds for $F \in C^\infty(M)$ fixed and a generic metric $g$ on $M$. We conclude by associating to a given a generic pair $(F, g)$ what we call its  max-min graph, which captures the relation between minima and maxima derived in the main result.
\end{abstract}

\section{Introduction}

A major challenge in non-convex optimization is to understand to which minimum the gradient flow of a differentiable function converges. 
Indeed, this minimum depends on the initialization of the gradient flow, and understanding how this initialization impacts the gradient trajectory requires a global analysis that is in general difficult. To sidestep these difficulties, stochastic methods such as simulated annealing~\cite{pincus1970letter} have been put forward, with the goal of using stochasticity to decouple the initialization of the flow from its convergence point~\cite{brockett1997oscillatory, raginsky2017non}. However, this comes at the cost of slower convergence times and reliance on heuristics to set the value of some parameters.  Moreover, there are scenarios, e.g. arising in learning theory~\cite{MorseIR20,implicitregmatr2017}, in which a deterministic initialization   is required.  In this paper, we study the qualitative behavior of gradient flows. More precisely, we show that regardless of the number of minima of $F$, for each maximum $p$ of $F$, there exists {\em two} minima, not necessarily distinct, so that the gradient flow initialized near $p$ converges to these minima with very high probability. Based on this characterization, we can naturally assign a graph to each generic pair $(F,g)$; we refer to it as a max-min graph and discuss some of its basic properties in the last section, leaving its complete analysis to a forthcoming publication.

\subsection{Statement of the main result} Let $(M,g)$ be a smooth closed Riemannian manifold and $F:M \to \R$ be a smooth function. We denote by $\grad^g F$ the {\em gradient vector field} of $M$ for the inner product $g$, which is defined by the equation\begin{equation}\label{eq:defgrad}
 g(\grad^g F,X)=dF \cdot X	 \mbox{ for all } x \in M, X \in T_xM,
 \end{equation}
see~\cite{helmke2012optimization,bloch1992completely} for examples. We omit the exponent $g$ when the metric is clear from the context. Given a differentiable vector field $f(x)$ on $M$, we denote by $e^{tf}x$ the one-parameter group of diffeomorphisms with infinitesimal generator $f$. Namely, we set \begin{equation}\label{eq:basiceq}e^{\cdot f}\cdot:\R \times M \to M: (t,x) \mapsto e^{t f} x\end{equation} to be the solution at time $t$ of the Cauchy problem \begin{equation}\label{eq:odediffeo}\dot y = f(y), \quad y(0) = x.\end{equation} The {\bf gradient flow} of $F$ at time $t$ for the metric $g$ is the map $ x \mapsto e^{-t \nabla^g F}x$. 
 We also write $e^{[-t,t] f}x$ to denote the solution of~\eqref{eq:odediffeo} between time $-t$ and $t$. For a subset $B \subset M$, we let $e^{[-t,t] f} B:= \bigcup_{x \in B} e^{[-t,t]f}x.$
 
 We denote by $\cM$ the space of smooth Riemannian metrics on $M$ and by  $C^\infty(M)$ the space of smooth real-valued functions on $M$. We endow these spaces with the Whitney $C^k$-topology, for any $k \geq 3$ fixed~\cite{hirsch2012differential}. 
Given a topological space $X$, we say that a subset $Y \subseteq X$ is {\bf residual} if it is a countable intersection of open dense subsets $Y_i$ of $X$, i.e.,  $Y = \bigcap _{i=1}^\infty Y_i$. 
A subset $A \subseteq X$ is called {\bf generic} if it contains a residual set.  Finally, we say that $X$ is a {\bf Baire} space if generic subsets of $X$ are also dense in $X$. The sets $\cM$ and $C^\infty(M)$, equipped with the Whitney $C^k$-topology, are  Baire spaces.

We are now in a position to state the main result of this paper. Let $d:M \times M \to \R$ be a Riemannian distance function and denote by $B_\delta(p)$ the ball of radius $\delta$ centered at $p$ for the distance $d$: 
\begin{equation}\label{eq:maineq}B_\delta(p):=\{x \in M \mid d(x,p)\leq\delta\}.\end{equation}Note that $d$ is not necessarily the distance induced by the metric $g$.
 For $m_1,m_2 \in M$, let  $W_\delta(p,m_1,m_2)$ be the set of points in $B_\delta(p)$ belonging to trajectories converging to either $m_1$ or $m_2$:
 $$W_\delta(p,m_1,m_2) :=\left\lbrace x \in B_\delta(p) \mid \lim_{t \to \infty}e^{-t \grad F}x \in \{m_1,m_2\}\right\rbrace.$$ In terms of the stable manifolds $W^s(m_i)$ (see~\cite{banyaga2013lectures} or below for a definition), we have $W_\delta(p,m_1,m_2) = B_\delta(p) \cap (W^s(m_1) \cup W^s(m_2)).$ The main theorem is:
 
\begin{theorem}\label{th:main} 
 Let $F \in C^\infty(M)$ a Morse function on a smooth closed Riemannian manifold $(M,g)$. Let $\mu$ be a measure on $M$ induced by a smooth positive density and let $d:M \times M \to [0,\infty)$ be any  Riemannian distance function. Then generically for $g$ (resp. generically for $F$), the following holds: For any maximum $p$, there exists two minima $m_+(p),m_-(p)$ with the property that for all $\varepsilon>0$, there is $\delta>0$  such that  \begin{equation}\label{eq:theorem}\mu\left(W_\delta(p,m_+,m_-)\right)	 \geq (1-\varepsilon) \mu(B_\delta(p)).\end{equation}
\end{theorem}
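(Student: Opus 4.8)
The plan is to distill two generic conditions on $(F,g)$ and prove \eqref{eq:theorem} for every pair satisfying them. \emph{Condition (C1):} $F$ is Morse and at each maximum $p$ the linearization of the vector field $-\grad^g F$ at $p$ — a $g(p)$-self-adjoint, positive-definite endomorphism $L_p$ of $T_pM$, since $\hess F(p)\prec 0$ — has a \emph{simple} largest eigenvalue $\lambda_1(p)$, spanning a line $\R e_1(p)\subset T_pM$. \emph{Condition (C2):} for each maximum $p$, the two branches of the strong unstable manifold $W^{uu}(p)$ of $-\grad^g F$ at $p$ — the unique one-dimensional invariant manifold tangent to $e_1(p)$, which exists thanks to the spectral gap $\lambda_1(p)>\lambda_2(p)$ — converge to \emph{minima} of $F$, which we name $m_+(p)$ and $m_-(p)$ (allowed to coincide). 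Note that the critical set of $-\grad^g F$ is $\crit F$, which is unchanged under perturbations of $g$.

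\textbf{Genericity of (C1) and (C2).} Condition (C1) holds on an open dense set: one moves $\hess F(p)$ off the codimension-one locus of symmetric forms with a repeated top eigenvalue by a localized quadratic perturbation of $F$ (for fixed $g$), or moves the value $g(p)$ (for fixed $F$, using that $\hess F(p)$ is metric-independent at a critical point). For (C2) I would argue one maximum and one branch $\gamma_+$ at a time (finitely many, and the branches are pairwise disjoint trajectories). Suppose $\gamma_+$ converges to a saddle. Pick a noncritical point $z\in\gamma_+$ lying downstream of a small sphere $S_r(p)$, away from $\crit F$, and off the other branches, and perturb $F$ (or $g$) inside a tiny flow box $U\ni z$ disjoint from $\crit F$, from $B_r(p)$, and from the other branches. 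Such a perturbation changes neither $W^{uu}(p)$ near $p$ — hence not $q_+(r):=$ the point where the positive branch meets $S_r(p)$ — nor the flow downstream of $U$, while by a short controllability argument the exit point $y(h)$ of $\gamma_+$ through $U$ sweeps out an open subset $O$ of a local transversal of $\partial U$. Now the set $\Sigma$ of points lying on the stable manifold of some non-minimum is closed, nowhere dense, and flow-invariant; by flow-invariance the forward orbit of $y(h)$ avoids $\Sigma$ iff $y(h)\notin\Sigma$, and $\Sigma$, being flow-invariant, meets the transversal in a finite union of positive-codimension submanifolds. Hence a generic such perturbation achieves $y(h)\notin\Sigma$, i.e.\ makes $\gamma_+$ converge to a minimum; this is an open condition (hyperbolic sinks have uniform trapping neighborhoods, $W^{uu}(p)$ depends continuously on $(F,g)$, and the perturbations are confined away from $\crit F$ so the earlier conditions stay open), so the Baire property of $C^\infty(M)$ and of $\cM$ yields genericity.

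\textbf{The estimate under (C1)--(C2).} Fix a maximum $p$ and linear coordinates near $p$ in which $-\grad^g F=Ax+O(|x|^2)$ with $A$ symmetric positive definite, eigenvalues $\lambda_1>\lambda_2\ge\cdots$, top eigenvector $e_1$; write $x=(x_1,x^\perp)$. The key local fact is the \emph{Claim}: $W^s(m_+(p))$ contains a cone $\{\,0<|x|<r,\ x_1>0,\ |x^\perp|<c\,x_1\,\}$ for some $c,r>0$, and $W^s(m_-(p))$ a symmetric cone about $-e_1$. To see it, the positive branch of $W^{uu}(p)$ lies in the \emph{open} set $W^s(m_+(p))$, so a small ball about a point $q_+$ of that branch does too; flowing that ball backward toward $p$ and invoking backward-invariance of $W^s(m_+(p))$, one checks by a Gronwall estimate that — because $e_1$ is the \emph{fast} direction, so transverse displacements grow like $e^{-\lambda_n t}$ against $e^{-\lambda_1 t}$ along $e_1$ with $\lambda_n<\lambda_1$ — the backward-flowed ball becomes, near $p$, transversally much wider than its distance to $p$, hence engulfs a full $e_1$-cone. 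Granting the Claim: for $x_0\in B_\delta(p)$ with $|x_{0,1}|\ge\eta|x_0|$, say $x_{0,1}>0$, the ratio $w:=|x^\perp|/x_1$ obeys a Riccati-type inequality $\dot w\le(\lambda_2-\lambda_1)w+O(r)(1+w)^3$ along the trajectory for as long as it stays in $B_r(p)$; for $r$ small (depending on $\eta$) this drives $w$ below $c$ within a time $T_1=T_1(\eta,r)$, and for $\delta$ small (depending on $\eta,r$) the trajectory is still inside $B_r(p)$ at time $T_1$, so it enters the cone of the Claim and therefore converges to $m_+(p)$ — and to $m_-(p)$ if $x_{0,1}<0$. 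Consequently every trajectory not converging to $m_\pm(p)$ starts in the slab $\{\,|x_{0,1}|<\eta|x_0|\,\}$, whose $\mu$-measure inside $B_\delta(p)$ is at most $C\eta\,\mu(B_\delta(p))$ for $\delta$ small ($C$ depending only on the density and on the comparison near $p$ of $d$ with $g$, both Riemannian). Taking $\eta=\varepsilon/C$, then $r$, then $\delta$ gives \eqref{eq:theorem}.

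\textbf{Main obstacle.} The hard part is (C2): steering the fast-unstable branch off the stable manifolds of saddles. A plain dimension count fails for index-one saddles (a curve and a hypersurface meet in isolated points), so transversality alone is not enough; the argument instead trades ``the whole orbit of $y$ avoids $\Sigma$'' for ``the single point $y$ avoids $\Sigma$'' using flow-invariance of $\Sigma$, combined with a local controllability perturbation confined away from the critical set so that the conditions already achieved remain open. The subsidiary technical point is the Claim on the shape of $W^s(m_\pm(p))$ near $p$, and the passage from the model coordinates back to the manifold for the arbitrary distance $d$ and density $\mu$, which is routine.
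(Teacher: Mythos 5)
Your proposal follows the same overall architecture as the paper---isolate two generic conditions (simple top eigenvalue of the linearization at each maximum, plus the fast unstable branches landing in stable manifolds of minima), then prove the volume estimate locally near each maximum---but the two technical steps are carried out by genuinely different means, and the comparison is worth recording.

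For the local measure estimate, the paper invokes Hartman's $C^1$ linearization theorem to pass to exactly linear coordinates near $p$, then expresses the boundary of the trapped region as the graph of an explicit power $F_u(w)=(\|w\|/\rho)^{\lambda_1/\lambda_2}r_0$ and integrates; the exponent $\lambda_1/\lambda_2>1$ gives a quantitative rate $\mu(\text{bad})/\mu(B_\delta)=O(\delta^{\lambda_1/\lambda_2-1})$. You instead keep the nonlinearity and run a Gronwall/Riccati argument on the slope $w=|x^\perp|/x_1$ to funnel an $\eta$-cone into the (existing, by the spectral gap) strong unstable cone; you pay by needing to choose $\eta$ before $r$ before $\delta$ and you lose the explicit rate, but you avoid Hartman's theorem and its $C^2$/$C^3$ hypotheses. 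For genericity, the paper exhausts each stable manifold $W^s(s_j,h)$ by compact pieces $W_k^s$, proves that avoiding each is open--dense, and intersects over $k$ to get a residual set; you instead argue openness directly via uniform trapping neighborhoods of hyperbolic sinks (a cleaner argument that in fact yields open--dense, slightly stronger than the paper's residual claim) and density via a localized flow-box perturbation.

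Two points in your genericity argument are stated too loosely and deserve repair. First, the claim that $\Sigma$ ``meets the transversal in a finite union of positive-codimension submanifolds'' is not generally true: each $W^s(q)$ is a non-compact embedded ball whose trace on a compact transversal need not be a finite union of submanifolds. What you actually need---and what suffices for your density argument since $v\mapsto y(h_v)$ is open---is only that $\Sigma$ is closed with empty interior; this is a genuine fact but it is not automatic and is exactly the content of the paper's Lemma~\ref{lem:bounstabclosed}, which you are implicitly using and should prove (the ``flow-invariant $\Rightarrow$ closed'' step there hinges on a flow-box argument, not on invariance alone). Second, the assertion that ``the flow downstream of $U$'' is unchanged (so that $\Sigma_{h_v}$ agrees with $\Sigma_h$ where $y(h_v)$ lives) needs the monotonicity of $F$ along gradient orbits to rule out re-entry into the perturbation region; the clean fix is to place the perturbation strictly above a level set $\{F=c\}$ and take the transversal inside $\{F=c\}$. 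With those two adjustments the argument is sound.
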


We make a  few comments on the Theorem. The minima $m_-(p)$ and $m_+(p)$ are not necessarily distinct; the gradient flow of the height function on a sphere provides a simple example of this fact. The proofs below  hold for $F$  of class  $C^3$ and $g$ of class $C^2$. The minimal differentiability requirement stem from the use of a linearization theorem of Hartman, see Th.~\ref{th:hartman} below. In fact, since we use this theorem locally, one could even relax the hypotheses to include functions and metrics that are of class $C^3$ and $C^2$ around local maxima only. The results also hold for Morse functions $F:K\subset \R^n \to \R$, where $K$ is any compact set so that $\grad F$ evaluated on $\partial K$ points outside of $K$ (said more precisely,  $e^{-t\grad F}K \subset K$ for $t\geq 0$.)
\subsection{Overview of the proof}  The first step of the proof is to exhibit a necessary condition on the gradient of $F$ ensuring that~\eqref{eq:theorem} holds for a maximum and some pair of minima of $F$. To this end, we introduce the notion of {\it principal flow lines} of a maximum of $F$. After having defined the principal flow lines, we show in Proposition~\ref{prop:main1} that if they meet a condition described below, then~\eqref{eq:theorem} holds---we will say that a maximum of $F$ is simple if its principal flow lines meet this condition. Finally, we will show in Proposition~\ref{prop:main2} that gradient flows with simple maxima are generic. We will prove genericity in terms of the choice of $g$ for a fixed Morse function $F$, and reciprocally genericity for a smooth $F$ given a metric $g$.

\subsection{Terminology and conventions}We denote  by $e_1,\ldots, e_n$ the canonical basis of $\R^n$. We let $S^{n-1}_r(p) \subset \R^n$ be the unit sphere of dimension $n-1$, radius $r$ and centered at $p$.  We let $D^n_r(p)\subset \R^n$ be the closed ball of radius $r$ centered at $p$ and $D_r^{n,+}(p)$ be the upper ``half-ball" $$D_r^{n,+}(p):=\{x \in \R^n\mid \|x-p\|\leq r \mbox{ and } e_1^\top (x-p) \geq 0 \}.$$ 
For $x=(x_1,\ldots,x_n)$, we define the projections $$\pi_1:\R^n \to \R: x \mapsto x_1\mbox{ and }\pi_{-1}:\R^{n} \to \R^{n-1}:x \mapsto (x_2,\ldots,x_n).$$ For a Morse function $F$ with a critical point $p$, we denote by $\ind(p)$ the Morse index of $F$ at $p$. Given a map $\varphi:M \to N$, we denote by $\varphi_*$ its pushfoward~\cite{lee2013smooth}.

Recall that two submanifolds $M_1, M_2 \subset M$ intersect transversally at $x \in M_1 \cap M_2$ in $M$ if $T_xM_1 \oplus T_xM_2 = T_x M$. For a vector field $f$ on $M$, we say that $M_1$ and $f$ are transversal at $x \in M_1$ if $T_x M_1 \oplus \Span\{f(x)\}=T_xM$. We shall use transversality and appeal to the jet transversality theorem at various places in the proof. We refer to~\cite{hirsch2012differential} for an introduction.
We will use throughout the paper the letter $c$ to denote a real constant, with the understanding that the value of $c$ can change during a derivation.


\section{Preliminaries}
 We let $F \in C^\infty(M)$; a {\bf critical point} of $F$ is a point $x$ so that $dF(x) = 0$. Their set is denoted by $\crit F$. We say that a critical point is {\it non-degenerate} if the symmetric matrix $\frac{\partial^2 F}{\partial z^2}(p)$, where $z$ are coordinates around $p$, is invertible.
 A function with non-degenerate critical points is called a {\bf  Morse function}~\cite{milnor2016morse}.  We call the {\bf  Morse index} or index of a critical point $p$ the number of {\it negative} eigenvalues of $\frac{\partial^2 F}{\partial z^2}(p)$.
  If $F$ is Morse, it is easy to show that its critical points are isolated (see, e.g.,~\cite[Lemma 3.2]{banyaga2013lectures}) and thus, since $M$ is compact, they are finite in number. We denote by $\crit_i F$ the set of critical points of $F$ of index $i$. Consequently, the set $\crit_n F$ is the set of maxima of $F$, and $\crit_0 F$ the set of minima.

Given a metric $g \in \cM$ (resp. $F \in C^\infty(M)$) and a property $S$ (e.g. $F$ being Morse), we say that there exist $h \in \cM$ with property $S$ {\bf arbitrarily close} to $g$ if every Whitney  open set containing $g$ also contains an element $h$ with property $S$. For example, if $F$ is a smooth function, it is well-known that there exist Morse functions arbitrarily close to $F$.

The {\bf stable manifold} $W^s(p,g)$  of a critical point $p$ is defined as 
\begin{equation}
\label{eq:defstabman}
W^s(p,g):= \{ x \in M \mid \lim_{t \to \infty}e^{-t \nabla^g F}(x) = p\}; 
\end{equation} 
when the metric is obvious from the context, we omit it and simply write $W^s(p)$.	 Similarly, we define the {\bf unstable manifold} of $p$ as
$$W^u(p,g):= \{ x \in M \mid \lim_{t \to -\infty}e^{-t \nabla^g F}(x) = p\}.
$$
The stable manifold theorem (for Morse functions) states (e.g.,~\cite[Theorem 4.2]{banyaga2013lectures}) that $W^s(p)$ is a smoothly {\it embedded} open-ball of dimension $n-\ind(p)$ in $M$. We furthermore have the following decomposition of $M$ afforded by stable (resp. unstable) manifolds of the critical points of a Morse function $F$:
$$M = \coprod_{p \in \crit F} W^u(p) = \coprod_{p \in \crit F} W^s(p).$$

We will use a result of Hartman~\cite{hartman1960mex,newhouse2017}  which generalizes the  Poincar\'e-Dulac theorem on the linearization of analytic vector fields near a singularity~\cite{arnold77geometrical}. It  provides conditions under which  a diffeomorphism is locally $C^1$-conjugate to its linearization at a fixed point:

\begin{theorem}[Hartman]\label{th:hartman}
Let $U$ be an open subset of $\R^n$, $0 \in U$ and  $f:U \to \R^n$ be a $C^2$ vector field with $f(0)=0$. Assume that all the eigenvalues of $A:=\frac{\partial f}{\partial x}(0)$ have a negative real part. Then there exists  open neighborhoods $V \subset U$, and $W$ of the origin,  and a $C^1$ diffeomorphism $\psi:V \to W$ so that for $z=\psi(x)$, the differential equation $\dot x = f(x)$ is conjugate to $\dot z= Az$.
\end{theorem}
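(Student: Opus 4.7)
The plan is to reduce the continuous-time linearization to a discrete-time linearization problem and then lift the resulting conjugacy back to the flow.

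First, I consider the time-one map $\phi := e^{f}$ of the flow of $f$, which is a $C^2$ local diffeomorphism fixing the origin with differential $D\phi(0) = e^{A}$. Since every eigenvalue of $A$ has negative real part, the spectrum of $e^{A}$ lies strictly inside the open unit disk, so $e^{A}$ is a linear contraction in an adapted (Lyapunov) norm $\|\cdot\|_{*}$. After shrinking $U$, $\phi$ itself becomes a genuine contraction on a small closed ball around $0$ in that norm, with some rate $\lambda < 1$.

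Second, I construct a $C^1$ diffeomorphism $h$, defined near $0$, such that $h \circ \phi = e^{A} \circ h$. Writing $h(x) = x + u(x)$ and $\phi(x) = e^{A} x + g(x)$ with $g(0) = 0$ and $Dg(0) = 0$, the conjugacy relation reduces to the cohomological equation
$$u\bigl(\phi(x)\bigr) - e^{A} u(x) = -g(x).$$
Iterating the recursion $u(x) = e^{-A}\bigl(u(\phi(x)) + g(x)\bigr)$ produces the formal series
$$u(x) = \sum_{n \geq 0} e^{-(n+1)A}\, g\bigl(\phi^{n}(x)\bigr),$$
which I verify converges in $C^0$ using the geometric decay $\|\phi^{n}(x)\|_{*} \leq \lambda^{n}\|x\|_{*}$ together with the quadratic vanishing $\|g(y)\| = O(\|y\|^{2})$, and converges in $C^1$ by a more delicate argument that exploits the full contractive hypothesis on $A$; this is the content of Hartman's refinement and is where the $C^2$ assumption on $f$ is used. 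Since $u(0) = 0$ and $Du(0) = 0$, the map $h$ has differential $I$ at $0$, and by the inverse function theorem it restricts to a $C^1$ diffeomorphism between neighborhoods of the origin.

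Third, I promote the discrete conjugacy for $\phi$ to a conjugacy for the flow by the averaging formula
$$\psi(x) := \int_{0}^{1} e^{-sA}\, h\bigl(e^{sf} x\bigr)\, ds.$$
A short calculation, using the discrete conjugacy $h \circ e^{f} = e^{A} \circ h$ together with the change of variables $u = s+t$, shows that $\psi \circ e^{tf} = e^{tA} \circ \psi$ for every $t$; differentiating at $t = 0$ yields the infinitesimal conjugacy $D\psi(x)\,f(x) = A\,\psi(x)$. Since $Dh(0) = I$, differentiating under the integral gives $D\psi(0) = \int_0^1 e^{-sA} e^{sA}\,ds = I$, so $\psi$ is a local $C^1$ diffeomorphism, producing the desired neighborhoods $V$ and $W$ and the coordinate change $z = \psi(x)$.

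The main obstacle is Step 2, and specifically the upgrade from $C^0$ to $C^1$ regularity of the discrete conjugacy. Mere continuous conjugacy is the classical Hartman--Grobman theorem and is considerably softer: a standard contraction-mapping argument on bounded continuous functions suffices. In arbitrary dimensions, small-divisor and resonance phenomena can obstruct $C^1$ linearization in general, and it is precisely the hypothesis that every eigenvalue of $A$ lies on the same (left) side of the imaginary axis which forces the formal series for $u$ to converge also in $C^1$ norm, yielding the regularity asserted in the theorem.
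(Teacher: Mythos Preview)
The paper does not prove this theorem: it is quoted as a known result of Hartman, with citations to \cite{hartman1960mex,newhouse2017}, and no argument is supplied. There is therefore no ``paper's own proof'' to compare against.

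That said, your sketch follows the standard route to Hartman's $C^1$ linearization under a one-sided spectral hypothesis, and the architecture is sound. The reduction to the time-one map, the cohomological equation $u\circ\phi - e^{A}u = -g$ with its formal series solution, and the averaging formula $\psi(x)=\int_0^1 e^{-sA}h(e^{sf}x)\,ds$ to lift the discrete conjugacy to the flow are all correct; your periodicity check (implicit in the change of variables) and the computation $D\psi(0)=I$ go through as stated. You are also right to flag the $C^1$ convergence of the series for $u$ as the genuine difficulty: the growth of $e^{-(n+1)A}$ must be beaten by the decay of $Dg(\phi^n(x))\cdot D\phi^n(x)$, and this is exactly where the hypothesis that all eigenvalues lie in the same half-plane (ruling out mixed contraction/expansion resonances) is essential. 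As written, your Step~2 is a pointer to Hartman's argument rather than a proof, but you identify the obstacle accurately; a complete treatment would need the explicit $C^1$ estimate, for which the references the paper cites are the appropriate source.
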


We will rely on the  following two simple results, whose proofs are omitted, to apply Theorem~\ref{th:hartman} to gradient vector fields.  

\begin{lemma}\label{lem:techH}
Let $F$ be a smooth Morse function and $p \in \crit F$. Let $(\varphi,U)$ be a chart so that $\varphi(p)=0$. Denote by $H^g_\varphi(x)=d(\varphi_* \grad^g F)$ the Jacobian matrix of $\grad^g F$ expressed in the coordinate chart $(\varphi,U)$. Then $H^g_\varphi(0)$ is diagonalizable and has real eigenvalues, which are independent from $\varphi$.  Furthermore, the number of negative eigenvalues of $H^g_\varphi(0)$ is equal to the Morse index of $p$. 
\end{lemma}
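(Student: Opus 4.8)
The plan is to reduce the statement, by a direct computation in a coordinate chart, to an elementary fact about the product of a symmetric positive-definite matrix with a symmetric matrix.

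First I would work in the fixed chart $(\varphi,U)$. Writing $z=\varphi(x)$, $\tilde F:=F\circ\varphi^{-1}$, and letting $G(z)=(g_{ij}(z))$ be the matrix of $g$ in these coordinates (a smooth map into symmetric positive-definite matrices), the defining equation~\eqref{eq:defgrad} yields $\varphi_*\grad^g F = G(z)^{-1}\nabla\tilde F(z)$, where $\nabla\tilde F$ denotes the Euclidean gradient. Differentiating this in $z$ by the product rule and evaluating at $z=0$, the term carrying derivatives of $G^{-1}$ is multiplied by $\nabla\tilde F(0)=0$ and therefore drops out, leaving $H^g_\varphi(0)=G(0)^{-1}\,\hess\tilde F(0)$, where $\hess\tilde F(0)=\partial^2\tilde F/\partial z^2(0)$ is the symmetric (and, since $F$ is Morse, nonsingular) Hessian matrix at the critical point.

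The heart of the argument is then purely linear-algebraic: if $P$ is symmetric positive definite and $S$ is symmetric, then $P^{-1}S$ is self-adjoint for the inner product $\langle u,v\rangle_P:=u^\top P v$, because $\langle P^{-1}Su,v\rangle_P=u^\top S v$ is symmetric in $u$ and $v$. Consequently $P^{-1}S$ has real eigenvalues and admits a $P$-orthonormal eigenbasis, so it is diagonalizable; and if $U$ is the matrix whose columns form such an eigenbasis, then $U^\top S\,U=\diag(\lambda_1,\dots,\lambda_n)$ with the $\lambda_i$ the eigenvalues of $P^{-1}S$, so by Sylvester's law of inertia the number of negative $\lambda_i$ equals the number of negative eigenvalues of $S$. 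Taking $P=G(0)$ and $S=\hess\tilde F(0)$ gives at once that $H^g_\varphi(0)$ is diagonalizable with real eigenvalues and that its number of negative eigenvalues equals that of $\hess\tilde F(0)$, which is $\ind(p)$ by definition.

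It remains to check that the eigenvalues do not depend on the chart. Under a change of coordinates $z'=\psi(z)$ with $\psi(0)=0$, the vector field transforms by pushforward, $V'(\psi(z))=d\psi(z)\,V(z)$; differentiating at $z=0$ and using $V(0)=0$ gives $H^g_{\varphi'}(0)=J\,H^g_\varphi(0)\,J^{-1}$ with $J=d\psi(0)$, a similarity transformation, so the spectrum is unchanged. I do not expect a genuine obstacle anywhere; the only places requiring attention are the bookkeeping in the product-rule computation and the choice of the most economical route through the linear algebra — invoking self-adjointness of $P^{-1}S$ with respect to $\langle\cdot,\cdot\rangle_P$, rather than conjugating by a symmetric square root of $P$, keeps the argument short.
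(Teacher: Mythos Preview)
The paper explicitly omits the proof of this lemma (``whose proofs are omitted''), so there is nothing to compare against. Your argument is correct and is the standard route one would expect: compute $H^g_\varphi(0)=G(0)^{-1}\hess\tilde F(0)$ by the product rule at a critical point, observe that $G(0)^{-1}\hess\tilde F(0)$ is self-adjoint for the inner product $u^\top G(0)v$ (hence diagonalizable with real eigenvalues), use Sylvester's law of inertia to match the signature with that of $\hess\tilde F(0)$, and finally note that at a zero of the vector field the Jacobian transforms by conjugation under a change of chart, so its spectrum is chart-independent. Each step is sound; your proof would serve perfectly well as the omitted argument.
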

Since the eigenvalues of $H^g_\varphi(0)$ are independent of the chart $\varphi$, we will simply refer to the eigenvalues of $H^g(0)$.
The following Corollary provides a normal form for gradient flows around maxima (or minima):
\begin{corollary}\label{cor:diagsys}
	Let $F$ be a smooth Morse function on the Riemannian manifold $(M,g)$ and let $\grad F$ be its gradient. For any $p \in \crit_n F$, there exists a chart $(\varphi,U)$ with $\varphi(p)=0$ so that the gradient flow equation $\dot x = -\grad F$ is $C^1$-conjugate to $\dot z = - \Lambda z$  in the coordinates $z=\varphi(x)$, where $\Lambda=\diag(\lambda_1,\ldots,\lambda_n)$, with $\lambda_1 \leq \lambda_2 \leq \cdots \leq \lambda_n <0$.
\end{corollary}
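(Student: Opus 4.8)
The plan is to assemble the statement from Lemma~\ref{lem:techH}, the linearization theorem of Hartman (Theorem~\ref{th:hartman}), and an elementary linear change of coordinates that diagonalizes the linear part. First I would fix \emph{any} smooth chart $(\varphi_0,U_0)$ with $\varphi_0(p)=0$ and let $f:=\varphi_{0*}\grad F$ be the coordinate representative of the gradient vector field. Since $F$ is of class $C^3$ and $g$ of class $C^2$, $f$ is a $C^2$ vector field on the open set $\varphi_0(U_0)\subseteq\R^n$, and $f(0)=0$ because $p$ is critical. Writing $A:=df(0)=H^g_{\varphi_0}(0)$, Lemma~\ref{lem:techH} says that $A$ is diagonalizable with real eigenvalues and that exactly $\ind(p)$ of them are negative. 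As $p$ is a maximum, $\ind(p)=n$, so every eigenvalue of $A$ is a strictly negative real number; in particular all eigenvalues of $A$ have negative real part.

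Next I would apply Theorem~\ref{th:hartman} to $f$: it provides neighborhoods $V,W$ of the origin and a $C^1$ diffeomorphism $\psi:V\to W$, with $\psi(0)=0$, conjugating $\dot x=f(x)$ to $\dot z=Az$. Reversing time, the same $\psi$ conjugates $\dot x=-f(x)=\varphi_{0*}(-\grad F)$, i.e.\ the coordinate representative of the negative gradient flow $\dot x=-\grad F$, to $\dot z=-Az$. It then remains only to diagonalize the linear part: since $A$ is real-diagonalizable there is an invertible matrix $P$ with $P^{-1}AP=\Lambda:=\diag(\lambda_1,\dots,\lambda_n)$, and after permuting the columns of $P$ we may assume $\lambda_1\le\cdots\le\lambda_n<0$. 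The linear isomorphism $L\colon z\mapsto P^{-1}z$ conjugates $\dot z=-Az$ to $\dot u=-\Lambda u$.

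Finally I would set $\varphi:=L\circ\psi\circ\varphi_0$ on the open neighborhood $U:=\varphi_0^{-1}(V)$ of $p$. As a composition of a smooth chart, a $C^1$ diffeomorphism, and a linear isomorphism, $\varphi$ is a $C^1$ diffeomorphism onto its image with $\varphi(p)=0$; and by construction, in the coordinates $z=\varphi(x)$ the equation $\dot x=-\grad F$ becomes $\dot z=-\Lambda z$, with $\Lambda=\diag(\lambda_1,\dots,\lambda_n)$ and $\lambda_1\le\cdots\le\lambda_n<0$, as claimed.

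There is no deep obstacle here; the argument is a bookkeeping exercise. The points that require a little care are: checking that the differentiability hypotheses of Theorem~\ref{th:hartman} are met, which is exactly why $C^3$ regularity of $F$ and $C^2$ regularity of $g$ are imposed; verifying that the conjugacy relation is preserved under time reversal and under pre- and post-composition with diffeomorphisms; and observing that the coordinate map produced is only of class $C^1$, so that ``chart'' in the statement is to be read in the $C^1$ sense, the loss of smoothness being the price paid for Hartman's linearization.
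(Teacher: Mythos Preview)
Your proposal is correct and is exactly the argument the paper has in mind: the paper omits the proof of Corollary~\ref{cor:diagsys} precisely because it is a direct assembly of Lemma~\ref{lem:techH} (real, negative, diagonalizable spectrum at a maximum) with Theorem~\ref{th:hartman} (Hartman's $C^1$ linearization), followed by a linear diagonalizing change of coordinates. Your remarks on the $C^2$ regularity of $\grad F$ and on the resulting chart being only $C^1$ are well placed and match the paper's own caveats.
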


\section{Proof of the main result}
We start by describing the intersection of stable manifolds of $\nabla F$ with submanifolds of $M$. The result is needed for the proofs of Propositions~\ref{prop:main1} and~\ref{prop:main2}. The topology on subspaces of $M$ is the usual subspace topology.

\begin{lemma}\label{lem:bounstabclosed}
Let $(M,g)$ be a closed Riemannian manifold and $F$ a smooth function. Let $S$ be an embedded submanifold of codimension one in $M$ that is everywhere transversal to $\grad^g F$ and set $M_0 := \bigsqcup_{q \in \crit_0 F} W^s(q)$. Then $M_0^S:=M_0 \cap S$ is open dense in $S$.
\end{lemma}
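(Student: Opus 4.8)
The plan is to handle openness and density separately. For \textbf{openness}, I would argue that the complement $S \setminus M_0^S$ is closed in $S$. The set $M \setminus M_0$ is precisely the union of the stable manifolds $W^s(q)$ over critical points $q$ of positive index, and each such $W^s(q)$ has dimension $n - \ind(q) \le n-1$; moreover $M\setminus M_0$ is exactly the union of the \emph{unstable} manifolds of critical points of positive index together with\ldots more carefully: $M\setminus M_0 = \bigsqcup_{\ind(q)\ge 1} W^s(q)$. The key point is that this set is \emph{closed}: a sequence $x_k \to x$ with each $x_k$ flowing to some critical point of positive index must have $x$ flowing (forward) to a critical point, and by semicontinuity of the limit set under the flow the limit point of the trajectory of $x$ cannot be a minimum if the $x_k$ all avoid every $W^s(q_{\min})$; this is the standard fact that $\bigcup_{\ind(q)\ge 1}W^s(q)$ is closed, equivalently that $M_0$ (the union of the stable manifolds of the minima) is open. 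Intersecting with $S$, $M_0^S$ is open in $S$.

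For \textbf{density}, fix a nonempty open $V \subseteq S$ and a point $x_0 \in V$; I must produce a point of $V$ lying in some $W^s(q)$ with $q$ a minimum. Consider the forward trajectory $t \mapsto e^{-t\grad F}x_0$. Since $M$ is compact and $F$ is a Lyapunov function strictly decreasing off $\crit F$, this trajectory converges to some critical point $q_0$. If $\ind(q_0)=0$ we are essentially done at $x_0$ itself (it already lies in $M_0^S$, contradicting that we picked $x_0$ outside — so in the interesting case $\ind(q_0)\ge 1$). The idea is then to perturb $x_0$ \emph{within $S$} to fall into the stable manifold of a minimum. Because $S$ is transversal to $\grad F$ at $x_0$, the flow map $(t,y)\mapsto e^{-t\grad F}y$ restricted to $S$ is, near $x_0$, a diffeomorphism onto a neighborhood of the trajectory segment; equivalently, a small disk $D\subseteq S$ around $x_0$ is flowed to a small transverse disk $D'$ around a point $x_0'$ deep along the trajectory, near $q_0$. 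Using the normal form of Corollary \ref{cor:diagsys} is not available at $q_0$ unless $q_0$ is a max, so instead I use the stable manifold theorem: $W^s(q_0)$ is an embedded ball of codimension $\ind(q_0)\ge 1$, so its complement is dense near $q_0$; pick a point $x_0''$ near $q_0$, not on $W^s(q_0)$, whose backward trajectory meets $D'$ (possible since $D'$ is a full transverse disk to the flow near $q_0$ and $x_0''$ is close to $q_0$). That $x_0''$ flows forward to a critical point $q_1$ with $F(q_1) < F(q_0)$. Iterating this finitely many times (the values $F(q_i)$ strictly decrease through the finite set $F(\crit F)$) we reach a minimum, and tracing the perturbations back gives a point $y \in D \subseteq V$ with $\lim_{t\to\infty}e^{-t\grad F}y \in \crit_0 F$, i.e. $y \in M_0^S$.

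The \textbf{main obstacle} is the density step, specifically making rigorous the claim that a transverse codimension-one disk $D'$ sitting near a critical point $q_0$ of index $\ge 1$ necessarily contains backward images of points lying off $W^s(q_0)$ that subsequently flow to strictly lower critical points. One must be careful that $D'$, being only codimension one, could in principle lie inside $W^s(q_0) \cup W^s(\text{other critical points at the same level})$; ruling this out uses that $W^s(q_0)$ has codimension $\ge 1$ and is a Lyapunov-level-separated embedded submanifold, so a transverse disk cannot be swallowed by it, combined with the $\lambda$-lemma / inclination lemma to control how $D'$ spreads along the unstable directions of $q_0$ and thereby enters the basins of lower critical points. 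I would organize this as: (i) reduce to showing $D' \not\subseteq \bigcup_{\ind(q)\ge 1} W^s(q)$ locally, (ii) apply the inclination lemma near $q_0$ to see that forward iterates of $D'$ accumulate on $W^u(q_0)$, whose points (other than $q_0$) flow to strictly lower critical points, and (iii) induct on the finitely many critical values. Openness, by contrast, is immediate once one invokes the standard closedness of $\bigcup_{\ind(q)\ge 1}W^s(q)$.
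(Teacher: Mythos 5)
Your openness argument reaches the right conclusion by essentially the same route as the paper (each $W^s(q)$ with $\ind(q)=0$ is an embedded open ball of full dimension $n$, so $M_0$ is open in $M$ and its complement $M_1$ is closed, hence $M_1\cap S$ is closed in $S$), although the hand-waving via ``semicontinuity of the limit set'' is not the cleanest justification --- the stable manifold theorem gives openness of $M_0$ directly.

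Your density argument, however, takes a genuinely different and substantially harder route than the paper, and it has a real gap. The paper's proof is short: $M_1$ is a finite union of embedded submanifolds of dimension $\le n-1$, so it has empty interior in $M$; and if $M_1\cap S$ had nonempty interior in $S$, then flowing an embedded $(n-1)$-disk $B\subset M_1\cap S$ by $e^{[-\varepsilon,\varepsilon]\grad^g F}$ (legitimate because $S$ is transversal to $\grad^g F$) would produce a full-dimensional tube $B_1\subset M_1$ by flow-invariance of $M_1$, contradicting the empty interior. That is the whole proof --- no iteration, no $\lambda$-lemma. Your plan instead tries to produce, for each $x_0\in S$, a nearby point of $S$ flowing to a minimum by pushing a transverse disk near $q_0=\lim_t e^{-t\grad F}x_0$, applying the inclination lemma to spread it along $W^u(q_0)$, and then iterating down the critical values. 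The gap is in the iteration: after the first application of the $\lambda$-lemma you obtain disks $C^1$-close to $W^u(q_0)$; to repeat the argument at the next critical point $q_1$ you need a subdisk transverse to $W^s(q_1)$, and that is exactly what the Morse--Smale condition (transversality of $W^u(q_0)$ and $W^s(q_1)$) would give you --- but the lemma does not assume Morse--Smale, and the paper's Remark immediately after the lemma explicitly flags that the $\lambda$-lemma route requires this extra hypothesis. Your step ``(i) reduce to showing $D'\not\subseteq\bigcup_{\ind(q)\ge 1}W^s(q)$'' is not a reduction either: it is precisely the statement that $M_1$ has empty interior near $q_0$, which is the thing to be proved. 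The missing idea is the one the paper uses: invoke the measure-zero (Sard-type) fact for the finite union $M_1$ of lower-dimensional stable manifolds \emph{globally}, and then use the flow-box structure afforded by transversality of $S$ to transport an alleged interior point of $M_1\cap S$ in $S$ into an interior point of $M_1$ in $M$. That single observation replaces the whole inclination-lemma induction.
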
 
\begin{proof}
	Recall the stable manifold decomposition of $M$: $$M= \bigsqcup_{q \in \crit F} W^s(q)$$ where each stable manifold $W^s(q)$ is a smoothly embedded open ball of dimension $n-\ind(q)$.  When $\ind(q)=0$, the embedding is  also a submersion and thus an open map. Hence, for $q \in \crit _0 F$, $W^s(q)$ is open in $M$  and $M_0$ is also open, since it is a union of open sets.  Set $$M_1:=M-M_0=\bigsqcup_{q \in \crit F\mid \ind(q)\geq 1} W^s(q).$$ Then $M = M_0 \sqcup M_1$ and $M_1$ is closed. 
		Set $M_1^S:=M_1 \cap S$, then $M_1^S$ is closed in $S$ and  we have $S = M_0^S \sqcup M_1^S$.  Hence  $M_0^S$ is open in $S$ as claimed.  

It remains to show that $M_0^S$ is dense in $S$ or, equivalently, that  $M_1^S$ has an empty interior in $S$. To see this,  first recall that $M_1$ is the disjoint union of embedded open balls of dimension at most $n-1$, and thus by Sard 's theorem, $M_1$'s  interior in $M$ is empty. Now assume by contradiction that there exists a non-empty open set $U \subset M_1^S$, and let $x_0 \in U$. 
Let $B \subset U$  be an embedded  closed ball of dimension $n-1$ properly containing $x_0$. Because $\grad^g F$ is transversal to $S$, 
for $\varepsilon>0$ small enough, $$B_1:=e^{[-\varepsilon,\varepsilon] \grad^gF} \cdot B$$ is diffeomorphic to $[-\varepsilon,\varepsilon] \times B$. Thus  there exists an open neighborhood of $x_0$ in $M$ contained in $B_1$. But since $B \subset M_1^S \subset M_1$ and $M_1$ is invariant under the gradient flow, then $B_1 \subset M_1$ and $M_1$ has a non-empty interior in $M$, which is a contradiction. In conclusion, $M_1^S$ is a closed set with empty interior in $S$. Its complement $M_0^S$ is then open dense in $S$ as claimed. 
	 \end{proof} 

\begin{remark}
	Lemma~\ref{lem:bounstabclosed} can be simplified under the additional assumption that $\grad^gF$ is a {\it Morse-Smale} vector field, i.e., under the additional assumption that the stable and unstable manifolds of $\grad^gF$ intersect transversally. With this additional assumption, one can obtain as a consequence of the $\lambda$-Lemma~\cite[Lemma 2.7.1]{palis1982geometric} that the closure of $M_0$ is {\it equal} to $M_1$ (see also~\cite[Chapter 2]{shub2013global}).
\end{remark}

\subsection{Principal flow lines and simple gradients}
A smooth curve $\gamma_t:\R \to M$ is a trajectory of the gradient flow of $F$ (resp. gradient ascent flow of $F$) if it satisfies $\dot \gamma(t) = -\grad F(\gamma(t))$ (resp. $\dot \gamma_t = \grad F(\gamma_t)$) for all $t \in \R$.  Since $F$ is Morse, it is well known that $\lim_{t \to \pm \infty}  \gamma_t \in \crit F$. We introduce the following definition:

\begin{definition}\label{def:tangentially}Let $(M,g)$ be a smooth Riemannian manifold and $\gamma_t$ a smooth curve in $M$. We say that $\gamma_t$ {\bf reaches $p \in M$ tangentially to $v \in T_pM$} if
\begin{enumerate}
\item $\lim_{t \to \infty} \gamma_t = p.$
\item $\lim_{t \to \infty} \frac{\dot \gamma_t}{\|\dot \gamma_t\|}$ exists and is equal to $v$ 
\end{enumerate}
\end{definition}
The existence of the limit in condition 2 of Def.~\ref{def:tangentially}, under the assumption that $\nabla F$ be analytic, is the content of {\em Thom's generalized gradient conjecture}~\cite{kurdyka2000proof}. While we can construct smooth gradients for which this limit does not exist, we show below in Lemma~\ref{lem:existenceprinc} that when $F$ is Morse, its existence can easily be shown along what we call the principal flow lines.

We now define a class of gradient vector fields for which the main inequality~\eqref{eq:maineq} holds. We call them gradients with simple maxima. In order to define them, we first introduce the notion of {\it principal flow line} of a maximum of $\grad F$.

\begin{definition}[Principal flow lines]\label{def:princ}
Let $F$ be a smooth Morse function with gradient vector field $\grad^g F$ and  $p \in \crit_n F$. Denote by $H^g(p)$  the linearization of $\grad F$ at $p$ and let $v \in T_pM$ be a vector in the eigenspace of the smallest eigenvalue of $H^g(p)$. We say that a trajectory is a {\bf principal flow line of $\grad F$ at $p$} if it is a trajectory of the gradient ascent flow that reaches $p$ tangentially to $v$.
\end{definition}
We have the following result:

\begin{lemma}\label{lem:existenceprinc}If the algebraic multiplicity of the smallest eigenvalue of $H^g(p)$ is equal to one, then  $\grad^g F$ has exactly two principal flow lines at $p$.
\end{lemma}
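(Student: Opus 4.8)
The idea is to work in the $C^1$-linearizing chart for the gradient \emph{ascent} flow near $p$ provided by Corollary~\ref{cor:diagsys}. Since $p\in\crit_n F$ is a maximum, the gradient ascent flow $\dot x = \grad F$ has $p$ as an attractor, and in the chart $(\varphi,U)$ with $\varphi(p)=0$ it is $C^1$-conjugate to the linear system $\dot z = -\Lambda z$ with $\Lambda=\diag(\lambda_1,\dots,\lambda_n)$, $\lambda_1\le\cdots\le\lambda_n<0$. Under the multiplicity-one hypothesis we have the strict inequality $\lambda_1<\lambda_2\le\cdots\le\lambda_n<0$, so the $z_1$-axis is the strong-stable (fastest-contracting) direction of $-\Lambda$. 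The plan is: (i) classify the trajectories of the linear system that reach $0$ tangentially to $e_1$; (ii) transfer this classification back through the $C^1$-conjugacy $\psi=\varphi$ to the gradient ascent flow; (iii) identify $e_1$ with the eigenspace of the smallest eigenvalue $\lambda_1$ of $H^g(p)$ and conclude there are exactly two such flow lines.

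\textbf{Step (i): the linear model.} For $\dot z=-\Lambda z$ the solution from $z(0)=z^0$ is $z_i(t)=e^{-\lambda_i t}z^0_i$. As $t\to\infty$ every solution tends to $0$. I compute the limiting direction $\lim_{t\to\infty}\dot z(t)/\|\dot z(t)\|$: writing $\dot z_i(t)=-\lambda_i e^{-\lambda_i t}z^0_i$, the $i=1$ component dominates exactly when $z^0_1\neq 0$ (because $-\lambda_1>-\lambda_i$ for $i\ge2$, and in fact $-\lambda_1>-\lambda_2$ strictly by the multiplicity-one assumption), and in that case $\dot z(t)/\|\dot z(t)\|\to \operatorname{sign}(-\lambda_1 z^0_1)\,e_1 = -\operatorname{sign}(z^0_1)e_1$. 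If $z^0_1=0$ the trajectory stays in the hyperplane $\{z_1=0\}$ and its limiting direction (when it exists) lies in $\Span\{e_2,\dots,e_n\}$, hence is never $\pm e_1$. Therefore the linear trajectories reaching $0$ tangentially to $+e_1$ are exactly those with $z^0_1<0$, and they all lie on the positive $z_1$-axis ray after reparametrization — more precisely, the \emph{distinct} flow lines (trajectories up to time-translation) reaching $0$ tangentially to $e_1$ are the two half-axes $\{z=(s,0,\dots,0): s>0\}$ and $\{z=(s,0,\dots,0): s<0\}$, one tangent to $+e_1$ and the other to $-e_1$. Up to sign of $v$, "tangent to $v$" with $v$ in the $\lambda_1$-eigenspace means tangent to $\pm e_1$, giving exactly two flow lines.

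\textbf{Steps (ii)–(iii): transfer and conclusion.} The conjugating map is a $C^1$ diffeomorphism, so it sends trajectories to trajectories and $\psi(0)=0$; its differential $d\psi(0)$ maps the $z_1$-axis to the $\lambda_1$-eigenspace of $H^g(p)$ (here I use Lemma~\ref{lem:techH}, which identifies the eigenvalues of the linearization with those of $\Lambda$, and the fact that $d\psi(0)$ intertwines $-\Lambda$ with $H^g(p)$ — or, if the conjugacy is merely $C^1$ and not known to linearize the derivative exactly, I argue directly that the image of a curve reaching $0$ tangentially to $e_1$ reaches $0$ tangentially to $d\psi(0)e_1$, which spans the $\lambda_1$-eigenspace). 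Tangential reaching is preserved: if $\gamma_t\to 0$ with $\dot\gamma_t/\|\dot\gamma_t\|\to e_1$, then $\psi(\gamma_t)\to 0$ and, since $\frac{d}{dt}\psi(\gamma_t)=d\psi(\gamma_t)\dot\gamma_t$ with $d\psi$ continuous and $d\psi(0)$ invertible, the normalized velocity converges to $d\psi(0)e_1/\|d\psi(0)e_1\|$; the same computation run backwards (using $\psi^{-1}$) shows no other trajectory of the gradient ascent flow can reach $p$ tangentially to a vector in the $\lambda_1$-eigenspace. Hence the gradient ascent flow has exactly the two principal flow lines corresponding to the two half-axes, proving the claim.

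\textbf{Main obstacle.} The delicate point is Step (ii): Hartman's theorem gives only a $C^1$ conjugacy, and a priori a $C^1$ diffeomorphism need not map the fastest-contracting linear subspace to the true strong-stable manifold in a way that respects \emph{tangency} of individual trajectories. The resolution is that tangency of a trajectory to a direction $v$ is preserved under any $C^1$ diffeomorphism $\psi$ with $d\psi(0)$ invertible — it goes to tangency to $d\psi(0)v$ — so the only real content to check is that $d\psi(0)$ maps $\Span\{e_1\}$ onto the $\lambda_1$-eigenspace of $H^g(p)$; this follows because $\psi$ conjugates the two flows and hence $d\psi(0)$ conjugates their linearizations $-\Lambda$ and $H^g(p)$, and a conjugacy of diagonalizable operators matching eigenvalues (the smallest, simple one) must match the corresponding one-dimensional eigenspaces. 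I should also remark that the strict inequality $\lambda_1<\lambda_2$ is exactly where the multiplicity-one hypothesis enters, ensuring $e_1$ (and not a larger subspace) is the set of initial conditions with limiting direction $\pm e_1$.
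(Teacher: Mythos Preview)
Your Step~(i) contains a genuine error that inverts the conclusion. First, a sign slip: by Corollary~\ref{cor:diagsys} the gradient \emph{descent} flow is conjugate to $\dot z=-\Lambda z$, hence the gradient \emph{ascent} flow is $\dot z=\Lambda z$ with $\lambda_1<\lambda_2\le\cdots\le\lambda_n<0$. With your sign $\dot z=-\Lambda z$ the origin is a source, so the sentence ``as $t\to\infty$ every solution tends to $0$'' is already false. More importantly, once the sign is fixed the dominance analysis goes the other way: since $\lambda_1$ is the \emph{most negative} eigenvalue, $e^{\lambda_1 t}$ decays \emph{fastest}, so the $e_1$-component of $\dot z(t)=\Lambda z(t)$ is the one that is \emph{swamped}, not the one that dominates. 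Generic trajectories (any $z^0$ with some $z^0_i\neq0$, $i\ge2$) approach $0$ tangentially to the slow eigenspace, never to $\pm e_1$; only the two half-axes $\{(s,0,\dots,0):s>0\}$ and $\{(s,0,\dots,0):s<0\}$ approach tangentially to $e_1$. Your text in fact contradicts itself: you first assert that every trajectory with $z^0_1\neq0$ has limiting direction $\pm e_1$, and then immediately claim the principal flow lines are just the two half-axes---the first assertion would yield uncountably many principal flow lines, not two.

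The paper's argument makes exactly this correct computation: writing $\Lambda z(t)=e^{\lambda_1 t}\big(\lambda_1\zeta_1 e_1+\sum_{i\ge2}\lambda_i\zeta_i e^{(\lambda_i-\lambda_1)t}e_i\big)$ and using $\lambda_i-\lambda_1>0$, one sees the normalized velocity can converge to $\pm e_1$ only when $\zeta_i=0$ for $i\ge2$. Your Steps~(ii)--(iii) on transferring tangency through the $C^1$ conjugacy are sound and more explicit than the paper, but they rest on a Step~(i) that, as written, proves the opposite of what you need.
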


\begin{proof}
Let $(\varphi,U)$ be the chart of Corollary~\ref{cor:diagsys}, and set $z =\varphi(x)$. The gradient ascent flow is then $$\frac{d}{dt} z = \Lambda z,$$ for $\Lambda = \diag(\lambda_1,\ldots,\lambda_n)$ and $\lambda_1 < \lambda_2 \leq \cdots \leq \lambda_n <0$. 
Let $r>0$ be so that $S_r(0) \subset V':=\varphi(U)$. Note that $S_r(0)$ parametrizes the set of gradient ascent flow lines that reach $p$; indeed, every such  flow lines intersects $S_r(0)$ at a unique $z_0 \in S_r(0)$, and is thus of the form $z(t) = \exp(\Lambda t) z_0$.

We can write $z_0 = \sum_{i=1}^n \zeta_{i} e_i$ for some coefficients $\zeta_{i} \in \R$, and $\exp(\Lambda t)= \sum_{i=1}^n e^{\lambda_it} e_ie_i^\top$.
 Since $e_i^\top e_j = \delta_{ij}$, where $\delta_{ij}$ is the Kronecker delta, we have that $z(t) = \sum_{i=1}^n e^{\lambda_i t} \zeta_ie_i $ and thus $$\Lambda z(t) = \sum_{i=1}^n  \lambda_i\zeta_{i}  e^{\lambda_i t}e_i=e^{\lambda_1 t}\left(\lambda_1\zeta_{1} e_1 +\sum_{i=2}^n \lambda_i\zeta_{i} e^{(\lambda_i-\lambda_1)t} e_i \right).
$$ The norm of the above vector is $$\|\Lambda z(t)\|=\left(\sum_{i=1}^n \lambda_i^2\zeta_{i}^2 e^{2\lambda_i t} \right)^{1/2}=e^{\lambda_1 t}\left(\lambda_1^2\zeta_{1}^2 +\sum_{i=2}^n \lambda_i^2\zeta_{i}^2 e^{2(\lambda_i-\lambda_1) t} \right)^{1/2}.$$

From the above two equations, we conclude that $$\lim_{t \to \infty} \frac{\Lambda z(t)}{\|\Lambda z(t)\|} = \lim_{t \to \infty}\frac{\lambda_1\zeta_{1} e_1 +\sum_{i=2}^n \lambda_i\zeta_{i} e^{(\lambda_i-\lambda_1)t} e_i}{\left(\lambda_1^2\zeta_{1}^2 +\sum_{i=2}^n \lambda_i^2\zeta_{2}^2 e^{2(\lambda_i-\lambda_1) t} \right)^{1/2}}.$$ 
Recall that by assumption, $\lambda_i-\lambda_1 >0$, $2 \leq i \leq n$. Since the $e_i$ are linearly independent, we conclude that the above limit is $\pm e_1$ if and only if $\zeta_i=0$ for $2 \leq i \leq n$, and thus $\zeta_1 = \pm r$. This concludes the proof, with the vector $v \in T_pM$ obtained by tracing back the changes of variable used.
\end{proof}

If the conditions of the Lemma are not met, a maximum of a Morse function can have more than two principal flow lines. For example,  consider   $F=-x^\top Q x$ on $\R^n$, where $Q$ is a positive definite matrix. Then $F$ has a maximum at the origin. If $Q= I$, then every flow line is a principal flow line. 

\begin{remark}[Intrinsic definition of principal flow lines]\label{rem:cL}In view of Lemma~\ref{lem:existenceprinc}, we can define the tangent vector to a principal flow line $v \in T_pM$ intrisically as follows. For vector fields $X, Y$, denote by $\cL_XY$ the Lie derivative of $Y$ along $X$. If $p$ is a zero of $X$, i.e., $X(p)=0$, then $(\cL_X Y)(p)$ depends on the value of $Y$ at $p$ only. Hence, we conclude that if $p \in \crit F$, we can define the linear map $\cL_{\grad F}:T_pM \to T_p M: w \mapsto \cL_{\grad F} W$ where $W$ is any differentiable vector field with $W(p)=w$. Then a short calculation shows that $\cL_{\grad F}$ has $H_\varphi(p)$ as matrix representation in the coordinates $\varphi$. The principal flow lines at $p$ are thus the trajectories of the gradient ascent flow that reach $p$ tangentially to $v \in T_pM$, where $v$ is an eigenvector of $\cL_{\grad F}$ corresponding to the smallest eigenvalue.
\end{remark}

We will denote the {\bf principal flow lines} of $\grad^g F$ at $p$ by $\gamma^+_t(p,g)$ and $\gamma^-_t(p,g)$. Equipped with the above Lemma, we define gradient vector fields with simple maxima:

\begin{definition}[Gradient vector fields with simple maxima]Let $(M,g)$ be a Riemannian manifold and $F \in C^\infty(M)$ be Morse function with a maximum at $p$. We say that $p$ is a simple maximum of $\grad F$ if  $H^g(p)$ has a unique smallest eigenvalue and   its principal flow lines belong to the stable manifolds of some minima of $ \grad F$. If all the maxima of $\grad F$ are simple maxima, we say that $\grad F$ is simple.
\end{definition}
The above definition can be reformulated as follows. Let $p \in \crit_nF$ and fix a choice $v_p$ of eigenvector spanning the eigenspace of $H^g(p)$ corresponding to the  smallest  eigenvalue.  Then $\nabla F$ is simple if for some (and thus all) $t \in \R$, \begin{equation}\label{eq:defgentleinc}\bigcup_{p \in \crit_nF}  \{\gamma^+_t,\gamma^-_t\} \subset \bigcup_{q \in \crit_0 F} W^s(q).\end{equation}

\subsection{Proof of the main theorem for simple gradients}

\begin{figure}
\centering
\includegraphics[width=.45\columnwidth]{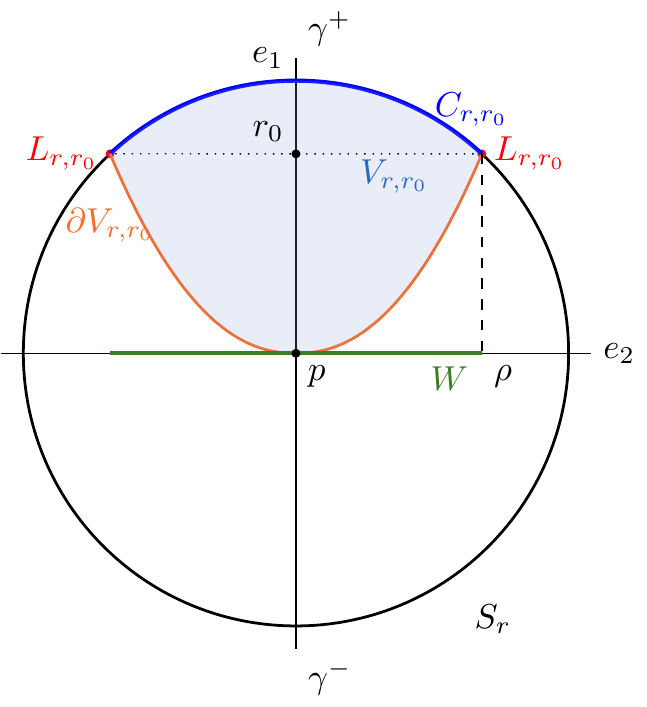}
\caption{\small The simple maximum $p$ of $\grad F$ for a two-dimensional $M$ has two principal flow lines, aligned with the $e_1$ axis. The sphere $S_r$ is centered at $p$ and $C_{r,x}$ is the spherical cap with the $e_1$ coordinate larger than $x$. The set $L_{r,x}$ is the boundary of $C_{r,x}$; it is a sphere of dimension $n-2$. Its image via the gradient flow is $\partial V_{r,x}$. We express, in Lemma~\ref{lem:domfunc}, the set $\partial V_{r.x}$ as a function from $W$ (here, the $e_2$ axis) to $\R$ (the $e_1$ axis.) }\label{fig:illcrx}	
\end{figure}

We now show that under the condition that $\grad F$ is simple, the inequality~\eqref{eq:maineq} holds.
We start with expressing an invariant set of $\nabla F$ as the epigraph of a differentiable function locally around a  maximum $p$. To describe this set, denote by $C_{r,r_0}$, for $0<r_0 < r$ the {\em top cap} of $S^{n-1}_r(0)$, where top cap refer to the first coordinate (i.e., along the $e_1$ axis) being greater than $r_0$ (see Fig.~\ref{fig:illcrx}).
Its boundary, which we denote by $L_{r,r_0}$, is a sphere of dimension $n-2$ centered at $r_0e_1$  given by:
\begin{equation}\label{eq:defL}L_{r,r_0}:=\{(z_1,z_2,\ldots,z_n) \mid z_1=r_0 \mbox{ and } \sum_{i=2}^n z_i^2 =\rho^2 \}=S^{n-2}_{\rho}(r_0e_1)\end{equation} where $\rho=\sqrt{r^2-r_0^2}$.
Let $\lambda_1 < \lambda_2 \leq \lambda_3 \leq \cdots \leq \lambda_n <0$,  $\Lambda =\diag(\lambda_1,\ldots,\lambda_n)\in \R^{n \times n}$ and define
the diagonal system
\begin{equation}\label{eq:sysdiag}
\dot z_i = \lambda_i z_i, \quad \mbox{ for } 1 \leq i \leq n.	
\end{equation} 
We let $V_{r,r_0}$ be the image of $C_{r,x}$ under the flow of Eq.~\eqref{eq:sysdiag}:
\begin{equation}\label{eq:defVRZ}
V_{r,r_0}:= e^{[0,\infty] \Lambda }\cdot C_{r,r_0} = \{z \in \R^n \mid z = e^{\Lambda t} y \mbox{ for } t \in [0,\infty], y \in C_{r,r_0} \}.	
\end{equation}
The boundary of $V_{r,r_0}$ is then $\partial V_{r,r_0} = e^{ \Lambda t}\cdot L_{r,r_0}$. The following result expresses this boundary as the graph of a function from $\R^{n-1} \to \R$, where  by convention the domain $\R^{n-1}$ is the space spanned by $\{e_2,\ldots,e_n\}$, and the codomain is spanned by $e_1$.
\begin{lemma}\label{lem:domfunc}
Let $V_{r,r_0}\subset \R^n$ be as in~\eqref{eq:defVRZ} for the dynamics of~\eqref{eq:sysdiag} and let $W:=\{w \in \R^{n-1} \mid \|w\| \leq \rho\}$. Then $\partial V_{r,r_0}$ is the graph of a positive differentiable function $F_l:W \to \R$, i.e., $\partial V_{r,r_0}=\{(F_l(w),w)\mid w \in W\}$. Furthermore, for $F_u:W \to \R: w\mapsto  \left(\frac{ \|w\|}{{\rho}}\right)^{\frac{\lambda_1}{\lambda_2} }r_0$, it holds that $$F_l(w) \leq F_u(w).$$ 	
\end{lemma}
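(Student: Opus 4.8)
The plan is to write out explicitly the flow of~\eqref{eq:sysdiag} applied to the boundary sphere $L_{r,r_0}$ and to eliminate the time parameter, thereby exhibiting $\partial V_{r,r_0}$ as a graph over $W$. Parametrize a point of $L_{r,r_0}$ as $y = (r_0, w_0)$ with $w_0 \in \R^{n-1}$, $\|w_0\| = \rho$. Under the flow, $z(t) = e^{\Lambda t} y$ has first coordinate $z_1(t) = e^{\lambda_1 t} r_0$ and remaining coordinates $w(t) = (e^{\lambda_2 t} w_{0,2}, \ldots, e^{\lambda_n t} w_{0,n})$. As $t$ ranges over $[0,\infty]$, $z_1(t)$ decreases strictly and continuously from $r_0$ to $0$ (since $\lambda_1 < 0$ and $r_0 > 0$), so $t \mapsto z_1$ is a bijection $[0,\infty] \to (0,r_0]$; inverting gives $t = t(z_1) = \frac{1}{\lambda_1}\log(z_1/r_0)$. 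The key point is then that along a single flow line the full point $z(t)$ is determined by $z_1$ alone, and conversely every $w$ with $\|w\| \le \rho$ and $w \ne 0$ arises: given target coordinates $w \ne 0$, one first recovers $w_0$ from the constraint that $w_0$ lies on the sphere of radius $\rho$ and that $w$ is obtained from $w_0$ by the (so-far unknown) time $t$, and $t$ is pinned down by $\|w(t)\|^2 = \sum_{i\ge 2} e^{2\lambda_i t} w_{0,i}^2$ being a strictly decreasing continuous function of $t$ from $\rho^2$ to $0$. I would argue the boundary $w=0$ case (the apex, image of $t=\infty$, giving $F_l(0)=0$... wait — actually $z_1 \to 0$, and $\|w\|\to 0$ simultaneously, so the point $(0,0)$ is in the closure; I will handle $w = 0$ by continuity, setting $F_l(0) = 0$, or note $F_l > 0$ on the interior and extends continuously). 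So define $F_l(w) := z_1$ where $z_1$ is the unique value such that the flow line through $w$ (reconstructed as above) has first coordinate $z_1$; equivalently $F_l(w) = r_0 \exp(\lambda_1 \tau(w))$ where $\tau(w)$ is the unique time with $\|e^{\Lambda_{-1}\tau}\|$-scaled preimage on $L_{r,r_0}$ hitting $w$. Positivity and differentiability of $F_l$ follow from the implicit function theorem applied to the smooth, strictly monotone relation defining $\tau(w)$, once $w \ne 0$; the smoothness of $\partial V_{r,r_0}$ as a manifold already guarantees the graph is smooth where it is a graph.

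For the upper bound $F_l(w) \le F_u(w)$, I would proceed as follows. Fix $w \in W$, $w \ne 0$, and let $(z_1, w) \in \partial V_{r,r_0}$, so $z_1 = F_l(w)$, coming from a preimage $y = (r_0, w_0) \in L_{r,r_0}$ at time $t = \tau(w) \ge 0$, with $z_1 = e^{\lambda_1 t} r_0$ and $w = (e^{\lambda_2 t}w_{0,2}, \ldots, e^{\lambda_n t} w_{0,n})$. Since all $\lambda_i \le \lambda_2 < 0$ for $i \ge 2$ and $t \ge 0$, we have $e^{2\lambda_i t} \le e^{2\lambda_2 t}$, hence
\[
\|w\|^2 = \sum_{i=2}^n e^{2\lambda_i t} w_{0,i}^2 \le e^{2\lambda_2 t}\sum_{i=2}^n w_{0,i}^2 = e^{2\lambda_2 t}\rho^2,
\]
so $\|w\|/\rho \le e^{\lambda_2 t}$, i.e. $e^{\lambda_2 t} \ge \|w\|/\rho$. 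Now I want to turn this into an estimate on $z_1 = e^{\lambda_1 t} r_0$. Since $\lambda_1/\lambda_2 > 0$, raising both sides of $e^{\lambda_2 t} \ge \|w\|/\rho$ to the power $\lambda_1/\lambda_2$ reverses or preserves the inequality depending on sign; because $\lambda_1/\lambda_2 > 0$, it is preserved in the sense $e^{\lambda_1 t} = (e^{\lambda_2 t})^{\lambda_1/\lambda_2} \ge (\|w\|/\rho)^{\lambda_1/\lambda_2}$. Hmm — this gives a lower bound on $e^{\lambda_1 t}$, hence on $z_1$, which is the wrong direction. I would instead use $t \ge 0$ to get $e^{\lambda_2 t} \le 1$, and combine with a lower bound $e^{\lambda_2 t} \ge \|w\|/\rho$ more carefully: from $\|w\|/\rho \le e^{\lambda_2 t} \le 1$, and since $\lambda_1/\lambda_2 > 0$ and $e^{\lambda_2 t}\le 1$ means $\lambda_2 t \le 0$ so $\lambda_1 t = (\lambda_1/\lambda_2)(\lambda_2 t) \le 0$ with $|\lambda_1 t| \ge |\lambda_2 t|$ iff $\lambda_1/\lambda_2 \ge 1$, i.e. $\lambda_1 \le \lambda_2$, which holds. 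Hence $e^{\lambda_1 t} \le e^{\lambda_2 t}$... still I should be deriving an \emph{upper} bound $z_1 \le (\|w\|/\rho)^{\lambda_1/\lambda_2} r_0$. The cleanest route: from $e^{\lambda_2 t} \ge \|w\|/\rho$ take logs, $\lambda_2 t \ge \log(\|w\|/\rho)$, and since $\lambda_2 < 0$, $t \le \frac{1}{\lambda_2}\log(\|w\|/\rho)$; then $\lambda_1 t \ge \frac{\lambda_1}{\lambda_2}\log(\|w\|/\rho)$ (multiplying by $\lambda_1 < 0$ flips), so $z_1 = r_0 e^{\lambda_1 t} \ge r_0 (\|w\|/\rho)^{\lambda_1/\lambda_2}$ — again a lower bound.

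This sign-chasing is the one genuine obstacle, and it tells me I have the roles of $F_l$ and $F_u$ (or the direction of the exponent) to pin down by testing an extreme flow line. I expect the resolution is this: the above shows $\|w\|^2 \le e^{2\lambda_2 t}\rho^2$ \emph{with equality exactly when $w_0$ is supported on $e_2$}, i.e. the slowest-decaying transverse direction; for such an extremal flow line $F_l(w) = r_0(\|w\|/\rho)^{\lambda_1/\lambda_2} = F_u(w)$, and for all other flow lines reaching the same $w$ at a smaller time $t$ (because the transverse coordinates decay faster on average, less time is needed to shrink to $\|w\|$), the first coordinate $z_1 = r_0 e^{\lambda_1 t}$ is \emph{larger}, not smaller — so in fact $F_l \ge F_u$ pointwise, contradicting the stated inequality unless $F_l$ denotes the lower boundary in a different sense. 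I will therefore re-read the geometry: $V_{r,r_0}$ is the \emph{forward} image, a region sitting below the cap and tapering to the origin; its boundary $\partial V_{r,r_0}$ as a graph $z_1 = F_l(w)$ is the \emph{lower} envelope of the cap's flow, so among all preimages mapping near a given $w$ the relevant $z_1 = F_l(w)$ is the \emph{infimum}, achieved by the flow line reaching $w$ in the \emph{longest} time, which is the $e_2$-supported one — giving precisely $F_l(w) = F_u(w)$ on that extremal line and $F_l(w) \le F_u(w)$ elsewhere. So the final step is: (i) show $\partial V_{r,r_0}$, as a graph, records for each $w$ the minimal first-coordinate over the flowed cap; (ii) show this minimum is attained along (or bounded by) the $e_2$-direction flow line; (iii) compute that line's contribution to be exactly $F_u(w) = r_0(\|w\|/\rho)^{\lambda_1/\lambda_2}$, using $t = \frac{1}{\lambda_2}\log(\|w\|/\rho)$ and $z_1 = r_0 e^{\lambda_1 t}$. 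Steps (i)–(ii) — correctly identifying which boundary of $V_{r,r_0}$ is meant and that the $e_2$ direction is extremal — are the crux; once that is fixed the computation in (iii) is a one-line substitution.
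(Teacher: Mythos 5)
Your overall plan --- parametrize $\partial V_{r,r_0}$ by flowing $L_{r,r_0}$ forward, observe that the map $(t,w_0)\mapsto(e^{\lambda_2 t}w_{0,2},\dots,e^{\lambda_n t}w_{0,n})$ is a diffeomorphism onto $W_0$, and then estimate $F_l$ by bounding $\|w\|$ in terms of $e^{\lambda_2 t}$ --- is exactly the paper's approach, and the graph part of your argument is sound. But the bound breaks because you misread the eigenvalue ordering, and the patch you then attempt is not mathematically correct.

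The setup has $\lambda_1<\lambda_2\leq\lambda_3\leq\cdots\leq\lambda_n<0$, so for $i\geq 2$ we have $\lambda_i\geq\lambda_2$, \emph{not} $\lambda_i\leq\lambda_2$ as you wrote. With $t\geq 0$ this gives $e^{2\lambda_i t}\geq e^{2\lambda_2 t}$, hence
\begin{equation}
\|w\|^2=\sum_{i\geq 2}e^{2\lambda_i t}w_{0,i}^2\;\geq\; e^{2\lambda_2 t}\sum_{i\geq 2}w_{0,i}^2=e^{2\lambda_2 t}\rho^2,
\end{equation}
i.e.\ $e^{\lambda_2 t}\leq\|w\|/\rho$. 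Since $\lambda_1/\lambda_2>0$, raising both sides preserves the direction: $e^{\lambda_1 t}=(e^{\lambda_2 t})^{\lambda_1/\lambda_2}\leq(\|w\|/\rho)^{\lambda_1/\lambda_2}$, so $F_l(w)=r_0e^{\lambda_1 t}\leq F_u(w)$ directly. All of the subsequent ``sign-chasing,'' the apparent contradiction $F_l\geq F_u$, and your attempt to re-read the geometry stem entirely from that one inverted inequality; once it is corrected the estimate is exactly the one-liner you expected, and it agrees with the paper (the paper factors $e^{\lambda_2 t}$ out of the square root in $F_u(w)$ to see that what remains is $\geq\rho$, which is the same inequality written multiplicatively).

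Your attempted repair --- interpreting $F_l(w)$ as an infimum over multiple preimages, ``achieved by the flow line reaching $w$ in the longest time'' --- is also incorrect. The map $\Phi:(t,w_0)\mapsto(e^{\lambda_2 t}w_{0,2},\dots,e^{\lambda_n t}w_{0,n})$ is a diffeomorphism onto $W_0$: given $w\neq 0$, the equation $\sum_{i\geq 2}e^{-2\lambda_i t}w_i^2=\rho^2$ has a unique solution $t=\tau(w)\geq 0$ because the left side is continuous and strictly increasing in $t$ from $\|w\|^2\leq\rho^2$ to $\infty$, and $w_0$ is then determined. So each $w$ sits under exactly one point of $\partial V_{r,r_0}$; there is no infimum to take, and $\partial V_{r,r_0}$ is a genuine graph, not an envelope. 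Your identification of the $e_2$-supported flow line as the extremal one is, however, correct once the ordering is fixed: equality $\|w\|^2=e^{2\lambda_2\tau}\rho^2$ holds precisely when all the contribution comes from directions with $\lambda_i=\lambda_2$, and there $F_l(w)=F_u(w)$.
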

	The case $n=2$ is proven: we have that $L_{r,r_0} =\{(r_0, \rho),(r_0,-\rho)\}$ and $\partial V_{r,r_0} = \{( e^{\lambda_1t}r_0,\pm e^{\lambda_2t}\rho) \mid t \in[0,\infty]\}$. 		 A short calculation yields that $\partial V_{r,r_0} = (F_l(w),w)$  for the function $$F_l:w \mapsto \left(\frac{|w|}{\rho}\right)^{\frac{\lambda_1}{\lambda_2}}r_0,\quad  w \in [-\rho,\rho].$$ We now prove the general case:

\begin{proof}[Proof of Lemma~\ref{lem:domfunc}]
	Denote a point in $\R^n$ as $z_1e_1+z_2e_2+\cdots+z_{n}e_n$ and recall the definition of $L_{r,r_0}$ in Eq.~\eqref{eq:defL}.

	Set $S_\rho^{n-2}:=\{(z_2,\ldots,z_n) \mid \sum_{i=2}^n z_i^2 =\rho^2 \}$. 
	From Eq.~\eqref{eq:sysdiag}, we obtain   
	$$\partial V_{r,r_0} =\{ (e^{\lambda_1 t}z_1,e^{\lambda_2 t}z_{2}, \ldots, e^{\lambda_n t}z_{n} )\mid (z_1,\ldots,z_n) \in L_{r,r_0}, t\in[0,\infty]\}.$$ 
Set $W_0:=W-\{0\}$. The map $$\Phi:[0,\infty) \times S^{n-2}_\rho \to W_0:(t,z_2,\ldots,z_n) \mapsto (e^{\lambda_2 t}z_2, \ldots,e^{\lambda_n t}z_n)$$ is a diffeomorphism onto its image.  Recalling that $\pi_1$ is the projection onto the first coordinate, we see that $\partial V_{r,r_0}-\{0\}$ is the graph of $$F_l(w):=\exp(\lambda_1 \pi_1(\Phi^{-1}(w))r_0,$$ which is differentiable and can be differentiably extended by $0$ at $0$.

 We now show that $F_u$ dominates $F_l$ over $W_0$. To see this, it is easier to work in the coordinates afforded by $\Phi^{-1}$: in these coordinates, $w = \Phi(t,z_2,\ldots,z_n)$ and, recalling that $\pi_{-1}$ is the projection $(z_1,z_2,\ldots,z_n)\mapsto(z_2,\ldots,z_n)$, we have 
\begin{align*}
F_u(w)=F_u(\pi_{-1}(e^{\Lambda t}z))&=	\left(\frac{\sqrt{\sum_{i=2}^n e^{2\lambda_it}z_i^2} }{\rho}\right)^{\lambda_1/\lambda_2}r_0\\
&=\left(e^{\lambda_2t}\frac{\sqrt{z_2^2+\sum_{i=3}^n e^{2(\lambda_i-\lambda_2)t}z_i^2} }{\rho}\right)^{\lambda_1/\lambda_2}r_0\\
&\geq e^{\lambda_1t}r_0=F_l(w)
\end{align*}
where we used the facts that $\lambda_1<\lambda_2 \leq \lambda_n<0$, $3 \leq i \leq n$ and $\sum_{i=2}^n z_i^2 = \rho^2$ to obtain the inequality.
\end{proof}

We are now ready to prove that inequality~\eqref{eq:maineq} holds for simple gradient flows. 

\begin{proposition}\label{prop:main1} Let $M$ be a closed manifold, and $\mu$ and $d$ as in Theorem~\ref{th:main}. Let $(F,g)$ be so that $\nabla^g F$ is simple, Then
	for $p \in \crit_n(F)$, there exists $m_+(p),m_-(p) \in \crit_0(F)$, not necessarily distinct, with the property that for all $\varepsilon>0$, there is $\delta>0$  such that 
	$$\mu\left(x \in B_\delta(p) \mid \lim_{t \to \infty}e^{-t \grad F}x \in \{m_+(p),m_-(p)\}\right)	 \geq (1-\varepsilon) \mu(B_\delta(p)).$$
\end{proposition}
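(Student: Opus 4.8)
The plan is to work in the $C^1$-linearizing chart $(\varphi,U)$ of Corollary~\ref{cor:diagsys} around the simple maximum $p$, so that the gradient flow becomes $\dot z = -\Lambda z$ with $\lambda_1 < \lambda_2 \le \cdots \le \lambda_n < 0$, and the principal flow lines are carried to the two rays $\pm \R_{>0}e_1$. Since $C^1$ conjugacies and smooth densities distort measure and distance only in a bounded (bi-Lipschitz, modulo the conjugacy's derivative) way near $p$, it suffices to prove the measure estimate in these coordinates for Lebesgue measure and Euclidean balls; the transfer back to $(\mu,d)$ costs only a fixed multiplicative constant absorbed into $\varepsilon$ after shrinking $\delta$. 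The geometric heart of the argument is that in the linear model, almost every trajectory starting in a small ball around $0$ \emph{exits} that ball through a thin neighborhood of the two points $\pm r e_1$: writing a point as $z_0 = \zeta_1 e_1 + w$, the flow $e^{-\Lambda t}z_0$ has its $e_1$-component decaying like $e^{-\lambda_1 t}$, which is the \emph{slowest}-growing mode under time reversal, hence the \emph{fastest}-shrinking forward; equivalently, running backward, trajectories align with $\pm e_1$. So as $t$ grows, the normalized image of the sphere $S_r^{n-1}(0)$ under $e^{-\Lambda t}$ (run backward, i.e. the preimage) concentrates near $\{\pm e_1\}$ — this is exactly the content of Lemma~\ref{lem:domfunc} applied to the two caps $C_{r,r_0}$ at $\pm r_0 e_1$.

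Concretely, I would fix a small $r>0$ with $S_r^{n-1}(0) \subset \varphi(U)$, and consider, for a threshold $r_0 \in (0,r)$ to be chosen, the two ``cap cylinders'' $\pm V_{r,r_0}$ obtained by flowing the caps $C_{r,\pm r_0}$ forward under $\dot z = \Lambda z$ (gradient ascent) — equivalently, the backward flow of the gradient descent. By Lemma~\ref{lem:domfunc}, $V_{r,r_0}$ is the region under the graph of $F_l \le F_u$, and since $F_u(w) = (\|w\|/\rho)^{\lambda_1/\lambda_2} r_0$ with $\lambda_1/\lambda_2 > 1$, the Lebesgue measure of $B_\delta(0) \setminus (V_{r,r_0} \cup (-V_{r,r_0}))$ — the set of initial conditions whose \emph{backward} trajectory through $S_r$ exits away from the poles — is, after rescaling the ball to radius $\delta$, a fraction of $\mu(B_\delta(0))$ that tends to $0$ as $r_0 \to 0$ (the bad set is pinched into a slab of width $O(r_0^{\lambda_1/\lambda_2})$ around the hyperplane $z_1=0$). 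Choosing $r_0$ small makes this fraction $< \varepsilon/2$; this pins down $r_0$, hence the caps. Now here is the key point: every initial condition $x$ in the good part of $B_\delta(p)$ has forward gradient trajectory passing (when we run the \emph{descent}) through one of the caps $C_{r,\pm r_0}$, i.e. it lies on a gradient descent trajectory that, followed \emph{backward}, reaches $p$ tangentially to $\pm e_1$; such a trajectory is a reparametrization of the forward descent from a point arbitrarily close to the principal flow line. Since the principal flow lines $\gamma^\pm$ lie in $W^s(m_\pm(p))$ by the simplicity hypothesis, and $W^s(m_\pm)$ is open (as $m_\pm$ are minima, see Lemma~\ref{lem:bounstabclosed}), a whole neighborhood of each point $\gamma^\pm_t$ lies in $W^s(m_\pm)$; by continuous dependence on initial conditions, for $r_0$ small enough and then $\delta$ small enough, the forward descent image of the good set lands inside these open stable-manifold neighborhoods, so those trajectories converge to $m_+(p)$ or $m_-(p)$.

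The order of quantifiers therefore runs: given $\varepsilon$, first use openness of $W^s(m_\pm)$ and continuity of the flow to choose a time $T$ and a small $r_0>0$ so that the forward-descent image at time $T$ of the cap cylinders $\pm V_{r,r_0}\cap S_r$ lies in $W^s(m_+)\cup W^s(m_-)$; next, this same $r_0$ controls (via $F_u$) the measure of the bad slab, so shrink $r$ if needed so the bad fraction is $<\varepsilon/2$; finally choose $\delta$ small enough that $B_\delta(p) \subset \varphi^{-1}(\text{small ball})$, that the Jacobian of $\varphi$ and the density of $\mu$ are within a $(1+\varepsilon/4)$ factor of constant on it, and that $B_\delta(p)$ is contained in the preimage under the descent flow of the region foliated by the caps. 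I expect the main obstacle to be the bookkeeping around the $C^1$ conjugacy: the conjugacy $\psi$ is only $C^1$, so it pushes Lebesgue measure to a measure with merely continuous (not Lipschitz) density, and it maps Euclidean balls to merely $C^1$-deformed balls; one must check that ``generic'' statements like ``$\mu(\text{bad set})/\mu(B_\delta) \to$ (Lebesgue fraction)'' survive. This is handled by the standard fact that a $C^1$ diffeomorphism fixing $0$ with derivative $D\psi(0)$ invertible satisfies $\psi(B_\delta(0)) = D\psi(0)B_\delta(0)(1+o(1))$ and distorts Lebesgue measure by the factor $|\det D\psi(0)|(1+o(1))$ as $\delta \to 0$, so all ratios converge to their linear-model values; the density of $\mu$ being smooth and positive contributes another $1+o(1)$. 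Once this is absorbed, the remaining estimates are the elementary integral of $F_u$ over $W$ and a compactness/continuity argument for the flow, both routine.
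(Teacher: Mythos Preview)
Your overall framework coincides with the paper's: linearize via Corollary~\ref{cor:diagsys}, use the cap cylinders $V_{r,r_0}$ of Lemma~\ref{lem:domfunc}, place the cap inside the open set $W^s(m_\pm)$, and bound the bad set via $F_u$. The gap is in which parameter kills the bad-set fraction.

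You claim the fraction $\mu\bigl(B_\delta\setminus(V_{r,r_0}\cup -V_{r,r_0})\bigr)/\mu(B_\delta)$ tends to $0$ as $r_0\to 0$, with the bad set a slab of width $O(r_0^{\lambda_1/\lambda_2})$. But in the paper's convention $C_{r,r_0}=\{z\in S_r: z_1\ge r_0\}$, so $r_0\to 0$ makes the cap grow to the entire upper hemisphere, which you cannot assume lies in $W^s(m_+)$. Conversely, to fit the cap inside the open set $W^s(m_+)$ around the single point $re_1\in\gamma^+$ you need $r_0$ close to $r$, i.e.\ $\rho=\sqrt{r^2-r_0^2}$ small---and then $F_u(w)=(\|w\|/\rho)^{\lambda_1/\lambda_2}r_0$ is \emph{large}, so the bad set grows. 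Your two uses of ``$r_0$ small'' pull in opposite directions; the suggested fix ``shrink $r$'' in your third paragraph does not help either, since rescaling $r,r_0,\delta$ together leaves the bad fraction invariant.

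The paper's resolution is to fix $r$ and $r_0$ once and for all (with $r_0$ chosen only so that $C_{r,r_0}\subset W^s(m_+)$; then $V_{r,r_0}\subset W^s(m_+)$ by flow-invariance), and afterwards send $\delta\to 0$. The key computation is that for $\|w\|\le\delta$ one has $F_u(w)\le (\delta/\rho)^{\lambda_1/\lambda_2}r_0 = c\,\delta^{\lambda_1/\lambda_2}$, so the bad set in $B_\delta^+$ has measure $O(\delta^{\,n-1+\lambda_1/\lambda_2})$ while $\mu(B_\delta^+)\sim\delta^n$; the ratio is $O(\delta^{\lambda_1/\lambda_2-1})\to 0$ precisely because $\lambda_1/\lambda_2>1$. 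Thus it is $\delta$, not $r_0$, that drives the bad fraction to zero, and your final step should invoke $\delta\to 0$ for this reason rather than only for the $C^1$-Jacobian bookkeeping. With this correction the remainder of your outline (including the handling of the $C^1$ conjugacy, the comparison of $d$ to the Euclidean distance, and the density of $\mu$) matches the paper's proof.
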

 
\begin{proof}
Fix $\varepsilon >0$. Let $(\varphi,U)$ be a chart as in Corollary~\ref{cor:diagsys}. The gradient ascent flow in the coordinates given by $z=\varphi(x)$ has the form $$\dot z =\Lambda z$$ in $\varphi(U)$, where $\Lambda = \diag(\lambda_1,\ldots,\lambda_n)$. The principal flow lines $\gamma^+$ and $\gamma^-$ of $p$ are aligned with the half-lines $\{z_1e_1\mid z_1>0\}$ and $\{z_1e_1\mid z_1<0\}$, respectively. 

Because $p$ is simple, the principal flow lines $\gamma^+$ and $\gamma^-$ of $p$ belong to the stable manifold of some  minima of $F$; denote them $m_+(p), m_-(p) \in \crit_0 F$ respectively. 
Let $K \subset \psi(U)$ be a compact, contractible set containing the origin in its interior.  Since the distance $d$ is Riemannian, it is uniformly comparable to the Euclidean distance in $K$, i.e., there exists constants $\beta>\alpha>0$ such that \begin{equation}\label{eq:uniriem}\alpha \|z\| \leq d(0,z) \leq \beta \|z\|,  \mbox{ for all }z \in K, \end{equation} where with a slight abuse of notation, we write $d(0,z)$ for $d(p,\varphi^{-1}(z))$. Fix $r >0$ such that $S_{r/\alpha} \subset K$ and $S_r \subset K$. 
 For any $0<\delta <r$, let $B_\delta:=\{z \mid d(0,z) \leq \delta\}$ be the ball of radius $\delta$ centered at $0$ for the distance $d$, and by $D_\delta:=\{z \mid \|z\| \leq \delta\}$ the ball of radius $\delta$ centered at $0$ for the Euclidean distance. We also let $B^+_\delta:=\{z \in B_\delta \mid z_1 \geq 0\}$ (and $B^-_\delta$ is defined in the obvious way), and define the half-balls $D^\pm_\delta$  for the Euclidean distance similarly.

Because $p$ is simple, we have  $r e_1 \in W^s(m_+(p))$,  and because  $W^s(m_+(p))$ is open in $M$,  there exists $r_0 \in (0,r)$ such that the closed spherical cap $C_{r,r_0}$ of $S_{r}$ is contained in $W^s(m_+(p))$; see Fig.~\ref{fig:illpth}-left. 
Hence $V_{r,r_0} \subseteq W^s(m_+(p))$, where we recall that $V_{r,r_0}$ is the image of $C_{r,r_0}$ under the flow as we defined in~\eqref{eq:defVRZ}.  

We claim  that 
\begin{equation}\label{eq:th1inter}
\lim_{\delta \to 0} \frac{\mu(B^+_\delta \cap  V_{r,r_0})}{\mu(B^+_\delta)}=1\end{equation}
 and similarly, that $  \lim_{\delta \to 0} \frac{\mu(B^+_\delta \cap  V^-_{r,r_0})}{\mu(B^+_\delta)} =1$, where $V^-_{r,r_0}$ is the image of a lower spherical cap under the flow. Assuming the claim holds, using elementary properties of measures,  we have that (see Lemma~\ref{lem:measure} in the Appendix for a proof) 
 $$ \lim_{\delta \to 0}\frac{\mu(B_\delta \cap (V_{r,r_0}\cup V^-_{r,r_0}))}{\mu(B_\delta)} = 1$$
 
 Since  $\left(V_{r,r_0}\cup V^-_{r,r_0}\right) \subseteq \left(W^s(m_+(p)) \cup W^s(m_-(p))\right) $, we conclude that for all $\varepsilon>0$, there exists $\delta$ so that \begin{equation}\label{eq:theconc1}
 	\mu\left( \left[W^s(m_-(p))\cup W^s(m_+(p))\right]\cap B_{\delta}\right)\geq (1-\varepsilon)\mu(B_{\delta}),
 \end{equation} 
as announced.

 \begin{figure}[t]
 \includegraphics[scale=.8]{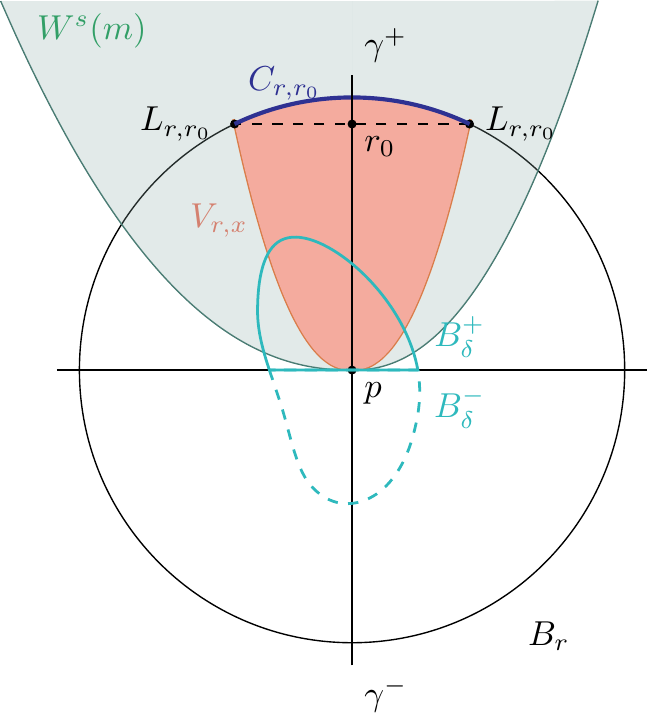}\quad	
 \includegraphics[width = 3.2in, height = 2.6in]{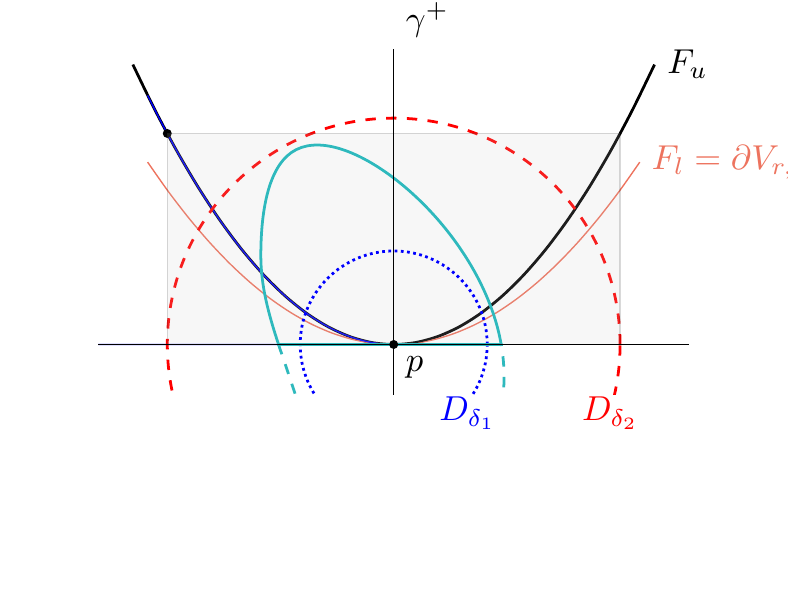} 
 \caption{\small {\it Left: } In the coordinates of Cor.~\ref{cor:diagsys}, the local principal flow lines are the positive and negative $e_1$ (vertical) axis. The spherical cap $C_{r,x}$ is contained in a stable manifold, thus so is its image $V_{r,x}$  under the gradient ascent flow. {\it Right: } We can express $V_{r,x}$ as the epigraph of $F_l(w)$, which is dominated by $F_u(w)$ and thus $\hyp(F_l) \subset \hyp F_u$. Furthermore, $ D_\delta^+ \cap \hyp(F_u)$ is contained in $B_{\delta_2} \cap \hyp (F_u)$ which is itself contained into the cylinder with base a ball of radius $\delta_2$ and height $F_u(\delta_2)$ (light shaded rectangle).}\label{fig:illpth}
 \end{figure}

It now remains to prove the claim, i.e.~prove that~\eqref{eq:th1inter} holds.  Let $W$ and $F_u(w), F_l(w)$ be as in Lemma~\ref{lem:domfunc} and define the graph of $F:W \to \R$ as the set  $\{(F(w),w) \in \R^n\mid w \in W\}$. We denote by $\epi(f)$ the epigraph of a function $f$, and by $\hyp(f)$ its hypograph. Since $F_u \geq F_l$, we have  that (see Fig.~\ref{fig:illpth}-right) 
 $$\epi(F_u)\cap D_\delta^+\subseteq \epi(F_l) \cap D_\delta^+= V_{r,r_0}\cap D_\delta^+,$$ 
for any $0 < \delta<r$.	
Passing to hypographs, we have
\begin{equation}\label{eq:inctech}
V_{r,r_0}\cap D^+_\delta = D_{\delta}^+-(D_\delta^+ \cap \hyp(F_l)) \supseteq  D_{\delta}^+-(D_\delta^+ \cap \hyp(F_u)).
\end{equation}
From~\eqref{eq:uniriem}, we have the inclusions 
\begin{equation}\label{eq:techthincB}D_{\delta_1} \subseteq B_\delta \subseteq D_{\delta_2}
\end{equation}
 for $\delta_1:=\frac{\delta}{\beta}$ and $\delta_2:=\frac{\delta}{\alpha}$. Hence,
 \begin{equation}\label{eq:techthm1} 
B_\delta^+ \cap \hyp(F_u)\subseteq D^+_{\delta_2}\cap \hyp(F_u) . 
\end{equation} 
From~\eqref{eq:techthincB} and~\eqref{eq:techthm1}, we have that\begin{equation}\label{eq:techthm2}
\frac{\mu\left(B_\delta^+ \cap \hyp(F_u)\right)}{\mu(B_\delta^+)} \leq 	\frac{\mu\left(D_{\delta_2}^+ \cap \hyp(F_u)\right)}{\mu(D_{\delta_1}^+)}.
\end{equation}

Because $F_u(w)$ is rotationally symmetric about $e_1$ and strictly increasing as $\|w\|$ increases, we have $$\mu(D_{\delta_2}^+ \cap \hyp(F_u)) \leq c \delta_2^{n-1}F_u({\delta_2}) \leq c \delta^{n-1+\lambda_1/\lambda_2},$$ whereas $\mu\left(D_{\delta_1}^+\right) = c \delta^n$.
Since $\lambda_1/\lambda_2>1$, we conclude from the previous relation together with~\eqref{eq:techthm2} that \begin{equation}\label{eq:techth3}0\leq \lim_{\delta \to 0} \frac{\mu\left(B_\delta^+ \cap \hyp(F_u)\right)}{\mu(B_\delta^+)}  \leq \lim_{\delta \to 0}\frac{\mu(D_{\delta_2}^+ \cap \hyp(F_u))}{\mu(D_{\delta_1}^+)}=0.\end{equation}
From~\eqref{eq:inctech}, we have that 
\begin{equation}\frac{\mu\left(V_{r,r_0}\cap B^+_\rho\right)}{\mu(B_\delta^+)} \geq 1-\frac{ \mu\left(B_\delta^+ \cap \hyp(F_u)\right)}{\mu(B^+_\delta)}.
\end{equation}	
Taking the limit as $\delta \to 0$, using~\eqref{eq:techth3} and recalling that $V_{r,r_0} \subseteq W^s(m^+(p))$ we get that $$\lim_{\delta \to 0}\frac{\mu\left(W^s(m^+(p))\cap B^+_\delta\right)}{\mu(B_\delta^+)} =1,$$ thus proving~\eqref{eq:th1inter} as claimed.
Applying the same reasoning to the stable manifold of $m^-(p)$ and $B^-_\delta$, we get that  similarly  $\lim_{\delta \to 0}\frac{\mu\left(W^s(m^-(p))\cap B^-_\delta\right)}{\mu(B_\delta^-)} =1.$  \end{proof}

 \subsection{Simple gradients are generic}
 
 We now prove that gradient vector fields with simple maxima are generic. There are two requirements to being simple: (1) the smallest eigenvalue of the linearized vector field has geometric multiplicity one, and (2) the principal flow lines need to be contained in stable manifolds of minima of $F$. We treat the two requirements separately. 

To this end, for $F \in C^\infty(M)$, we denote by $\cM_{0,F}$ the set of Riemannian metrics $g$ on $M$ with the property that the smallest eigenvalue of the linearization of $\nabla^g F(p)$  has  geometric multiplicity one when   evaluated at any maximum $p \in \crit_n(F)$. We  write the requirement simply as $\lambda_1(H^g(p))<\lambda_2(H^g(p))$ for all $p \in \crit_nF$. We further denote by $\cM_F$ the subset of $\cM_{0,F}$ consisting of metrics $g$ for which $\grad^g F$ has {\it simple} maxima. Given $g \in \cM$,  we similarly let $\cF_{0,g}$ be the set of Morse functions $F \in C^\infty(M)$ on $(M,g)$ so that for each $p \in \crit_n F$, $\lambda_1(H^g(p)) <\lambda_2(H^g(p))$ and $\cF_g$ the subset of $\cF_{0,g}$ consisting of functions $F$ for which $\grad^g F$ has simple maxima. We will show that $\cM_F$ is residual in $\cM$ and that $\cF_g$ is residual in $C^{\infty}(M)$.

\subsubsection{Geometric multiplicity of the smallest eigenvalue}

We prove that the set of metrics  for which the linearization of $\grad^g F$ has a smallest eigenvalue of multiplicity one at each maximum $p$ is open-dense:

\begin{proposition}\label{prop:cg0F}
The set $\cM_{0,F}$ is open and dense in $\cM$.	
\end{proposition}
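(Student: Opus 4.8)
The plan is to prove openness and density separately, both by reducing to a statement about the symmetric matrix $H^g_\varphi(0)$ in a fixed chart around each maximum. Since $M$ is compact and $F$ is Morse, $\crit_n F = \{p_1,\dots,p_N\}$ is finite, so it suffices to handle one maximum $p$ at a time and take a finite intersection (a finite intersection of open dense sets is open dense). Fix a chart $(\varphi,U)$ with $\varphi(p)=0$. By Lemma~\ref{lem:techH}, $H^g_\varphi(0)$ is diagonalizable with real eigenvalues, and the condition we care about, $\lambda_1(H^g(p))<\lambda_2(H^g(p))$, is chart-independent. So the whole argument can be carried out on the finite-dimensional data $H^g_\varphi(0)$, which depends on the $2$-jet of $g$ at $p$ (equivalently on $F$ and the components of $g$ and their first derivatives at $p$).

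For \textbf{openness}: I would first observe that the map $\cM \to \R^{n\times n}$ sending $g \mapsto H^g_\varphi(0)$ is continuous in the Whitney $C^k$-topology ($k\ge 3$), since $H^g_\varphi(0)$ is a polynomial expression in $F$, the entries of $g^{-1}$, and first derivatives of both, all evaluated at the single point $p$ (this is the explicit local formula for the gradient vector field $\grad^g F = g^{-1}\,dF$ differentiated once). Composing with the eigenvalue map: although $H^g_\varphi(0)$ itself is not symmetric, it is conjugate to a symmetric matrix (Lemma~\ref{lem:techH}), hence has a full set of real eigenvalues varying continuously with $g$. The set of matrices whose two smallest eigenvalues are distinct is open in this conjugacy-invariant sense — more carefully, I would note that "smallest eigenvalue has multiplicity one" is equivalent to "the two smallest eigenvalues, listed with multiplicity, differ," and the ordered eigenvalues of a continuously varying diagonalizable-with-real-spectrum family are continuous functions. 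Pulling this open set back gives an open neighborhood of any $g \in \cM_{0,F}$, and intersecting over the finitely many maxima keeps it open.

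For \textbf{density}: given an arbitrary $g\in\cM$ and a Whitney-open set containing it, I want to perturb $g$ inside that set to land in $\cM_{0,F}$. I would do this one maximum at a time, using a bump function supported in a small coordinate ball around $p$ (disjoint from the other critical points), and perturb $g$ only there; such a perturbation can be made arbitrarily $C^k$-small, hence stays inside the prescribed Whitney-open set, and it does not affect $H^g(p_j)$ at the other maxima. The key computation is that the map from local perturbations of $g$ at $p$ to the symmetric representative of $H^g_\varphi(0)$ is a submersion onto the space of symmetric matrices (with the right signature) — intuitively, near a maximum we may choose $\varphi$ so that $F$ has the Morse normal form $F = -\tfrac12\|z\|^2 + \text{const}$, and then $H^g_\varphi(0)$ is essentially $-g^{-1}(0)$, so choosing the value of the metric at the point directly prescribes the Hessian of the linearized flow. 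Since symmetric matrices with a simple smallest eigenvalue are dense (indeed generic) among all symmetric matrices, an arbitrarily small change of $g(0)$ achieves $\lambda_1(H^g(p))<\lambda_2(H^g(p))$. Carrying this out simultaneously with bump functions at all $N$ maxima gives a metric in $\cM_{0,F}$ arbitrarily close to $g$.

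The \textbf{main obstacle} I anticipate is the density step, specifically making rigorous the claim that local perturbations of $g$ realize an open (full-dimensional) set of possible matrices $H^g_\varphi(0)$ at $p$ — i.e.\ identifying precisely which symmetric matrices arise and checking the submersion/surjectivity claim. This requires writing down the local formula for $\grad^g F$ in coordinates, differentiating at $p$, and seeing how it depends on $g(p)$ and $\partial g(p)$; the cleanest route is to first put $F$ in Morse normal form so that $dF$ vanishes to first order in a controlled way and the dependence on $g(p)$ becomes transparent. A secondary technical point is confirming that eigenvalue continuity really does hold for this non-symmetric but uniformly-diagonalizable family, which follows from Lemma~\ref{lem:techH} together with the fact that the conjugating matrices can be chosen locally boundedly, but deserves a sentence of justification.
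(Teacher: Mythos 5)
Your proof is correct and follows the same overall architecture as the paper's: openness via finite intersection and continuity of eigenvalues, density via a bump-function perturbation of $g$ supported near each maximum, both reducing to the observation that at a critical point $p$ the linearization $H^g(p)$ is the product $g^{-1}(p)\cdot\operatorname{Hess}F(p)$ and hence depends only on $g(p)$. The openness arguments coincide (your worry about eigenvalue continuity for non-symmetric matrices is unnecessary --- eigenvalues of \emph{any} continuously varying matrix family vary continuously as a multiset, so Lemma~\ref{lem:techH} plus root continuity already suffices without invoking bounded conjugators).

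Where you genuinely diverge from the paper is in the linear-algebra core of the density step. You first pass to a Morse chart, which forces $\operatorname{Hess}F(p)=-I$ and hence $H^g(p)=-g^{-1}(p)$, so the question reduces to the standard fact that symmetric positive definite matrices with a simple largest eigenvalue are dense. The paper instead works in an arbitrary chart, keeps $H^g(p)=g^{-1}(p)\operatorname{Hess}F(p)$ as a product of a positive definite and a negative definite matrix, and invokes Lemma~\ref{lem:techmatrix1} (an explicit construction showing that a small positive-definite perturbation of one factor in a product $AB$ of positive definite matrices produces distinct eigenvalues), combined with the local diffeomorphism $X\mapsto X^{-1}$ to translate the perturbation of $g^{-1}(p)$ into one of $g(p)$. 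Your route trades the matrix lemma for an appeal to the Morse Lemma; this is conceptually cleaner (the target set of achievable Hessians becomes transparent, as you note), though the amount of work is comparable since the Morse Lemma is itself nontrivial, and the paper's version generalizes more readily to the companion statement Proposition~\ref{prop:cf0} where the function is perturbed and there is no analogous normal form for $g$. Both are complete proofs.
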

\begin{proof}	
We first show the set is open. Let $F$ be a Morse function so that for each $p \in \crit_nF$, 	$\lambda_1(H^g(p))<\lambda_2(H^g(p))$. Since the eigenvalues of $H^g(p)$ depend continuously on $g$, there exists an open set $\cU_p \subset \cG$ so that for all $h \in \cU_p$, $\lambda_1(H^h(p))<\lambda_2(H^h(p))$. Since $|\crit_n F|$ is finite, $\cU:= \bigcap_{p \in \crit_nF} \cU_p \subset \cM$ is an open set containing $g$. Hence $\cM_{0,F}$ is open.

To show that $\cM_{0,F}$ is dense, assume that $g$ is so that there exists $p \in \crit_nF$ with $\lambda_1(H^g(p))=\lambda_2(H^g(p))$. We show that we can find, in any open set containing $g$, a metric $h$ so that $\lambda_1(H^h(p))<\lambda_2(H^h(p))$. Recall that in coordinates around $p$ sending $p$ to $0 \in \R^n$, we can write $H^h(p)= H^{-1}(0)\frac{\partial^2 F}{\partial z^2}$, where $H(x)$ is a positive definite matrix defined in a neighborhood of $0$. Using a bump function around $p$, the fact that the map $X \mapsto X^{-1}$ is a diffeomorphism around $X=H^{-1}(0)$, and Lemma~\ref{lem:techmatrix1} (which states that if a product $AB$ of two positive definite matrices has repeated eigenvalues, there exists $A'$ {\em positive definite} and arbitrarily close to $A$ so that  $A'B$ has {\it distinct} eigenvalues), we can obtain a metric $h$ arbitrarily close to $g$ and so that $H^{-1}(0)\frac{\partial F}{\partial x^2}$ has distinct eigenvalues.
\end{proof}

We now show the equivalent result for a fixed metric $g$ and arbitrary Morse function $F$. Just as above, we in fact prove the stronger statement that the set of Morse function so that $H^g(p)$ has {distinct} eigenvalues at each of the critical points of $F$ is open dense.  The proof relies on the notion of jet tranversality -- we refer to~\cite{hirsch2012differential} for an introduction.
\begin{proposition}\label{prop:cf0}
	The set $\cF_{0,g}$ is open dense in $C^\infty(M)$.
\end{proposition}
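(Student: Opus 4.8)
\textbf{Proof proposal for Proposition~\ref{prop:cf0}.}

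The plan is to use jet transversality applied to the $2$-jet extension of $F$, since the condition we want to control---non-degeneracy of critical points and distinctness of the eigenvalues of $H^g(p) = H^{-1}(z)\,\partial^2 F/\partial z^2$ at each critical point---is a closed condition on the $2$-jet of $F$. Openness is the easier half: if $F$ is Morse with distinct eigenvalues of $H^g(p)$ at each critical point, then since $\crit F$ is finite, each critical point persists in a small neighborhood (by the implicit function theorem applied to $dF = 0$, using non-degeneracy), the Hessian at the perturbed critical point varies continuously with $F$ in the $C^2$-topology, and distinctness of eigenvalues of a matrix depending continuously on $F$ is an open condition; intersecting finitely many open sets, one per critical point, gives an open neighborhood of $F$ inside $\cF_{0,g}$.

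For density, first recall that Morse functions are dense in $C^\infty(M)$, so it suffices to perturb a given Morse function $F$ to make the eigenvalue condition hold while preserving the Morse property. First I would set up the target: in a fixed coordinate chart $(\varphi, U)$ around a critical point, writing $A(z) = H^{-1}(z)$ (a fixed smooth positive-definite-matrix-valued function determined by $g$), the relevant object is $A(z)\,\mathrm{Hess}(F)(z)$, and I want the bad set $\Sigma \subset J^2(M,\R)$ consisting of $2$-jets $(z, F(z), dF(z), \partial^2 F(z))$ with $dF(z) = 0$ and $A(z)\,\partial^2 F(z)$ having a repeated eigenvalue. One shows $\Sigma$ is a finite union of submanifolds of $J^2(M,\R)$ of codimension $\geq n+1$: the condition $dF = 0$ cuts codimension $n$, and within the space of symmetric matrices $S$ such that $A(z)S$ (conjugate to a symmetric matrix, by Lemma~\ref{lem:techH}-type reasoning, hence with real eigenvalues) has a repeated eigenvalue, the locus of such $S$ has codimension $\geq 2$ in $\mathrm{Sym}(n)$---this is the standard fact that matrices with repeated eigenvalues form a codimension-$2$ subvariety, transported through the linear isomorphism $S \mapsto A(z)S$. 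Since $\dim M = n < n+1 = \codim \Sigma$, the jet transversality theorem (Thom) implies that for a residual, and here in fact open dense (because $\Sigma$ is closed and we can stratify), set of $F$, the $2$-jet extension $j^2 F: M \to J^2(M,\R)$ misses $\Sigma$ entirely. Missing the ``$dF=0$ and repeated eigenvalue'' stratum while transverse to the ``$dF = 0$, distinct eigenvalue'' part precisely says $F$ is Morse with $H^g(p)$ having distinct eigenvalues at every critical point, i.e.\ $F \in \cF_{0,g}$. Finally I would invoke Lemma~\ref{lem:techmatrix1} only if a more hands-on argument is wanted: bump-function-localize near a bad critical point $p$ and add a small quadratic correction $\sum \epsilon_i z_i^2$ in the chart to split the eigenvalues of $A(0)\,\partial^2 F(0)$, which changes $F$ only in a small neighborhood and only at second order, preserving the Morse property elsewhere for small $\epsilon$.

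The main obstacle I anticipate is the codimension bookkeeping in $J^2(M,\R)$: one must verify carefully that the ``repeated eigenvalue'' locus really has codimension $2$ \emph{after} composing with the fixed linear map $S \mapsto A(z)S$ and that this holds uniformly as $z$ (hence $A(z)$) varies, so that $\Sigma$ is a genuine stratified set of the claimed codimension; the linear isomorphism preserves codimension, but one should also confirm that the product $A(z)S$ indeed always has real eigenvalues (so that ``repeated eigenvalue'' is the expected real-codimension-$2$ phenomenon and not something worse), which follows from the symmetrizability already established in Lemma~\ref{lem:techH}. A secondary technical point is upgrading the residual set from jet transversality to an \emph{open} dense set, which requires noting that $\Sigma$ is closed in $J^2(M,\R)$ (it is cut out by $dF = 0$ together with the vanishing of the discriminant of the characteristic polynomial of $A(z)\,\partial^2 F(z)$, both closed conditions) and that on a compact manifold transversality to a closed set is an open condition.
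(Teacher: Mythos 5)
Your proposal is essentially the same argument as the paper's: openness via the implicit function theorem and continuous dependence of eigenvalues at the finitely many critical points, and density via the jet transversality theorem applied to a bad locus $\Sigma \subset J^2(M,\R)$ cut out by $dF = 0$ and a repeated-eigenvalue condition on $G^{-1}(x)H$, which has codimension $\geq n+1 > \dim M$, so that transversality of $j^2 F$ to $\Sigma$ forces avoidance, and closedness of $\Sigma$ (via the discriminant) plus compactness of $M$ upgrades ``residual'' to ``open dense.''

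Two small points where you differ from the paper, both in your favor or neutral. First, your codimension count is actually slightly sharper: you work in $\mathrm{Sym}(n)$ and observe that $S \mapsto A(z)^{1/2} S A(z)^{1/2}$ is a linear isomorphism of symmetric matrices, so the repeated-eigenvalue stratum has codimension $\geq 2$ there, giving total codimension $\geq n+2$; the paper works with the discriminant locus $Z$ in all of $\R^{n\times n}$ and only claims codimension $\geq 1$ for that piece, then invokes Whitney stratification to express $Z$ as a finite union of submanifolds. Either bound suffices since all that matters is exceeding $\dim M = n$. Second, the alternative hands-on route you sketch at the end---bump-localizing near $p$ and adding a small quadratic term to split eigenvalues, invoking Lemma~\ref{lem:techmatrix1}---is precisely the mechanism the paper uses for the parallel metric-side statement (Proposition~\ref{prop:cg0F}); the paper deliberately uses the jet-transversality route for $\cF_{0,g}$ and the bump-function route for $\cM_{0,F}$, so your observation that both are available here is correct.
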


\begin{proof}
We know that Morse functions are an open dense subset of $C^\infty(M)$~\cite{banyaga2013lectures}. We show that Morse functions for which $H^g(p)$ has distinct eigenvalues at $p \in \crit_n F$ form an open dense subset of the set of Morse functions, and thus are open dense in $C^\infty(M)$.

Given $A \in \R^{n \times n}$, denote by $p_A(s)$ its characteristic polynomial in the indeterminate $s$ and let $p'_A(s)=\frac{d}{ds}p_A(s)$. Denote by $r_A \in \R^{2n\times 2n}$ the Sylvester resultant of $p_A$ and $p'_A$. It is well known that $\det(r_A)=0$ if and only if $p_A$ has a double root. 
Let $Z\subset \R^{n \times n}$ be the zero set of $\det(r_A)$. Relying on Whitney's stratification theorem~\cite{whitney1957elementary}, we can show that $Z$ is a finite union of closed manifolds.

Denote by $J^2(M,\R)$ the second jet-space of maps $F:M\to \R$ and define $$C =\{(x,y,0_n,H)\in J^2(M,\R) \mid x\in M, y \in \R, G^{-1}(x)H \in Z)\},$$ where $G(x)$ is the matrix expression of $g$. Then $C$ is a finite union of submanifolds of $J^2(M,\R)$ of codimension $\geq n+1$ (since we restrict the first derivative to be zero, and $Z$ is the union of submanifolds of codimension at least one.)	 Consequently, the second jet prolongation of $F$, $j^2(F)$, and $C$ are transversal {\it only} at points at which they do not intersect. Furthermore, $C$ is easily seen to be closed in $J^2(M,\R)$.  Hence, from the jet-transversality theorem~\cite{hirsch2012differential}, we conclude that the set of real-valued functions without critical points for which $H^g(p)$ has repeated eigenvalues is open and dense in $C^\infty(M)$.
\end{proof}

\subsubsection{Continuity of principal flow lines with respect to $F/g$} We now address the second part of the simplicity of $\grad F$ requirement: the principal flow lines of each maxima belong to the stable manifolds of minima of $F$. The first step is to establish that principal flow lines depend continuously on the metric/function.

\begin{lemma}\label{lem:contprincipal} Let $(M,g)$ be a closed Riemannian manifold. Let $F$ be a smooth Morse function, and $p \in M$ a simple maximum of $\grad^g F$. Then, there exists a $C^1$-embedded closed ball $B_p \ni p$ in $M$ and an open set $\cU \subset \cM$ containing $g$  with the following properties:
\begin{enumerate}
\item $B_p$ contains no other critical points of $F$

\item  the principal flow line $\gamma^+(p,h)$ (resp. $\gamma^-(p,h)$) intersect $\partial B_p$ at one point, and the intersection $\gamma^+ \cap \partial B_p$ (resp. $\gamma^- \cap \partial B_p$) depends continuously on $h$, $h \in \cU$. 
\item the boundary $\partial B_p$ is everywhere transversal to $\grad^{h} F$, $h \in \cU$ 
\item $B_p$ is an invariant set for the gradient ascent flow of $\grad^{h} F$, $h \in \cU$.
\end{enumerate}
\end{lemma}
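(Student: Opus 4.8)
The plan is to build the neighborhood $B_p$ using the $C^1$-linearizing chart $(\varphi,U)$ of Corollary~\ref{cor:diagsys} for the metric $g$, and then argue that a small enough Euclidean ball in those coordinates retains all four properties under perturbation of the metric. Concretely, in the chart $\varphi$ the gradient ascent flow of $\grad^g F$ is $C^1$-conjugate to $\dot z = -\Lambda z$ with $\Lambda$ negative definite, so a Euclidean sphere $S_r(0)$ of small radius $r$ is everywhere transversal to $\grad^g F$, and the closed ball $D^n_r(0)$ is invariant under the gradient ascent flow and contains no critical point other than $p$ (shrink $r$ using that critical points are isolated). Pull this back to $B_p := \varphi^{-1}(D^n_r(0))$. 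This handles properties (1)--(4) for $g$ itself; properties (2)--(4) for nearby $h$ require a perturbation argument.

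For property (3), transversality of $\partial B_p$ to $\grad^h F$: since $\partial B_p$ is compact and $\grad^h F$ depends continuously (indeed smoothly) on $h$ in the Whitney $C^k$-topology, and transversality to a codimension-one submanifold is an open condition (the quantity $g(\grad^h F, N)$ with $N$ a unit normal is nonvanishing on $\partial B_p$ for $h=g$, hence stays nonvanishing for $h$ in some open $\cU_3 \ni g$), this is immediate. For property (4), invariance: transversality of $\partial B_p$ to $\grad^h F$ together with the fact that $\grad^h F$ points \emph{outward} along $\partial B_p$ — which is again an open condition inherited from $g$, where $-\Lambda z \cdot z < 0$ shows the ascent flow points inward, i.e. the \emph{ascent} flow points inward so $B_p$ is forward-invariant for gradient ascent — shows $B_p$ is invariant for gradient ascent of $\grad^h F$ on an open set $\cU_4 \ni g$. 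For property (1), we keep $B_p$ fixed and note it only concerns $F$, which is fixed; if instead we are in the dual setting with $F$ varying and $g$ fixed, we shrink to exclude the (continuously varying, still isolated) critical points, using that a simple maximum persists under small perturbation and remains the unique critical point in a slightly smaller ball.

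The substantive step is property (2): that the principal flow line $\gamma^+(p,h)$ meets $\partial B_p$ in a single point depending continuously on $h$. The point $p$ is a hyperbolic rest point of the gradient ascent field (all eigenvalues of $-H^h(p)$ positive), so its unstable manifold $W^u(p,h)$ — which for a maximum is all of a neighborhood — and more relevantly the eigenspace structure vary continuously with $h$: the smallest eigenvalue $\lambda_1(H^h(p))$ is simple for $h$ near $g$ by Proposition~\ref{prop:cg0F}, so the corresponding eigenvector $v(h) \in T_pM$ is well-defined up to sign and depends continuously (analytically) on $h$. The principal flow line is the unique gradient ascent trajectory reaching $p$ tangentially to $v(h)$; by the stable manifold theorem applied to the time-reversed flow (or by Hartman's theorem giving a $C^1$-conjugacy whose dependence on $h$ we must track), the one-dimensional ``strong unstable'' curve tangent to $v(h)$ varies continuously in the $C^0$ sense on compact pieces. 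Since $\partial B_p$ is transversal to this curve for $h=g$ and the curve crosses $\partial B_p$ at a single point (it exits the ball and never returns, by invariance), continuity of the curve plus openness of transverse single-point intersection gives a point $\gamma^{+}(p,h) \cap \partial B_p$ depending continuously on $h \in \cU := \cU_3 \cap \cU_4 \cap \{\lambda_1 < \lambda_2\}$; the same for $\gamma^-$.

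The main obstacle is making precise the continuous dependence in property (2): Hartman's linearization theorem as stated gives only a $C^1$ conjugacy for a single vector field, with no claim about how $\psi$ varies with the field, so one cannot directly transport the ``$z_1$-axis is the principal flow line'' picture across the family $\{\grad^h F\}$. The clean way around this is to avoid Hartman entirely for property (2) and instead invoke the stable/unstable manifold theorem with parameters (e.g.\ via~\cite{palis1982geometric} or~\cite{shub2013global}), which does provide $C^1$-dependence of the strong-unstable manifold of the tangent line $\Span\{v(h)\}$ on the parameter $h$; continuity of the intersection with the fixed transversal $\partial B_p$ then follows from the implicit function theorem. I would structure the proof to isolate this parametrized invariant-manifold input as the one nontrivial citation, with the remaining three properties dispatched by the openness arguments sketched above.
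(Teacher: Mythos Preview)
Your proposal is correct and, for items (1), (3), (4), essentially identical to the paper's argument: linearize via Corollary~\ref{cor:diagsys}, take a small Euclidean ball in those coordinates, and use compactness of $\partial B_p$ plus continuity of $h\mapsto \grad^h F$ to keep transversality and inward-pointing open in $h$. (Watch your signs: with the paper's conventions $\Lambda$ has negative diagonal and gradient \emph{ascent} in the chart is $\dot z=\Lambda z$, so the relevant inequality is $-z^\top\Lambda z>0$; your sketch flips this once but lands on the right conclusion.)

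The genuine divergence is in item~(2). You correctly flag that Hartman's theorem, as quoted, says nothing about how the conjugacy $\psi$ varies with the vector field, and you route around this by invoking the parametrized strong stable (equivalently, for the time-reversed flow, strong unstable) manifold theorem: under the spectral gap $\lambda_1<\lambda_2$ the one-dimensional invariant curve tangent to the $\lambda_1$-eigenspace exists and depends continuously on $h$, so its transverse intersection with $\partial B_p$ does too. The paper instead stays with Hartman and appeals to a \emph{parametrized} version---what~\cite{newhouse2017} calls ``robust linearization''---asserting that the linearizing diffeomorphism $\psi_h$ can be chosen to depend continuously on $h$; the principal flow line in $z$-coordinates is then $\psi_h^{-1}$ of the half-line spanned by $v_1(H^h_{\psi}(p))$, and continuity follows from continuity of $\psi_h$ together with Lemma~\ref{lem:conteig}. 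Your route cites a more standard black box (parametrized invariant manifolds are textbook, whereas robust $C^1$ linearization is less widely stated) and uses exactly the amount of structure needed for the lemma; the paper's route gives the full linearization picture uniformly in $h$, which it reuses later in the proof of Proposition~\ref{prop:main2}.
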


\begin{proof}
 We work in the chart $(\varphi,U)$ afforded by Corollary~\ref{cor:diagsys} sending $p$ to $0 \in \R^n$, and for which the gradient flow differential equation is  $\dot z = \Lambda z$,  with $\Lambda$ a diagonal matrix with diagonal entries $\lambda_1<\lambda_2 \leq \cdots \leq \lambda_n<0$.
 
 Since $\grad^h F$ depends continuously on $h$, from the proof of Hartman's theorem~\cite{hartman1960mex}, we know that there exists a neighborhood $V \ni 0$, a neighborhood $\cU_0 \subset \cM$ of $g$ and a {\it continuous} mapping $\psi:\cU_0 \to \Diff(V,\R^n)$ such that for any metric $h \in \cU_0$, the diffeomorphism $\psi_h: V \to \R^n$ linearizes $\grad^h F$ around $0$ (see also~\cite[p. 215]{newhouse2017}, the author calls the continuous dependence of the linearizing diffeomorphism with respect to the vector field {\it robust linearization}). 
Note that in the coordinates used, $\psi_g=Id$.
 
The principal flow lines of $\grad^gF$ in the $z$-coordinates  are locally given by the half-lines starting at the origin and spanned by the vectors $\pm e_1$.  Let $r>0$ be such that $B_r:=B_r(0) \subset V$. The half-lines intersect $\partial B_r=S_r$ at exactly two points, denote them $z_+(g), z_-(g)$, and these intersections are clearly transversal.

Taking a subset $\cU_1 \subset \cU_0$, we can ensure that for all $h \in \cU_1$, $\lambda_1(H^h(p))<\lambda_2(H^h(p))$, since the eigenvalues depend continuously on $h$.  Similarly, in the (linearizing) coordinates $\psi_h$, the principal flow lines of $\grad^h F$ are half-lines starting at the origin and spanned by an eigenvector $v_1(H^h_\psi(p))$ associated with $\lambda_1(H^h_\psi(p))$ and, from Lemma~\ref{lem:conteig}, we know that the eigenspace $v_1(H^h_\psi(p))$ depends continuously on $h$ as well. 
The principal flow lines of $\grad^hF$ in the $z$-coordinates  are given by the image under $\psi_h^{-1}$ of the half-line  starting at zero and parallel to $v_1(H^h_\psi(p))$, and thus depend continuously on $h$. Now since the principal flow lines of $\grad^gF$ intersect $S_r$ transversally, by taking a subset $\cU_2 \subset \cU_1$, we can ensure that for all $h \in \cU_2$, the principal flow lines of $\grad^h F$ in $z$-coordinates intersect $S_r$ transversally and the intersections $z_+(h), z_-(h)$ are continuous in $h$.

 Finally, for the last two items, since $\grad^gF$ is linearized by $\varphi$ as $\dot z = \Lambda z$, with $\Lambda$ diagonal and with negative, real eigenvalues, then $\grad^g F$ evaluated on $S_r$ points inward, toward $B_r$: indeed, the inward pointing normal to $S_r$ at $z$ is $-z$ and  its inner product with $\grad^gF$ is $-z^\top \Lambda z >0$. Because $S_r$ is compact, the same conclusion holds for vector fields close enough to $\grad^g F$. Hence $B_r$ is invariant for $\grad^hF$, for $h$ close to $g$. 
 Setting $B_p$ to be the inverse image under the chart  \begin{equation}\label{eq:defBp}B_p:=\varphi^{-1}(B_r(0)),\end{equation} we obtain a set with the required properties.
\end{proof}
 
\begin{remark}
The above result transposes immediately to the case where the Riemannian metric $g$ is fixed, and we consider an open set of function $\cU_0 \subset C^\infty(M)$ containing $F$ where $\grad^gF$ has a simple maximum at $p$. The continuous dependence of $\grad^gF$ on $F$ is obvious. The only point of demarcation is that when varying $F$ to a nearby $F_1$, the critical points of $\grad^gF_1$ may move. It is easy to see though that for a $\cU_0$ small enough, they move continuously and their index remains the same: there exists a	{\it continuous} map  $P:\cU_0 \to V\subset M$ so that $P(F_1)$ is a critical point of $F_1$. (See, e.g.,\cite[Lemma 3.2.1]{palis1982geometric} or~\cite{newhouse2017}).
\end{remark}

\subsubsection{Genericity of simple gradients}
We now prove the second part of the main theorem, namely that simple gradient flows are generic. 

\begin{proposition}\label{prop:main2}
Let $F \in C^\infty(M)$ be a Morse function. The set $\cM_F$ of Riemannian metrics for which $\grad^g F$ is simple is residual. Similarly, for a Riemannian manifold $(M,g)$, the set $\cF$ of smooth functions for which $\grad^g F$ is simple is residual.
\end{proposition}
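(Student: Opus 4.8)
The plan is to prove both halves of Proposition~\ref{prop:main2} by the same template, combining the open--dense results of Propositions~\ref{prop:cg0F} and~\ref{prop:cf0} with the continuity afforded by Lemma~\ref{lem:contprincipal} and the density statement of Lemma~\ref{lem:bounstabclosed}. Fix a Morse function $F$; I will show $\cM_F$ is residual in $\cM$, and the argument for $\cF_g$ is verbatim with the roles of metric and function swapped (using the Remark after Lemma~\ref{lem:contprincipal} to handle the drift of critical points). Since $M$ is compact and $F$ Morse, $\crit_n F$ is finite, so it suffices to show that for each fixed maximum $p$ the set $\cM_F^p := \{g \in \cM_{0,F} \mid \gamma^\pm(p,g) \subset \bigcup_{q \in \crit_0 F} W^s(q)\}$ is residual; then $\cM_F = \bigcap_{p \in \crit_n F} \cM_F^p$ is a finite intersection of residual sets, hence residual.

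First I would fix $p$ and invoke Lemma~\ref{lem:contprincipal} to get, for each $g \in \cM_{0,F}$, a $C^1$-embedded closed ball $B_p$ and an open neighborhood $\cU$ of $g$ in $\cM_{0,F}$ on which: the principal flow lines $\gamma^\pm(p,h)$ each hit $\partial B_p$ transversally in a single point $z_\pm(h)$ depending continuously on $h$; $\partial B_p$ is transversal to $\grad^h F$; and $B_p$ is forward-invariant for the ascent flow. The key observation is that whether $\gamma^+(p,h)$ lands in a stable manifold of a minimum is decided by whether the point $z_+(h) \in \partial B_p$ lies in the open set $M_0^{\partial B_p} := \big(\bigsqcup_{q \in \crit_0 F} W^s(q)\big) \cap \partial B_p$: indeed $\gamma^+(p,h)$ is the forward ascent trajectory through $z_+(h)$, equivalently the descent trajectory through that point reversed, so $z_+(h) \in W^s(q)$ for some minimum $q$ iff the descent flow from $z_+(h)$ converges to a minimum. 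By Lemma~\ref{lem:bounstabclosed} applied with $S = \partial B_p$ (which is a codimension-one embedded submanifold transversal to $\grad^h F$), $M_0^{\partial B_p}$ is open and dense in $\partial B_p$ for every $h \in \cU$.

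Now define, for the fixed $p$ and the ambient open set $\cM_{0,F}$, the set $\cM_F^p$ as above. To see it is residual I would cover $\cM_{0,F}$ by countably many of the neighborhoods $\cU$ from Lemma~\ref{lem:contprincipal} (possible since $\cM$ is second countable, being a subset of a separable Banach-manifold-type space in the Whitney $C^k$ topology; alternatively argue locally and patch), and on each such $\cU$ show $\cM_F^p \cap \cU$ is residual in $\cU$. On $\cU$ the map $h \mapsto (z_+(h), z_-(h)) \in \partial B_p \times \partial B_p$ is continuous, and $\cM_F^p \cap \cU$ is the preimage of $M_0^{\partial B_p} \times M_0^{\partial B_p}$ under this map. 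Since the target is open, the preimage is open, so $\cM_F^p \cap \cU$ is \emph{open} in $\cU$ — we get more than residual, we get open. For density, given any $h_0 \in \cU$ and any smaller neighborhood, I need to perturb $h_0$ inside $\cU$ so that $z_\pm$ land in the dense set $M_0^{\partial B_p}$. This is the one place genuine work is required and the main obstacle: continuity of $h \mapsto z_\pm(h)$ alone does not let us move $z_\pm$ to a prescribed location, because a priori the map could be locally constant. The fix is a transversality/jet argument analogous to Proposition~\ref{prop:cf0}: I would show that perturbing $g$ (resp. $F$) by a bump function supported near the descent trajectory emanating from $z_+(h_0)$ — but away from $p$ and away from $\crit_0 F$ — moves that trajectory's $\omega$-limit, so that the set of $h$ for which the trajectory through $z_+(h)$ avoids the measure-zero, nowhere-dense set $M_1^{\partial B_p}$ along its entire length is dense; equivalently, one perturbs to make the ascent trajectory from $p$ transversal to the (codimension $\geq 1$) union of stable manifolds of index $\geq 1$ critical points, which is achieved on a dense set by the Kupka--Smale / jet-transversality machinery already used in Proposition~\ref{prop:cf0}. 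Assembling: $\cM_F^p$ is open and dense in $\cM_{0,F}$, which is open and dense in $\cM$ by Proposition~\ref{prop:cg0F}, so $\cM_F^p$ is residual (indeed open dense) in $\cM$; intersecting over the finitely many $p \in \crit_n F$ gives $\cM_F$ residual in $\cM$. The identical argument with Proposition~\ref{prop:cf0} in place of Proposition~\ref{prop:cg0F} and the critical-point-tracking Remark gives $\cF_g$ residual in $C^\infty(M)$, completing the proof of the main theorem via Proposition~\ref{prop:main1}.
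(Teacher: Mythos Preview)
Your overall scheme is the paper's: work on a collar sphere $\partial B_p$ around each maximum, reduce ``principal flow line goes to a minimum'' to ``$z_\pm(h)$ lands in the open--dense subset $M_0^{\partial B_p}$,'' and combine continuity (openness) with a perturbation (density). Two points need correction, one minor and one substantial.

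\emph{Minor (openness).} You write that $\cM_F^p\cap\cU$ is ``the preimage of $M_0^{\partial B_p}\times M_0^{\partial B_p}$'' under $h\mapsto(z_+(h),z_-(h))$. This is not a preimage: the target $M_0^{\partial B_p}$ itself depends on $h$, since the basins $W^s(q,h)$ move with the metric. The openness conclusion survives, but the argument should be that if the descent trajectory from $z_+(h_0)$ enters a uniform trapping neighbourhood of a minimum in finite time, the same holds for nearby $h$ by continuous dependence. The paper sidesteps this by exhausting each saddle's stable manifold by \emph{compact} pieces $W_k^s(s_j,h)=e^{-k\nabla^hF}W_0^s(s_j,h)$, which vary continuously in $h$ as compact sets; the corresponding $\cM_{k,i}$ are then open by an honest preimage argument, and one gets residual as $\bigcap_k\cM_k$ rather than your open--dense.

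\emph{Substantial (density).} You correctly flag that continuity of $h\mapsto z_\pm(h)$ does not let you \emph{steer} $z_\pm$, and then defer to ``bump function / Kupka--Smale / jet transversality.'' This is the entire content of the proof and cannot be blackboxed. Two concrete problems. First, Proposition~\ref{prop:cf0} is a $2$-jet statement at critical points and says nothing about trajectories, so the invoked machinery does not apply. Second, your proposed perturbation is supported along the descent trajectory from $z_+(h_0)$, i.e.\ \emph{outside} $B_p$; that leaves $z_+(h)$ fixed and instead moves $M_0^{\partial B_p}(h)$, so the task becomes ``redirect this specific trajectory to a minimum'' --- and one bump may only push it onto another saddle's stable manifold, forcing an iteration you do not control. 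The paper perturbs instead in a flowbox $K$ strictly \emph{inside} $B_p$: since $M\setminus B_p$ is forward-invariant for descent, this keeps every $W_k^s(s_j,\cdot)\cap\partial B_p$ \emph{fixed} (decoupling target from perturbation), and one needs only show $z_+(h_v)$ can be moved to any nearby point of $\partial B_p$. That requires two lemmas you omit: Lemma~\ref{lem:yvmetricgrad}, showing the perturbed field $\nabla^hF+\theta(\cdot)v$ \emph{is} $\nabla^{h_v}F$ for some nearby metric $h_v$ (not automatic --- one builds $h_v$ by hand on the splitting $\Span(\nabla^hF+\theta v)\oplus\ker dF$), and Lemma~\ref{lem:philocallysurjective}, a first-variation computation showing $v\mapsto z_+(h_v)$ is a submersion at $v=0$. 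With those in hand density follows from Lemma~\ref{lem:bounstabclosed}; without them the argument is a sketch of a sketch.
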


We prove the first statement, and then indicate the minor changes needed to obtain the second statement.

\begin{proof}

Pick a Morse function $F \in C^\infty(M)$ and metric $g \in \cM_{0,F}$. We denote by $p_1,\ldots, p_{m}$ and by $s_1,\ldots,s_l$ the maxima and  saddle points of $F$, respectively. We have shown that $\cM_{0,F}$ is open dense in $\cM$, it  thus remains to show that metrics $g$ in $\cM_{0,F}$ for which the principal flow lines of $\grad^gF$ at $p_i$, $1 \leq i \leq m$, belong to the stable manifolds of some minima form a generic set. Owing to the stable manifold decomposition of $M$ and the fact that $W^s(p)=\{p\}$ for $p \in \crit_n F$, it is equivalent to show that, generically for $g$, the principal flow lines of $\grad^g F$ at $p_i$ do not belong to the stable manifold of some saddle points.

To this end, we will make use of the following straightforward characterization of generic sets:  given that  $\cM_{0,F}$ is dense in $\cM$, the subset $\cM_{F}\subseteq \cM$ is generic if and only if for each $g \in \cM_{0,F}$, there exists a neighborhood $\cN_g$ of $g$ in $\cM$ so that $\cM_{F}  \bigcap \cN_g$ is generic in $\cN_g$. For a proof of this statement, we refer to, e.g.,\cite[Lemma 3.3.3]{palis1982geometric}. The statement allows us to consider only elements of $\cM_{0,F}$, which is an easier task than considering any element of $\cM_F$.

 For each $s_i$, we let $W_0^s(s_i,g)$ be a compact neighborhood of $s_i$ in the stable manifold $W^s(s_i,g)$. Let $\Sigma_i^s$ be a codimension one submanifold of $M$ that is (1) transversal to $\grad^g F$ and to $W_0^s(s_i,g)$ and (2) meets $W_0^s(s_i,g)$ at the boundary $\partial W_0^s(s_i,g)$. The construction of the set $\Sigma_i^s$ appears in the proof of the Kupka-Smale theorem~\cite{peixoto1967approximation}, and we refer to, e.g.,~\cite[p.107]{palis1982geometric} for a constructive proof of its existence.

Because $\grad^h F$ depends continuously on $h$, we know from the stable manifold theorem~\cite[Th.~2.6.2]{palis1982geometric} that for $h$ in a small enough neighborhood $\cN_g \subset \cM_{0,F}$,  the maps $h \mapsto W_0^s(s_i,h)$, $1 \leq i \leq l$, are continuous and so that $W_0^s(s_i,h)$ intersects $\Sigma_i^s$ transversally at $\partial W_0^s(s_i,h)$, $1 \leq i \leq l$. Note that since $\cM_{0,F}$ is open in $\cM$, $\cN_g$  is also a neighborhood of $g$ in $\cM$.
	
 Let $k \geq 1$ be a positive integer. Define $$W_k^s(s_i,h):=e^{-k\grad^hF}\cdot W_0(s_i,h),$$ i.e., the image of $W_0^s(s_i,h)$ by applying the gradient flow  for a time of $k$ (or the gradient {\it ascent} flow for a time $-k$.) Since $e^{-k\grad^hF}: M \to M$ is a diffeomorphism for each $k$, $W_k^s(s_i,h)$ is a compact subset of $M$ that depends continuously on $h$. Finally, we have by definition that $$W^s(s_i,h) = \bigcup_{k\geq 0} W_k^s(s_i,h).$$

Let $\cM_{k,i}(\cN_g)\subseteq \cM_{0,F} \bigcap \cN_g$ be the set of metrics $h$ in $\cN_g$ for which the local principal flow lines of $\grad^h F$ at $p_i$ do {\em not} intersect $W^s_k(s_j,h)$ for all $1 \leq j \leq l$, $1 \leq i \leq m$. Let $$\cM_k(\cN_g) =\bigcap_{i=1}^m \cM_{k,i}(\cN_g).$$  We will show that for all $k \geq 0$, $\cM_k(\cN_g) $ is open and dense in $\cN_g$. Since $\cap_{k=1}^\infty \cM_k(\cN_g) = \cM_F \cap \cN_g$,  this shows that $\cM_F \cap \cN_g$ is generic and, using the characterization of generic sets described above, proves the result. 
	
{\em $\cM_{k,i}(\cN_g)$ is open in $\cN_g$:}
We show that for any $h \in \cM_{k,i}(\cN_g)$, there exists an open neighborhood $\cU_h$ of $h$ contained in $\cM_{k,i}(\cN_g)$.
		 
To this end, let $B_{p_i} \subset M$ and $\cU^i \subset \cM_{0,F}$ be the closed ball and open set, respectively, from Lemma~\ref{lem:contprincipal} for the metric $h$. Since $\grad^hF$ is transversal to $\partial B_{p}$ and $\codim \partial B_{p_i} =1$, then $ W_k(s_j,h)$ and  $\partial B_{p_i}$ intersect transversally. Additionally, because the map $h' \mapsto W^s_i(s_j,h')$ is continuous for $h' \in \cM_{k,i}(\cN_g)$, so are the intersections of $W^s_i(s_j,h')$ with $B_{p_i}$ as a function of $h'$. 	From the same Lemma, denoting by $\gamma_{i,0}^{h'}$ the (positive) local principal flow line of $\grad^{h'}F$ at $p_i$, we know that the map $h' \mapsto \gamma_{i,0}^{h'} \cap \partial B_{p_i}$ is continuous as well. 
	
Putting the above two facts together, we conclude that there exists a neighborhood $\cU_h$ of $h$ in $\cM_{k,i}(\cN_g) $  so that for all $h'\in \cU_h$,  the principal flow lines $\gamma_{i,0}^{h'}$ do not intersect $W_k^s(s_i,h')$. Hence $\cM_{k,i}(\cN_g)$ is open.

\vspace{.2cm}
{\em $\cM_{k,i}(\cN_g)$ is dense in $\cN_g$:}
We will show that for any element $h \in \cN_g$, there exists an element $h' \in \cM_{k,i}$ arbitrarily close to $h$.
If $h \in \cM_{k,i}$, there is nothing to prove. Hence assume, to fix ideas, that the positive principal flow line $\gamma^h_{i,0}$ intersects $\cup_{j=1}^l W_n^s(s_j,h)$. 
	
 Using the local change of variables $\varphi_h: (U,p_i) \to (\R^n,0)$ afforded by Hartman's 
 Theorem (Theorem~\ref{th:hartman}) around the maximum $p_i$, the system follows the dynamics $\dot z =\Lambda z$ and after potentially another linear change of variables, we can assume that $\Lambda=\diag(\lambda_1,\ldots,\lambda_n)$, with $\lambda_1<\lambda_2 \leq \cdots \leq \lambda_n <0$ the eigenvalues of $H^h(p)$. From Eq.~\eqref{eq:defBp}, we know that $B_{p_i}$ in the $z$ coordinates is a ball $B_r(0)$ of given radius $r>0$ and centered at $0$.
	
 Denote by $\gamma^+_0$ the segment $(te_1,0,\ldots,0) \in \R^n$, $0 \leq t\leq r$. It is a compact subset of the (positive) principal flow line of $\grad^hF$ at $p_i$.  Let $z_0 = (r/2,0,\ldots,0)$. Since $z_0$ is not a critical point of $\grad^hF$, by the flowbox theorem~\cite[p.~93]{palis1982geometric}, we know there exists a neighborhood $U_0$ of $z_0$, which we take to be included in the ball of radius $r/2$ around $z_0$, and a local diffeomorphism $\varphi_0:U_0 \to \R^n$ under which the dynamics is, in the new variables induced by $\varphi_0$ (which we denote by $y$) given by 
	 \begin{equation}\label{eq:sysyflowbox}\dot y= (1,0,\ldots, 0).\end{equation} 
	 Without loss of generality, we can assume that $\varphi_0(z_0)=(r/2,0,\ldots,0)=:y_0$. See Fig.~\ref{fig:flowK} for an illustration.
	
	Working in the $y$-coordinates, let $K$ be a box (unit ball for $\| \cdot \|_\infty$ norm) centered at $y_0$ and of width $0<r'<r/2$ small enough so that $\varphi_0^{-1}(K)\subset B_r$.  
		  For any $\tilde h \in \cM$ which agrees with $h$ outside of $K$, because $\grad^hF$ and $\grad^{\tilde h}F$ then also agree outside of $M-B_r$ and this set is invariant under the flow $-\grad^h F$ by Lemma~\ref{lem:contprincipal}, we have that  \begin{equation}\label{eq:hhbarWs}W^s_k(s_j,\tilde h )\cap \partial B_r =W^s_k(s_j,h)\cap \partial B_r.\end{equation}
	  
 Let $y_1:=y_0-(r'/2,0,\ldots,0) \in \partial K$. Let $\theta:M \to \R$ be a smooth positive function with support $K$ and such that $$\int_{0}^{r'} \theta(y_1+te_1) dt=1.$$ Now define the following smooth vector field with support in $K$: for $v \in \R^n$, $$Y_v(y) := \theta(y)v.$$
Let $\phi^{v}_t(y_1)$ be the solution at time $t$ of the Cauchy problem \begin{equation}\label{eq:caucy0}\dot y =  \grad^hF(y)+Y_v(y), y(0)=y_1,\end{equation}

To proceed, we show that we can always find a metric for which the vector field in Eq.~\eqref{eq:caucy0} is the gradient of $F$:

\begin{lemma}\label{lem:yvmetricgrad}
	For $\delta>0$ small enough, there exists a metric-valued function $h_v$ for all $v\in \R^n$ with $\|v\| < \delta$, depending continuously on $v$, agreeing with $h$ outside of $K$, so that $$\grad^{h_v}F(y) = \grad^h F(y)+ Y_v(y).$$
\end{lemma}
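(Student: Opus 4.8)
The plan is to work in the flow-box coordinates $y$ afforded by $\varphi_0$, in which $\grad^h F$ reads $\dot y = e_1$. Let $G(y)$ denote the matrix of $h$ in these coordinates and $\nabla F$ the ordinary coordinate gradient of $F$. The defining relation $h(\grad^h F,\cdot)=dF(\cdot)$ becomes $\grad^h F = G(y)^{-1}\nabla F$, and since $\grad^h F=e_1$ here this gives $\nabla F(y)=G(y)e_1$. Hence any metric $h_v$ with matrix $G_v(y)$ satisfies $\grad^{h_v}F=G_v(y)^{-1}\nabla F=G_v(y)^{-1}\big(G(y)e_1\big)$, so, recalling $Y_v(y)=\theta(y)v$, the desired identity $\grad^{h_v}F=\grad^h F+Y_v$ is equivalent to
\begin{equation}
G_v(y)\,w(y)=G(y)e_1,\qquad w(y):=e_1+\theta(y)v.
\end{equation}
It therefore suffices to produce, for $\|v\|$ small, a $C^k$ field $y\mapsto G_v(y)$ of symmetric positive definite matrices that solves this pointwise linear equation, equals $G(y)$ wherever $\theta(y)=0$, and depends continuously on $v$; extending by $h$ off $K$ then yields a metric on all of $M$.

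I would build $G_v$ as a small symmetric correction of $G$. Set $G_v(y):=G(y)+E(y,v)$, where $E$ is symmetric and solves $E(y,v)\,w(y)=b(y,v)$ with $b(y,v):=G(y)e_1-G(y)w(y)=-\theta(y)G(y)v$; then automatically $G_v w=Gw+b=Ge_1$. An explicit symmetric solution is
\begin{equation}
E=\frac{b\,w^\top+w\,b^\top}{\|w\|^2}-\frac{(w^\top b)\,w\,w^\top}{\|w\|^4},
\end{equation}
as one checks by right-multiplication by $w$. Two features make the patching work: $E$ is linear in $b$, and $b=-\theta Gv$, so $E(y,v)=\theta(y)\,\widetilde E(y,v)$ for a map $\widetilde E$ that is $C^k$ in $y$ and smooth in $v$ as long as $\|w(y)\|$ is bounded away from $0$ — which holds for $\|v\|$ small, since $w=e_1+\theta v$. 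Consequently $E$ and all of its $y$-derivatives vanish wherever $\theta$ and its derivatives vanish, so $G_v$ glues with $G$ into a $C^k$ symmetric $2$-tensor on $M$ equal to $h$ outside $K$; moreover $G_v$ is smooth, hence continuous, in $v$, with $G_v\to G$ as $v\to0$.

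It remains to fix $\delta$ and verify positive definiteness. On the compact set $\overline{\varphi_0^{-1}(K)}$ there is $c>0$ with $G(y)\succeq cI$, and $\|G\|_\infty,\|\theta\|_\infty<\infty$, so $\|b(y,v)\|\le\|\theta\|_\infty\|G\|_\infty\|v\|$ and $\|w(y)\|\ge1-\|\theta\|_\infty\|v\|\ge\tfrac12$ once $\delta$ is small; hence $\|E(y,v)\|\le C\|v\|$ for a uniform constant $C$. Taking $\delta\le c/(2C)$ (and small enough for the preceding bounds) forces $G_v(y)\succeq\tfrac{c}{2}I\succ0$ on $K$, while $G_v=G$ is positive definite off $K$. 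Letting $h_v$ be the metric with matrix $G_v$ in the $y$-coordinates and $h$ elsewhere completes the proof, since by construction $\grad^{h_v}F=w=\grad^h F+Y_v$. The one step needing genuine care — and the one that dictates the form of the argument — is arranging \emph{simultaneously} that (i) the constraint $G_v w=Ge_1$ is met by a symmetric matrix, (ii) the correction stays small enough to keep $G_v$ positive definite, and (iii) the correction vanishes to infinite order where $\theta$ does, so that $h_v$ is a bona fide metric on $M$; a standalone closed form for a positive definite $P$ with $Pw=Ge_1$ would fail (iii), which is why the construction is phrased as a perturbation of $G$.
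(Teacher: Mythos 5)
Your argument is correct, but it takes a genuinely different route from the paper's. The paper never touches the matrix of the metric directly: it first notes that, since $K$ contains no critical points of $F$, one has $dF\cdot Z_v>0$ on $K$ for $\|v\|$ small (where $Z_v:=\grad^h F+Y_v$), so the tangent space splits as $T_yM=\Span\{Z_v(y)\}\oplus\ker dF(y)$. It then \emph{defines} $h_v$ to make this splitting orthogonal, to equal $h$ on $\ker dF$, and to satisfy $h_v(Z_v,Z_v):=dF\cdot Z_v$; a two-line computation with a test field $W=a_1Z_v+W_h$ (with $W_h\in\ker dF$) shows $\grad^{h_v}F=Z_v$. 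In that construction positive-definiteness is structural (it holds as soon as $dF\cdot Z_v>0$), and agreement with $h$ off $K$ is immediate because there $Z_v=\grad^h F$ and the $Z_v\oplus\ker dF$ splitting is already $h$-orthogonal, with $h(\grad^hF,\grad^hF)=dF\cdot\grad^hF$. Your approach instead fixes the flow-box coordinates, turns the requirement $\grad^{h_v}F=\grad^hF+Y_v$ into the pointwise linear matrix constraint $G_v w=Ge_1$ with $w=e_1+\theta v$, and produces an explicit symmetric rank-two perturbation $E$ solving $Ew=b$; positive-definiteness and the smooth gluing are then obtained by separate estimates, using that $E$ is linear in $b=-\theta Gv$ and hence factors as $\theta\widetilde E$. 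Both proofs are sound and exploit the same underlying fact (the condition $\grad^{h_v}F=Z_v$ is one linear constraint on a metric, wildly underdetermined). The paper's splitting argument is coordinate-free and makes positive-definiteness and the equalities $h_0=h$, $h_v|_{M-K}=h$ automatic; your matrix perturbation is more explicit and makes the linear-algebraic structure of the freedom transparent, at the cost of the norm estimate $\|E\|\le C\|v\|$ needed to keep $G_v$ positive definite.
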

\begin{proof}

Because $K$ does not contain any critical points of $F$, we have that  $dF\cdot \grad^h F >0$. Thus, for $\delta$ small enough,  we have that $dF \cdot (\grad^h F+Y_v) >0$ for all $v$ with $\|v\|<\delta$, $y \in K$. Set $Z_v:=\grad^h F+Y_v$.
 
From the above, we can decompose the tangent space $T_y M = \Span Z_v(y) \oplus \ker dF(y)$ for $y \in K$. We now introduce a metric for which this decomposition of the tangent space is orthogonal. In coordinates, it has the matrix expression $$h_v:=\begin{pmatrix} dF\cdot Z_v & 0 \\0 & h_{|\ker dF} \end{pmatrix},$$ where $h_{|\ker dF}$ is the restriction of $h$ to the $n-1$ dimensional subspace $\ker dF$ (precisely, the matrix expression for $h_v$ is in the basis $\{Z_v, Z_1,\ldots, Z_{n-1}\}$ where the $Z_i$ are any independent system spanning $\ker dF$. In particular, note that $h_v(Z_v,Z_v)=dF\cdot Z_v$ and $h_v(Z_v,Z_i)=0$, $1 \leq i \leq n-1$).

The above construction is such that $h_v$ depends continuously on $v$, $h_0=h$ and $h_v=h$ in $M-K$. Finally, we show that $\grad^{h_v} F = Z_v$. To this end, let $W$ be an arbitrary vector field; we can decompose it uniquely as $W=a_1Z_v + W_h$, where $W_h \in \ker dF$, $a_1 \in \R$. We then have \begin{align}dF \cdot W &= dF \cdot (a_1Z_v + W_h)\\ &= a_1 dF \cdot Z_v = a_1 h_v(Z_v,Z_v) 
\\&= h_v(a_1Z_v+W_h,Z_v)=h_v(W,Z_v),  \end{align} which concludes the proof.
\end{proof}

Now introduce the flow map of~\eqref{eq:caucy0} \begin{equation}\label{eq:defPhi}\Phi:\R^n \to M:v \mapsto \Phi(v):=\phi^{v}_{r'}(y_1).\end{equation}	 Then, recalling that $\nabla^hF=(1,0\ldots0)$ in $K$, we see that $\Phi(0)=y_1+(r',0,\ldots,0)=:y_2 \in \partial K.$
Furthermore, we have the following Lemma:
	  
	  \begin{figure}[t]\centering
	  \includegraphics[width=.45\columnwidth]{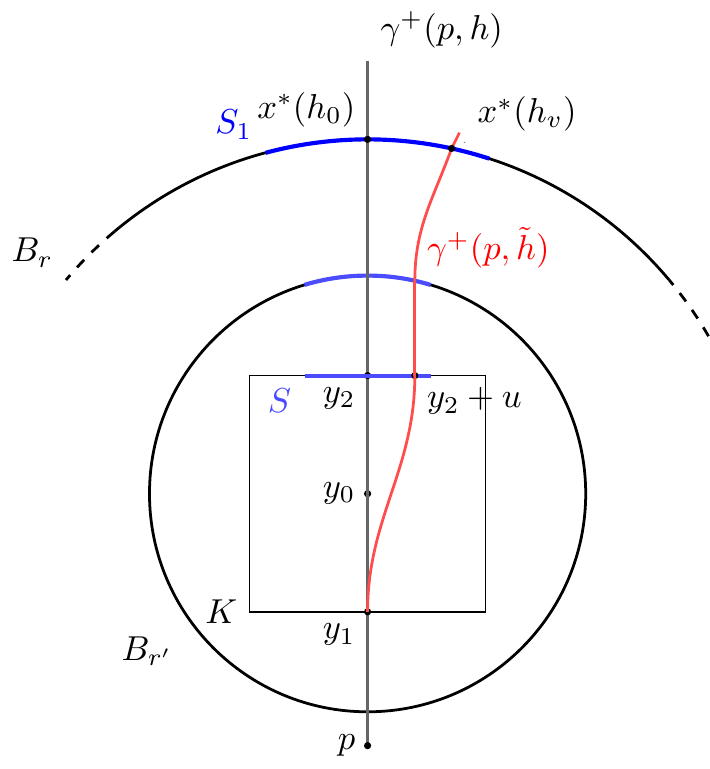}
	  \caption{\small The gradient flow inside $B_{r'}$ goes along vertical lines. The principal flow line for $h$, and for any metric agreeing with $h$ outside of $K$, contains the segment $(p,y_1)$. Changing the metric only inside $K$, we can make the corresponding principal flow line go through $y_2+u$, for any small $u$. The set of realizable intersections of top face of $K$ and principal flow lines (by changing the metric to $h'$ inside $K$) is denoted by $S$. Since $S$ is transversal to $\grad^h F$ (and thus to $\grad^{h'}F$, since $h$ and $h'$ agree outside of $K$), its image under the gradient flow intersects $\partial B_r$ to yield $S'$ containing an open set around $x^*(h_0)$. }
	  \label{fig:flowK}	
	  \end{figure}
\begin{lemma}\label{lem:philocallysurjective}
The map $\Phi$ defined in Eq.~\eqref{eq:defPhi} is locally surjective around $0$.	
\end{lemma}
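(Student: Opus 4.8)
The plan is to show that $\Phi$ is a submersion at $v=0$, which immediately gives local surjectivity by the submersion theorem (or by the inverse/implicit function theorem applied to a local section). Since $\Phi(v) = \phi^v_{r'}(y_1)$ is the time-$r'$ flow of the perturbed equation $\dot y = \grad^h F(y) + Y_v(y)$ started at $y_1$, the natural tool is the variation-of-parameters (Duhamel) formula for the dependence of a flow on a parameter entering the vector field. Concretely, I would differentiate $\Phi$ at $v=0$ in the direction $u \in \R^n$: writing $\eta_t := \frac{\partial}{\partial v}\big|_{v=0}\phi^v_t(y_1)\, u$, the function $\eta_t$ solves the linear variational equation $\dot\eta_t = D_y\big(\grad^h F\big)(\phi^0_t(y_1))\,\eta_t + \theta(\phi^0_t(y_1))\,u$ with $\eta_0 = 0$. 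Recalling that in the flowbox coordinates $y$ we have $\grad^h F \equiv e_1$ on $K$, the Jacobian $D_y(\grad^h F)$ vanishes on the relevant segment, so the variational equation collapses to $\dot\eta_t = \theta(\phi^0_t(y_1))\,u = \theta(y_1 + t e_1)\,u$. Integrating from $0$ to $r'$ and using the normalization $\int_0^{r'}\theta(y_1 + t e_1)\,dt = 1$ chosen just above, we get $\eta_{r'} = u$. Hence $D\Phi(0) = \mathrm{Id}_{\R^n}$, so $\Phi$ is a local diffeomorphism near $0$, and in particular locally surjective onto a neighborhood of $y_2 = \Phi(0)$.

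One should be slightly careful that this computation takes place inside $K$, i.e.\ that for $\|v\|$ small the whole trajectory $\{\phi^v_t(y_1) : 0\le t\le r'\}$ stays inside $K$ (where the flowbox form $\grad^h F = e_1$ and the support condition on $\theta$ both hold). For $v=0$ the trajectory is exactly the segment from $y_1$ to $y_2$, which lies in $\partial K \cup \mathrm{int}\,K$; since $Y_v$ is supported in $K$ and $\grad^h F = e_1$ there, the perturbation only nudges the trajectory within a $c\|v\|$-neighborhood of this segment over the bounded time interval $[0,r']$, by Gr\"onwall. Shrinking $\delta$ (from Lemma~\ref{lem:yvmetricgrad}) if necessary so that this neighborhood stays in $K$, the derivation above is valid, and differentiability of $v \mapsto \phi^v_{r'}(y_1)$ is the standard smooth dependence of ODE solutions on parameters.

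The only mildly delicate point — and the one I would flag as the crux — is the bookkeeping that the perturbed field is \emph{still a gradient field of $F$ for some metric}; but that is precisely what Lemma~\ref{lem:yvmetricgrad} provides, so here it suffices to use that $\Phi$ is well-defined and smooth on $\{\|v\|<\delta\}$ and that its derivative at $0$ is the identity. Everything else is the routine Duhamel computation together with the clever choice of $\theta$ (whose integral along the $e_1$-direction is $1$), which was engineered exactly so that $D\Phi(0)$ comes out to be $\mathrm{Id}$ rather than some harder-to-invert matrix. I would therefore present the proof as: (i) smooth dependence on $v$ and trajectory confinement in $K$; (ii) the variational equation and its simplification using $\grad^h F \equiv e_1$; (iii) integration and the normalization of $\theta$ giving $D\Phi(0)=\mathrm{Id}$; (iv) conclude local surjectivity.
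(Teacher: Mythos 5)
Your proof is correct and follows essentially the same approach as the paper: both compute $D\Phi(0)$ via the variation-of-parameters formula, exploit the flowbox coordinates (so the linear part of the variational equation drops out and the flow linearization is the identity), and then invoke the normalization $\int_0^{r'}\theta(y_1+te_1)\,dt=1$ to obtain $D\Phi(0)=\mathrm{Id}$. The paper simply quotes the perturbation formula from Abraham--Robbin rather than rederiving the variational equation, and does not spell out the Gr\"onwall confinement remark, but the underlying argument is the same.
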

\begin{proof}
We prove the statement by showing that the linearization of $\Phi$ around $0$ is surjective. Denote by $w(t)=y_1+te_1$ the solution of~\eqref{eq:caucy0} with $v=0$. It is clear that $w(t)$ is a segment of the positive principal flow line of $\grad^h F$ at $p_i$, and that $w(0)=y_1=((r-r')/2,0,\ldots,0)$ and $w(r')=((r+r')/2,0,\ldots,0)=y_2$. Recall the perturbation formula~\cite[Sec. 32]{abraham1967transversal} 
	\begin{equation}\label{eq:variation}
	\frac{d}{d\eta}|_{\eta=0} \Phi(\eta v) = \int_0^{r'} Y_{v}(w(r'-s))ds.
	\end{equation}
In particular, the right-hand side depends on the value of $Y_v$ along $w$ {\it only} and, by construction, is equal to $v$. This proves that $\Phi$ is locally surjective as claimed.
\end{proof}

To conclude the proof, we show for any $\delta>0$, we we can find  $v$ with $\|v\|<\delta$ so that the gradient of $F$ for $h_v$ is simple. Since $h_0=h$ and $h_v$ is continuous in $v$, this shows that there exists metric arbitrarily close to $h$ for which the gradient of $F$ is simple.	

As above, let $\cU_i \ni h$ be  the open neighborhood of $h$ from Lemma~\ref{lem:contprincipal}. By perhaps decreasing $\delta$, we can ensure that $h_v \in \cU_i$ for all $v$ with $\|v\| < \delta$ (since $h_v$ depends continuously on $v$, and $h_0=h$.)
	
	Denote by $x^*(h_v) \in \partial B_r$ the point of intersection of $\partial B_r$ and $\gamma(p_i,h_v)$ (the intersection is not empty per Lemma~\ref{lem:contprincipal}). Then for each $\|v\|<\delta$, $x^*(h_v)$ is on the same flow line as $y_1$ since $h_v$ agrees with $h$ outside of $K$. Because $\Phi$ is locally surjective around $0$, appealing to the inverse function theorem, we can find, for $\delta$ and  $\delta_1$ small enough, a continuous function $\Phi^{-1}: u \mapsto v$ so that $\Phi(v)=u$, for all $u$ with $\|u\|<\delta_1$. Let $S \in \partial K$ be the subset of the 'top face' defined as   $$S:=\{y_2+u \mid \|u\|<\delta_1 \mbox{ and } e_1^\top u =0 \}.$$  Note that $S$ and $\nabla^{h}F$ are transversal by construction.

To make the notation simpler, we set $v:=\Phi^{-1}(u)$.  The principal flow line of $\grad^{h_{v}}$ intersects $S$ at $y_2+u$: every point in $S$ can thus be made to belong to a principal flow line of a $\grad^{h_v}F$ for an appropriate $v$. Using again the fact that $h_{v}$ agrees with $h_0=h$ outside of $K$, we see that 
$$e^{-{[0,\infty)}\grad^{h_{v}} F}(S)=e^{-{[0,\infty)}\grad^{h} F}(S),$$
  and thus  $S_1:=e^{-{[0,\infty)}\grad^{h_{v}} F}(S) \bigcap \partial B_r$ contains an open set around $x^*(h_0)$. 
    Hence, for any $x_1^* \in \partial B_r$ near $x^*(h_0)$, we can find a $v$ so that the principal flow line of $\grad^{h_v}F$ goes through $x_1^*$. Finally, since $W_n(p_i,h) = W_n(p_i,h_v)$ and $W_n(p_i,h) \cap \partial B_{r}$ is closed, there exists  $x_1^* \in \partial B_r$ arbitrarily close to $x^*(h)$---and thus a $v$ arbitrarily small---so that the principal flow line of $\grad^{h_v}F$ does not belong  to $W_n(p_i, h_v) \cap \partial B_r$ and thus does not belong to $W_n(p_i, h_v)$.  This concludes the proof.
    \end{proof}

\section{Summary and outlook: max-min graphs}

\subsection{Max-min graphs} From the main result of the paper, we see that given a smooth $n$-dimensional closed manifold $M$,  to any generic pair $(F,g) \in C^\infty(M) \times \cM$, there is a naturally assigned bipartite graph $G=(V,E)$, which we call {\em max-min graph} of $(F,g)$
\begin{definition}[Max-min graph of $(F,g)$] The max-min graph of a generic pair $(F,g) \in C^\infty(M) \times \cM$ is the bipartite graph $G=(V,E)$ with $V=\crit_0(F) \cup \crit_n(F)$ and  

$$ E = \{(p_i,m_-(p_i)),(p_i,m_+(p_i))\mid p_i \crit_n(F) \}.$$
\end{definition}

The set of possible max-min graphs for generic gradient vector fields for $n=1$ is easily seen to depend on the topology of $M$, and can be  completely characterized: denote by $p_i$ and $m_j$ the elements of $\crit_n(F)$ and $\crit_0(F)$, respectively. Denote by $H_1(M)$ is the first homology group of $M$ which, since $\dim M=1$ and $M$ is connected, has rank either $0$ or $1$. Recall that if $M=\R^n$, we assume that $\lim_{\|x\| \to \infty} F(x)=\infty$ and $F$ has a finite number of critical points. We have (see Fig.~\ref{fig:figoneD} for an illustration)

\begin{figure}[t]
\centering
\includegraphics[width=.8\columnwidth]{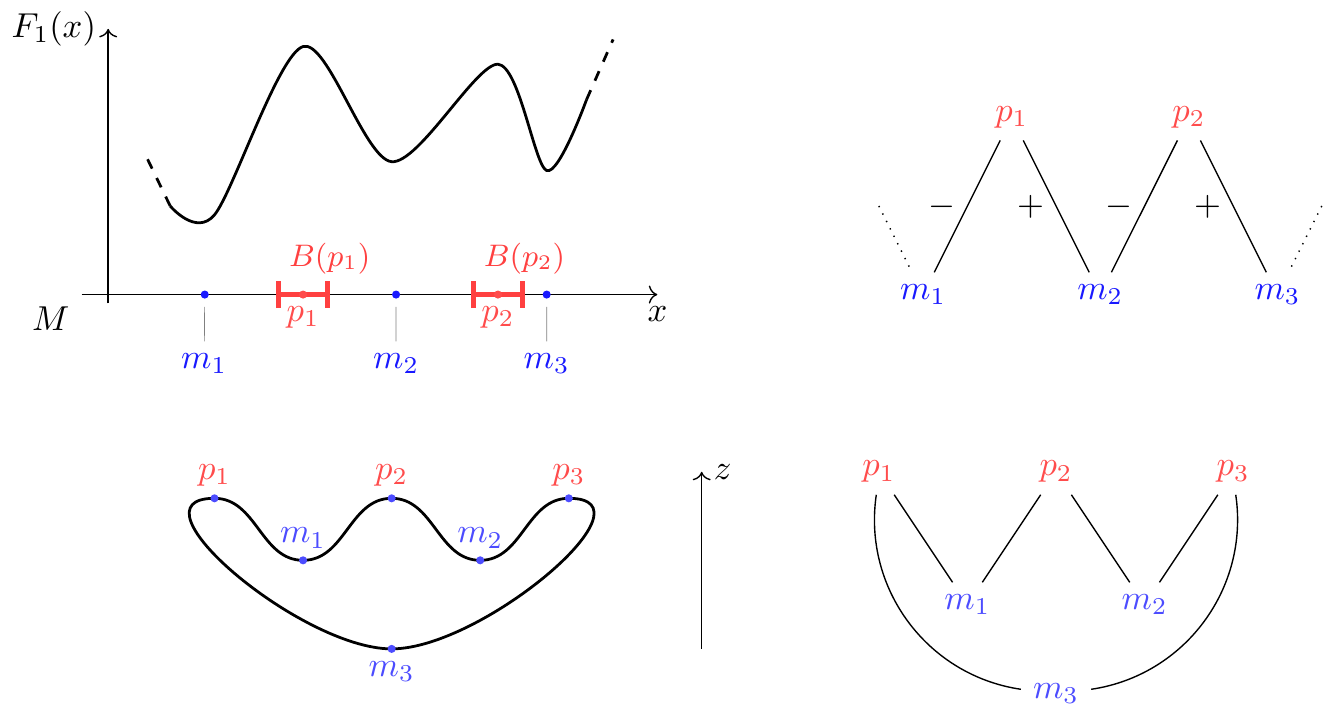}
\caption{\small {\it Top Left:} A Morse function on $M=\R$. To each maximum $p_i$, we can assign two minima $m_-(p), m_+(p)$ so that  gradient descent for $F_1$ initialized in $B(p)$ converges to either $m_-(p)$ or $m_+(p)$. {\it Top Right:} The flow graph of the gradient of $F_1$. Each minimum has degree two and maxima have degrees one or two. {\em Bottom Left:} A circle is embedded in the plane with vertical axis $z$ and we consider the Morse function $F_2(x)=z$ (height function). It has three maxima and three minima. {\em Bottom Right:} The flow graph of the gradient of $F_2$. All critical points have degree two.}	
\label{fig:figoneD}
\end{figure}

\begin{proposition}[Max-min graphs for $\dim M= 1$]\label{prop:casedim1}
Assume $\dim M =1$, then
\begin{enumerate}
\item {\em case $\operatorname{rank} H_1(M)=0$:}  $k=:|\crit_0(F)|=|\crit_n(F)|+1$ and there exists an ordering of $p_i$, $m_j$ so that $$E=\cup_{i=1}^{k}\{(p_i,m_i),(p_i,m_{i+1})\}.$$ Thus $\deg(p_i)=2$ for $p_i \in \crit_n(F)$.
\item {\em case $\operatorname{rank} H_1(M)=1$:} then $k=:|\crit_n(F)|=|\crit_0(F)|$ and there exists an ordering of $p_i$, $m_j$ so that $$E=\cup_{i=1}^{k}\{(m_i,p_i),(m_i,p_{i+1\!\! \mod k})\}.$$ Thus $\deg(m_i)=\deg(p_i)=2$ for $m_i,p_i \in V$.
\end{enumerate}
\end{proposition}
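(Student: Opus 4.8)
\textbf{Proof plan for Proposition~\ref{prop:casedim1}.}

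The plan is to reduce everything to the structure of a generic Morse function on a connected one-manifold, which is either $\R$ (rank $H_1=0$) or the circle $S^1$ (rank $H_1=1$), and then describe the gradient dynamics explicitly. First I would set up the combinatorial picture: in dimension one a Morse function has only maxima (index $1$) and minima (index $0$), which alternate along the manifold. On $\R$ the critical points are finite and, because $F(x)\to\infty$ at both ends (or $M$ is compact), the outermost critical points must be minima; hence the critical points read off in order as $m_1,p_1,m_2,p_2,\ldots,p_{k-1},m_k$, giving $|\crit_0 F| = |\crit_n F| + 1$. On $S^1$ the cyclic order forces equal numbers of maxima and minima, $m_1,p_1,m_2,p_2,\ldots,m_k,p_k$ read cyclically, so $|\crit_0 F| = |\crit_n F| = k$.

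Next I would identify the principal flow lines and the associated minima. In dimension one the linearization $H^g(p)$ at a maximum is a single negative scalar, so its "smallest eigenvalue'' is that scalar with multiplicity one; the genericity hypothesis of simplicity (Proposition~\ref{prop:main2}) is automatic here except for the requirement that the two principal flow lines land in stable manifolds of minima. But in dimension one there are no saddles, so a gradient ascent trajectory leaving a maximum $p_i$ can only limit to a minimum (it cannot limit to another maximum, as those are sources for the ascent flow, nor fail to converge since $F$ is Morse on a compact manifold / proper on $\R$). Thus every pair $(F,g)$ with $F$ Morse is already simple in dimension one. The two principal flow lines at $p_i$ are simply the two ascent trajectories leaving $p_i$ to its left and to its right; by the alternation of critical points, the left one descends (under $-\grad F$) into the minimum immediately to the left of $p_i$ and the right one into the minimum immediately to the right. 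Concretely, with the ordering above, $m_-(p_i) = m_i$ and $m_+(p_i) = m_{i+1}$ on $\R$, and $m_-(p_i) = m_i$, $m_+(p_i) = m_{i+1 \bmod k}$ on $S^1$. Substituting into the definition of the max-min graph gives exactly the claimed edge sets, and reading off degrees: on $\R$ each $p_i$ has degree $2$ (the interior minima have degree $2$ and the two end minima $m_1, m_k$ have degree $1$); on $S^1$ every vertex has degree $2$.

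The only genuinely substantive point — and the one I would write most carefully — is the claim that the left and right ascent trajectories from $p_i$ actually reach the \emph{adjacent} minima, i.e.\ that no trajectory "skips over'' a critical point. This is immediate once one observes that between two consecutive critical points $F$ is strictly monotone, so the ascent flow there is monotone and its $\omega$-limit is the next critical point in the uphill direction while its $\alpha$-limit is the next one in the downhill direction; since a maximum is the uphill end of both adjacent monotone arcs, the descent flow from either side of $p_i$ converges to the minimum bounding that arc from below. Everything else (the count of critical points, the alternation, the degree bookkeeping) is elementary. I do not expect any real obstacle; the proposition is essentially a normal-form statement for gradient dynamics in dimension one, and the main result of the paper is invoked only to guarantee that the generic $(F,g)$ to which the max-min graph is assigned is indeed simple — which, as noted, holds here for every Morse $F$.
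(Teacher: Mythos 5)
Your proof is correct and takes essentially the same approach as the paper: the paper's proof is a short remark listing the same three facts you exploit (absence of saddles in dimension one, linear/cyclic ordering of critical points, alternation of maxima and minima), and your argument simply fleshes out those facts, including the slightly more careful observation that strict monotonicity of $F$ between consecutive critical points forces the principal flow lines to land at the \emph{adjacent} minima.
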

The proof of the proposition is an immediate consequence of the following facts: (1)  $F$ is generically Morse (and thus does not have saddle points if $\dim M =1$); (2)  the critical points of $F$ can in this case be given a cyclic (if $\operatorname{rank} H_1(M)=1$) or linear (if $\operatorname{rank} H_1(M)=0$) order and (3) maxima and minima of $F$ appear alternatively in this order.

\subsection{Realizable max-min graphs and topology of $M$} This leads us to the following:

\vspace{.1cm}
\noindent{\bf Open problem}: what kind of bipartite graphs can be max-min graphs of a pair $(F,g)$ over $M$?
\vspace{.1cm}

To address this problem, we call an {\em abstract max-min} graph any simple bipartite graph $G=(V_0 \cup V_1, E)$ where 
\begin{enumerate}
\item $|V_0|\geq 1$, $|V_1|\geq 1$
\item 	$1 \leq \deg(p) \leq 2$ for all $p \in V_1$
\end{enumerate}
We think of $V_0$ as the set of minima and $V_1$ as the set of maxima. We say that a pair $(F,g)$  {\em realizes} $G$ on $M$ with the max-min graph of $\nabla^g F$ is equal to $G$.

The set of abstract max-min graphs that can be realized depends on the topology of $M$, as was clear in the case $\dim M=1$ described in Prop.~\ref{prop:casedim1}. We can also easily realize max-min graphs with a single node in $V_0$ and an arbitrary number of nodes in $V_1$, by generalizing the construction of Fig.~\ref{fig:figexD} to add more maxima. These yield max-min graphs where the degree of elements in $V_1$ is one and the degree of the element in $V_0$ is unbounded. Reciprocally, we can have functions with a single node in $V_1$ and an arbitrary number of nodes in $V_0$. For example, it suffices to consider the negative of the height function for the embedded sphere in Fig.~\ref{fig:figexD}. From this particular example, we also conclude that flow graphs can be disconnected: since $|V_0|=3$ and $|V_1|=1$ and the degree of the node in $V_1$ is at most 2, at least one node in $V_0$ has no incident edges. Furthermore, we see that  reversing the direction of the gradient flow (i.e., considering the gradient ascent flow of $F$ instead of the gradient descent flow), does {\em not} yield an automorphism of the corresponding flow graphs: indeed, while the elements of $V_0$ become the elements of $V_1$ and vice-versa, the edge sets of the two flow graphs do not even necessarily have the same cardinality. Finally, it should be clear that none of the examples described in the paragraph could be realized over a state-space $M$ of dimension $1$. The above leads to the question of how can one realize an abstract max-min graph, and what restriction on the topology of the underlying state-space is imposed. We will address these questions, and others, in a forthcoming publication.

\begin{figure}[t]\centering
\includegraphics[width=.5\columnwidth]{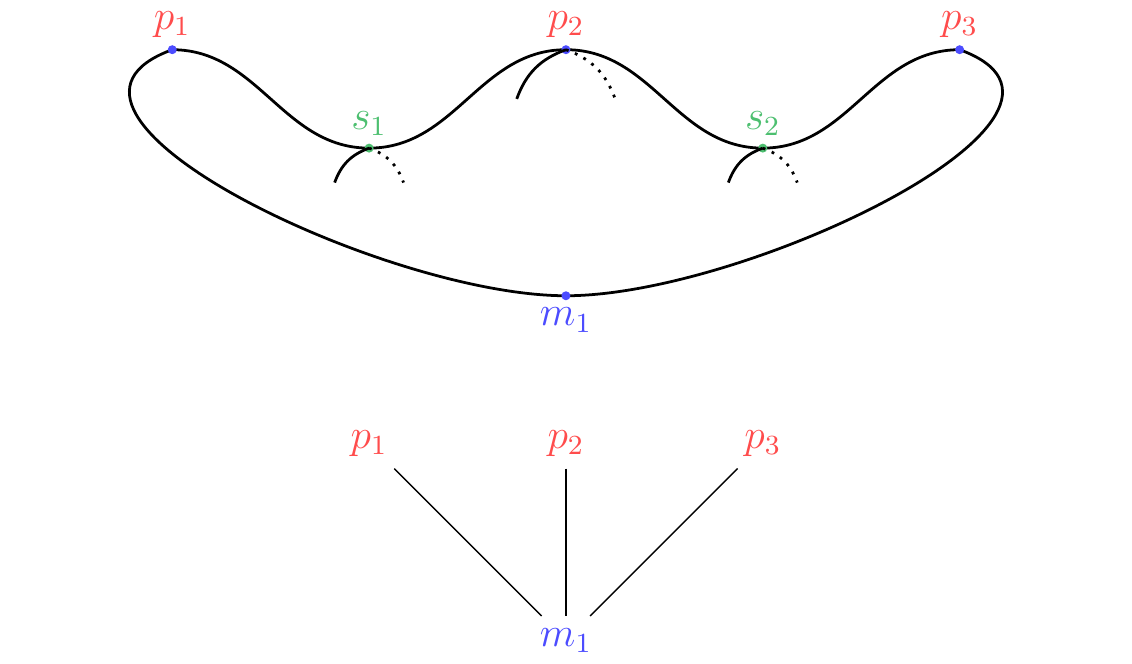}
\caption{\small {\it Top:} We consider the height functions of an embedded sphere in $\R^3$. The function  has three maxima $p_1,p_2,p_3$, two saddle points $s_1,s_2$ and a minimum $m_1$. {\it Bottom:} Max-min graph of the gradient of the height function of the embedded sphere.}	 
\label{fig:figexD}
\end{figure}
\subsection{Summary}

Let $M$ be a smooth closed manifold and $(F,g)$ a generic pair where $F$ is a smooth function and $g$ a Riemannian metric on $M$. We have shown in this paper that to each maximum $p$ of $F$, we can assign two minima---denoted $m_-(p),m_+(p)$---having the following property: the gradient flow of $F$ initialized close enough to $p$ converges with high-probability to the set $\{m_-(p), m_+(p)\}$. In order to prove the result, we introduced the notion of {\it principal flow lines} of a maximum. When  the linearization of the gradient flow around  $p$ has a smallest eigenvalue of algebraic multiplicity one, we showed the existence of exactly two flow lines of the gradient ascent flow that reach $p$ tangentially to the corresponding eigenspace. These are the principal flow lines of $p$. If they belong to the stable manifolds of minima of $F$, we call the corresponding gradient vector field {\it simple}. We then showed in a first part that for simple gradients, most of the volume of any small ball containing at maximum $p$ belongs to the union of the two stable manifolds to which principal flow lines belong.  In a second part, we showed that simple gradient vector fields are generic. 

The proof of the first part is local in nature, with the exception of the reliance on the global stable manifold decomposition theorem. The $C^1$ linearization result of Hartman~\cite{hartman1960mex} plays an important role there, and we note that it holds only if all eigenvalues of the linearized gradient vector field  have real parts of the same sign. This result thus cannot be used at a saddle point of $F$. We also point out that the topological equivalence provided by the Hartman-Grobman theorem, which can be applied at any hyperbolic fixed point, is not sufficient to obtain our result.   The second part of the proof shows that generically for $(F,g)$, the linearization of the gradient flow at a maximum has a smallest eigenvalue of multiplicity one, and  the  corresponding principal flow lines   belong to the stable manifolds of some minima. The proof that the linearization of the gradient vector field at $p$ has a unique smallest eigenvalue relies on transversality arguments. The proof that the principal flow lines belong to stable manifolds of minima goes by showing that the property holds for an increasing sequence of compact subsets of the stable manifolds, and appealing to Baire theorem.
Finally, we introduced the notion of max-min graph graph of a generic pair $(F,g)$, and described some of its properties along with open questions.
\section{Appendix}

\begin{lemma}\label{lem:techmatrix1}
Let $A,B \in \R^{n \times n}$ be positive definite matrices so that $AB$ has repeated eigenvalues. Then for any $\varepsilon>0$, there exists a positive definite $Q$, with $\|Q\|<\varepsilon$ and $(A+Q)B$ has distinct eigenvalues.
\end{lemma}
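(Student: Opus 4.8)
The plan is to reduce the statement to an explicit one-parameter perturbation of a diagonal matrix. First I would use positive definiteness of $A$ to symmetrize the product: writing $A+Q = A^{1/2}(I+R)A^{1/2}$ with $R:=A^{-1/2}QA^{-1/2}$, one has $(A+Q)B = A^{1/2}\,(I+R)C\,A^{-1/2}$ with $C:=A^{1/2}BA^{1/2}$, so $(A+Q)B$ is similar to $(I+R)C$. Since this similarity is by a fixed invertible matrix, $R$ is symmetric positive definite if and only if $Q$ is, and $\|Q\|\le \lambda_{\max}(A)\,\|R\|$; thus it suffices to find a small positive definite symmetric $R$ making $(I+R)C$ have distinct eigenvalues. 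Next, since $C$ is symmetric positive definite, write $C=U\Lambda U^\top$ with $U$ orthogonal and $\Lambda=\diag(\lambda_1,\dots,\lambda_n)$, $\lambda_i>0$ (these are the eigenvalues of $AB$). Conjugating by $U^\top$ turns $(I+R)C$ into $(I+\tilde R)\Lambda$ with $\tilde R:=U^\top R U$, again of the same norm and positive definite if and only if $R$ is. So the problem becomes: given $\varepsilon'>0$, produce a symmetric positive definite $\tilde R$ with $\|\tilde R\|<\varepsilon'$ such that $(I+\tilde R)\Lambda$ has $n$ distinct eigenvalues.

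For the construction I would take $\tilde R = \varepsilon\,\diag(1,2,\dots,n)$, which is positive definite with norm $n\varepsilon$. Then $(I+\tilde R)\Lambda=\diag\big((1+i\varepsilon)\lambda_i\big)$ is already diagonal, so its eigenvalues are $\mu_i(\varepsilon):=(1+i\varepsilon)\lambda_i$. For each pair $i<j$ the equation $\mu_i(\varepsilon)=\mu_j(\varepsilon)$ is affine in $\varepsilon$ with leading coefficient $i\lambda_i-j\lambda_j$ and constant term $\lambda_i-\lambda_j$; it is not the zero polynomial, since if $\lambda_i=\lambda_j$ then $i\lambda_i-j\lambda_j=(i-j)\lambda_i\neq 0$ (using $\lambda_i>0$, $i\neq j$), and otherwise the constant term is nonzero. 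Hence each such equation has at most one root $\varepsilon_{ij}$, and for every $\varepsilon\in(0,\varepsilon'/n)$ outside the finite set $\{\varepsilon_{ij}\mid i<j\}$ the values $\mu_1(\varepsilon),\dots,\mu_n(\varepsilon)$ are pairwise distinct. Unwinding the two conjugations produces $Q:=A^{1/2}U\tilde R U^\top A^{1/2}$, which is symmetric positive definite with $\|Q\|<\varepsilon$ once $\varepsilon'$ is chosen small enough, and $(A+Q)B$ has distinct eigenvalues.

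I do not expect a serious obstacle here; the only points requiring care are that the two conjugation steps must preserve all three constraints simultaneously---symmetry, positive definiteness, and smallness of the perturbation---which is precisely why the symmetrized factorization $A+Q=A^{1/2}(I+R)A^{1/2}$ is used rather than a blind perturbation argument, and that the affine functions $\mu_i-\mu_j$ are genuinely nonconstant, which is where $\lambda_i>0$ enters. An alternative, less hands-on route would be to note that the discriminant of the characteristic polynomial of $(A+Q)B$ is a polynomial in the entries of $Q$, observe it is not identically zero on the space of symmetric matrices (by the diagonal example above), and conclude that its nonvanishing locus is dense and meets every neighborhood of $0$ inside the open cone of positive definite matrices; the explicit construction is preferable since it also yields the norm bound directly.
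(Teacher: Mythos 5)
Your proof is correct and follows essentially the same strategy as the paper's: symmetrize the product by conjugating with a square root, diagonalize the resulting symmetric matrix, and choose the perturbation so that its effect in the diagonalized coordinates is diagonal, positive definite, small, and separates the eigenvalues. The only cosmetic differences are that you conjugate by $A^{1/2}$ (writing $A+Q=A^{1/2}(I+R)A^{1/2}$) where the paper conjugates by $B^{1/2}$, and you use a one-parameter multiplicative perturbation $\tilde R=\varepsilon\,\diag(1,\dots,n)$ together with a finite-exclusion argument, whereas the paper uses the $n$-parameter additive perturbation $Q=\sum_i\varepsilon_i v_iv_i^\top$ that shifts each eigenvalue independently and so avoids the need to exclude bad parameter values.
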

\begin{proof}
We give a simple, constructive proof. The matrix $AB$ is similar to $B^{1/2}AB^{1/2}$. The latter being symmetric, there exists an orthogonal matrix $P$ and a diagonal matrix $D$ so that $P^\top B^{1/2}AB^{1/2}P = D$, where the diagonal entries of $D$ are the eigenvalues of $AB$. Denote the $p_i \in \R^n$ the $i$th column of $P$. Then $p_i^\top p_j = \delta_{ij}$ and $P^\top p_i = e_i$. Now  set $v_i = B^{-1/2} p_i$. Then  $P^\top B^{1/2}(A+\varepsilon_i v_iv_i^\top)B^{1/2}P= D+\varepsilon_i e_ie_i\top.$ Since $D+\varepsilon_i e_ie_i^\top$ is diagonal, it contains the eigenvalues of  $P^\top B^{1/2}(A+\varepsilon_i v_iv_i^\top)B^{1/2}P$, which are the same as the eigenvalues of $(A+\varepsilon_i v_iv_i^\top)B$. It now suffices to choose the $\varepsilon_i>0$ small enough  and so that $D+\diag(\varepsilon_1,\ldots,\varepsilon_n)$ has distinct entries, and set $Q=\sum_{i=1}^n \varepsilon_i v_iv_i^\top.$	
\end{proof}

\begin{lemma}\label{lem:conteig} Let $A \in \R^{n \times n}$ be a real symmetric  matrix with eigenvalues $\lambda_1 > \lambda_2 \geq \cdots \geq \lambda_n$. Let $v_1:S_n \to \R  P^{n-1}:A \mapsto v_1(A)$ be a map assigning to $A$ the eigenspace associated with $\lambda_1$. Then  $v_1$ is differentiable around $A$.
\end{lemma}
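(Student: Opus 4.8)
The plan is to exhibit the top eigenspace as the image of a spectral projector that depends analytically on $A$ near the given matrix, and then note that passing to the image of a rank-one projector is a smooth map into projective space. First I would recall that since $A$ is symmetric with $\lambda_1 > \lambda_2$, the eigenvalue $\lambda_1$ is simple; hence there is a positively oriented circle $\Gamma \subset \C$ enclosing $\lambda_1$ and no other eigenvalue of $A$. Because eigenvalues depend continuously on the matrix entries, there is a neighborhood $\cU$ of $A$ in $S_n$ such that for every $A' \in \cU$, the curve $\Gamma$ still encloses exactly one eigenvalue of $A'$ (the perturbed $\lambda_1(A')$, which remains simple and real by symmetry) and no other. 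On this neighborhood define the Riesz spectral projector
\begin{equation}
P(A') := \frac{1}{2\pi i}\oint_\Gamma (sI - A')^{-1}\, ds.
\end{equation}
Since $s \mapsto (sI - A')^{-1}$ is, for $s \in \Gamma$ fixed, a rational (hence real-analytic) function of the entries of $A'$ on $\cU$ — the denominator $\det(sI-A')$ does not vanish on $\Gamma$ — and the integral over the compact contour $\Gamma$ preserves smoothness, the map $A' \mapsto P(A')$ is real-analytic, in particular differentiable, on $\cU$.

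Next I would identify $\operatorname{im} P(A')$ with the desired eigenspace: standard spectral calculus gives that $P(A')$ is the spectral projection onto the sum of eigenspaces for eigenvalues enclosed by $\Gamma$, which here is exactly the one-dimensional eigenspace $v_1(A')$ associated with $\lambda_1(A')$. In particular $P(A')$ has constant rank one on $\cU$. Finally, the assignment that sends a rank-one orthogonal projector to its image (a line through the origin) is a smooth map from the manifold of rank-one projectors to $\R P^{n-1}$: in a chart one may, for instance, pick a fixed vector $u$ with $P(A)u \neq 0$, shrink $\cU$ so that $P(A')u \neq 0$ throughout, and note that $A' \mapsto [\,P(A')u\,] \in \R P^{n-1}$ is a composition of the smooth map $A' \mapsto P(A')u \in \R^n \setminus \{0\}$ with the smooth quotient map $\R^n \setminus \{0\} \to \R P^{n-1}$, and that this composition agrees with $v_1$ on $\cU$. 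Composing with the differentiability of $A' \mapsto P(A')$ established above yields that $v_1$ is differentiable around $A$.

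The only genuinely delicate point is the separation hypothesis $\lambda_1 > \lambda_2$: it is what guarantees the contour $\Gamma$ can isolate $\lambda_1$ and, crucially, that the enclosed eigenspace stays one-dimensional under perturbation, so that $\operatorname{im} P(A')$ is a well-defined point of $\R P^{n-1}$ rather than a higher-dimensional subspace. Everything else is routine: the analyticity of the resolvent off the spectrum, the continuity of eigenvalues used to fix the neighborhood $\cU$, and the smoothness of the projectivization map are all standard. I would therefore spend the bulk of the written proof making the Riesz-projector construction precise and verifying the constant-rank claim, and treat the reduction to $\R P^{n-1}$ as a one-line consequence.
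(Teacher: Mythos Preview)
Your proof is correct but takes a genuinely different route from the paper's. The paper applies the implicit function theorem directly to the map
\[
V:S_n \times \R^n \times \R \to \R^{n+1},\qquad (X,u,\lambda)\mapsto \begin{pmatrix}(\lambda I - X)u\\ u^\top u -1\end{pmatrix},
\]
checking that the partial Jacobian $d_{u,\lambda}V$ is invertible at $(A,v,\lambda_1)$ precisely because $\lambda_1$ is simple, and then reading off a locally differentiable unit-eigenvector map $X\mapsto u(X)$. Your Riesz-projector argument instead builds the spectral projection by contour integration of the resolvent and then projectivizes its image. Both hinge on the same hypothesis $\lambda_1>\lambda_2$: for the paper it makes $\lambda_1 I - A$ have a one-dimensional kernel so the bordered Jacobian is nonsingular; for you it allows a contour isolating $\lambda_1$ so the projector has constant rank one. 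Your approach yields a stronger conclusion (real-analytic dependence, and it extends verbatim to non-symmetric matrices with an isolated simple eigenvalue), while the paper's argument is more elementary in that it avoids complex contour integration and spectral calculus, needing only the real implicit function theorem. Either is entirely adequate here since only $C^1$ dependence is used downstream.
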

\begin{proof}
Consider the map $$V:S_n  \times \R^{n} \times \R \to \R^{n+1}	:(X,u,\lambda) \mapsto \begin{pmatrix}(\lambda I-X)u\\
 u^\top u -1	
 \end{pmatrix}.$$
Let $A \in S_n$ be such that $\lambda_1 > \lambda_2$ and denote by $v$ a unit eigenvector spanning the eigenspace of $\lambda_1$. Then $V(A,v,\lambda_1)=0$ and  the differential of $V$ with respect to $u,\lambda$ evaluated at $(A,v,\lambda_1)$ is $$d_{u,\lambda}V(A,v,\lambda_1) = \begin{pmatrix}
 \lambda_1I-A & v \\
 2v^\top & 0 
 \end{pmatrix}.$$ Since $\lambda_1$ is a simple eigenvalue of $A$, the above map is invertible.  Hence, the implicit function theorem states that there is an open set $U \subset S_n$ containing $A$ and differentiable functions $\lambda(X),u(X)$ such that $(\lambda(X)I-X)u(X)=0$ and $\|u(X)\|^2=1$ for all $X \in U$, which proves the result.
\end{proof}

\begin{lemma}\label{lem:measure}
Let $B_\delta = B^1_\delta \cup B^2_\delta$ and $V=V^1 \cup V^2$ with $$\mu(B^1_\delta \cap B^2_\delta)=\mu(V^1\cap V^2)=\mu(B^1_\delta \cap V_2)=\mu(B^2_\delta \cap V_1)=0.$$ Assume that $$\lim_{\delta \to 0} \frac{\mu(B^1_\delta \cap V^1)}{\mu(B^1_\delta)} =\lim_{\delta \to 0} \frac{\mu(B^2_\delta \cap V^2)}{\mu(B^2_\delta)}=1.$$ Then it holds that $$\lim_{\delta \to 0} \frac{\mu(B_\delta \cap V)}{\mu(B_\delta)}=1$$
\end{lemma}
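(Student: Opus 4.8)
The plan is to prove Lemma~\ref{lem:measure} by an elementary estimate using only finite additivity and monotonicity of $\mu$, together with the hypothesis that all the pairwise ``cross'' intersections have measure zero. First I would record the two trivial consequences of the hypotheses that will be used repeatedly: since $\mu(B^1_\delta\cap B^2_\delta)=0$ we have $\mu(B_\delta)=\mu(B^1_\delta)+\mu(B^2_\delta)$, and since $\mu(V^1\cap V^2)=0$ and $\mu(B^i_\delta\cap V^j)=0$ for $i\neq j$ we have, for each $i$, $\mu(B^i_\delta\cap V)=\mu\big(B^i_\delta\cap(V^1\cup V^2)\big)=\mu(B^i_\delta\cap V^i)$ up to a null set. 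In particular $\mu(B^i_\delta\cap V)=\mu(B^i_\delta\cap V^i)$ for $i=1,2$.

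Next I would write the ratio to be estimated as a convex combination. Set $a_i(\delta):=\mu(B^i_\delta)$ and note $a_i(\delta)>0$ for $\delta$ small (otherwise the corresponding hypothesis ratio is undefined). Then
\begin{equation}
\frac{\mu(B_\delta\cap V)}{\mu(B_\delta)}=\frac{\mu(B^1_\delta\cap V)+\mu(B^2_\delta\cap V)}{a_1(\delta)+a_2(\delta)}=\frac{a_1(\delta)}{a_1(\delta)+a_2(\delta)}\cdot\frac{\mu(B^1_\delta\cap V^1)}{a_1(\delta)}+\frac{a_2(\delta)}{a_1(\delta)+a_2(\delta)}\cdot\frac{\mu(B^2_\delta\cap V^2)}{a_2(\delta)},
\end{equation}
where the first equality uses $\mu(B_\delta\cap V)=\mu(B^1_\delta\cap V)+\mu(B^2_\delta\cap V)$ (valid because $\mu(B^1_\delta\cap B^2_\delta)=0$) and the second uses the identity from the previous paragraph. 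Since the two weights $\tfrac{a_1}{a_1+a_2}$ and $\tfrac{a_2}{a_1+a_2}$ are nonnegative and sum to $1$, the right-hand side is a convex combination of two quantities each of which tends to $1$ as $\delta\to0$ by hypothesis. Hence the whole expression is squeezed between $\min$ and $\max$ of those two ratios, both of which converge to $1$, and the limit is $1$.

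The only genuine subtlety — and the step I would be most careful about — is the degenerate case where one of the $a_i(\delta)$ could vanish or where the convex-combination bookkeeping breaks down; but since $B_\delta=B^1_\delta\cup B^2_\delta$ and the hypothesis ratios $\mu(B^i_\delta)^{-1}\mu(B^i_\delta\cap V^i)$ are assumed to be well defined and to tend to $1$, each $a_i(\delta)$ is positive for all small $\delta$, so no division by zero occurs and the convex-combination argument applies verbatim. If one prefers to avoid the convex-combination language entirely, the same conclusion follows from the two-sided bound
\begin{equation}
1\geq\frac{\mu(B_\delta\cap V)}{\mu(B_\delta)}\geq\frac{\mu(B^1_\delta\cap V^1)+\mu(B^2_\delta\cap V^2)}{\mu(B^1_\delta)+\mu(B^2_\delta)}\geq\min_{i=1,2}\frac{\mu(B^i_\delta\cap V^i)}{\mu(B^i_\delta)}\xrightarrow[\delta\to0]{}1,
\end{equation}
after which the squeeze theorem finishes the argument. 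No step here is deep; the entire content is keeping track of which intersections are null and invoking finite additivity correctly.
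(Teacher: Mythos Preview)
Your proof is correct and follows essentially the same approach as the paper: both use the null-intersection hypotheses to obtain additivity $\mu(B_\delta)=\mu(B^1_\delta)+\mu(B^2_\delta)$ and $\mu(B_\delta\cap V)=\mu(B^1_\delta\cap V^1)+\mu(B^2_\delta\cap V^2)$, and then finish by an elementary limit argument. The paper phrases the final step via complements (showing $\mu(B_\delta\setminus(B_\delta\cap V))/\mu(B_\delta)\to 0$), whereas your convex-combination/squeeze formulation is slightly more direct, but the content is the same.
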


\begin{proof}
	Since $B^i_\delta \subseteq B_\delta$, we have
	\begin{align*}
	0&=1-\lim_{\delta \to 0} \frac{\mu(B^1_\delta \cap V^1)}{\mu(B^1_\delta)}\\
	&=\lim_{\delta \to 0} \frac{\mu(B^1_\delta-(B^1_\delta \cap V^1))}{\mu(B^1_\delta)}=\lim_{\delta \to 0} \frac{\mu(B^1_\delta-(B^1_\delta \cap V^1))}{\mu(B_\delta)}	
	\end{align*}
and, similarly, $\lim_{\delta \to 0} \frac{\mu(B^2_\delta-(B^2_\delta \cap V^2))}{\mu(B_\delta)}=0$. Summing the above two equalities, we get in the numerator

\begin{align*}
\mu(B^1_\delta-(B^1_\delta \cap V^1))+\mu(B^2_\delta-(B^2_\delta \cap V^2))&=
\mu\left(	(B^1_\delta-(B^1_\delta \cap V^1))\cup(B^2_\delta-(B^2_\delta \cap V^2))\right)\\
&=\mu\left(	(B^1_\delta-(B^1_\delta \cap V))\cup(B^2_\delta-(B^2_\delta \cap V))\right)\\
&=\mu\left(	B_\delta-((B^1_\delta \cap V))\cup(B^2_\delta \cap V))\right)\\
&=\mu\left(B_\delta-(B_\delta \cap V)\right).
\end{align*}
Hence, $\lim_{\delta\to 0} \frac{\mu\left(B_\delta-(B_\delta \cap V)\right)}{\mu(B_\delta)}=0,$ which concludes the proof.
\end{proof}

\bibliographystyle{amsplain}
\bibliography{morse2min.bib}

\providecommand{\bysame}{\leavevmode\hbox to3em{\hrulefill}\thinspace}
\providecommand{\MR}{\relax\ifhmode\unskip\space\fi MR }
\providecommand{\MRhref}[2]{%
  \href{http://www.ams.org/mathscinet-getitem?mr=#1}{#2}
}
\providecommand{\href}[2]{#2}
\begin{thebibliography}{10}

\bibitem{abraham1967transversal}
Ralph Abraham and Joel Robbin, \emph{Transversal mappings and flows}, WA
  Benjamin New York, 1967.

\bibitem{arnold77geometrical}
V.I. Arnol'd, \emph{Geometrical methods in the theory of ordinary differential
  equations}, Springer, 1977.

\bibitem{banyaga2013lectures}
Augustin Banyaga and David Hurtubise, \emph{Lectures on {M}orse homology},
  vol.~29, Springer Science \& Business Media, 2013.

\bibitem{MorseIR20}
Mohamed~Ali Belabbas, \emph{On implicit regularization: Morse functions and
  applications to matrix factorization}, arXiv:2001.04264 (2020).

\bibitem{bloch1992completely}
Anthony~M Bloch, Roger~W Brockett, and Tudor~S Ratiu, \emph{Completely
  integrable gradient flows}, Communications in Mathematical Physics
  \textbf{147} (1992), no.~1, 57--74.

\bibitem{brockett1997oscillatory}
Roger~W Brockett, \emph{Oscillatory descent for function minimization}, Current
  and future directions in applied mathematics, Springer, 1997, pp.~65--82.

\bibitem{implicitregmatr2017}
Suriya Gunasekar, Blake~E Woodworth, Srinadh Bhojanapalli, Behnam Neyshabur,
  and Nati Srebro, \emph{Implicit regularization in matrix factorization},
  Advances in Neural Information Processing Systems 30, 2017, pp.~6151--6159.

\bibitem{hartman1960mex}
Philip Hartman, \emph{On local homeomorphisms of {E}uclidean spaces}, Bol. Soc.
  Mat. Mexicana \textbf{5} (1960).

\bibitem{helmke2012optimization}
Uwe Helmke and John~B Moore, \emph{Optimization and dynamical systems},
  Springer Science \& Business Media, 2012.

\bibitem{hirsch2012differential}
Morris~W Hirsch, \emph{Differential topology}, vol.~33, Springer Science \&
  Business Media, 2012.

\bibitem{kurdyka2000proof}
Krzysztof Kurdyka, Tadeusz Mostowski, and Adam Parusinski, \emph{Proof of the
  gradient conjecture of {R}. {T}hom}, Annals of Mathematics (2000), 763--792.

\bibitem{lee2013smooth}
John~M Lee, \emph{Smooth manifolds}, Introduction to Smooth Manifolds,
  Springer, 2013.

\bibitem{milnor2016morse}
John Milnor, \emph{Morse theory.(am-51)}, vol.~51, Princeton university press,
  2016.

\bibitem{newhouse2017}
Sheldon~E Newhouse, \emph{On a differentiable linearization theorem of {P}hilip
  {H}artman}, Modern theory of dynamical systems: a tribute to Dmitry
  Victorovich Anosov, Contemporary Mathematics, AMS, 2017, pp.~209--262.

\bibitem{palis1982geometric}
Jacob Palis and Welington Melo, \emph{Geometric theory of dynamical systems: an
  introduction},  (1982).

\bibitem{peixoto1967approximation}
MM~Peixoto, \emph{On an approximation theorem of {K}upka and {S}male}, Journal
  of Differential Equations \textbf{3} (1967), no.~2, 214--227.

\bibitem{pincus1970letter}
Martin Pincus, \emph{A {M}onte {C}arlo method for the approximate solution of
  certain types of constrained optimization problems}, Operations research
  \textbf{18} (1970), no.~6, 1225--1228.

\bibitem{raginsky2017non}
Maxim Raginsky, Alexander Rakhlin, and Matus Telgarsky, \emph{Non-convex
  learning via stochastic gradient {L}angevin dynamics: a nonasymptotic
  analysis}, Conference on Learning Theory, 2017, pp.~1674--1703.

\bibitem{shub2013global}
Michael Shub, \emph{Global stability of dynamical systems}, Springer, 2013.

\bibitem{whitney1957elementary}
Hassler Whitney, \emph{Elementary structure of real algebraic varieties},
  Annals of Mathematics \textbf{66} (1957), no.~3, 545--556.

\end{thebibliography}
\end{document}